\theoremstyle{plain}
\newtheorem{theorem}{Theorem}
\newtheorem{lemma}{Lemma}
\theoremstyle{definition}
\newtheorem{definition}{Definition}
\newtheorem{remark}{Remark}
\numberwithin{equation}{section} 
\numberwithin{theorem}{section}
\numberwithin{lemma}{section} 
\numberwithin{remark}{section}
\newcommand{\Adj}{ \operatorname {Adj} }
\newcommand{\tr}{ \operatorname {tr}  }
\newcommand{\loc}{ {\rm loc} }
\newcommand{\R}{ \mathbb{R} }
\newcommand{\M}{ \mathbb{M} }
\newcommand{\A}{ \mathcal{A} }
\newcommand{\N}{ \mathcal{N} }
\def\?{%\advance\questioncount by 1%
?\vadjust{\vbox to 0pt{\vss\hbox{\kern\hsize\kern1em\large\bf ?!}}}}
\begin{document}

\title{
Variational problems of nonlinear elasticity % theory
in certain classes of mappings with finite distortion
\thanks{
	This work was partially supported by 
	Grant of the Russian Foundation of Basic Research 
	(Project~№~14--01--00552) 
	and  
	Grant of the Russian Federation for the State Support of Researches 
	(Agreement~№~14.B25.31.0029).
}}

\date{\empty}

\author{A.~O.~Molchanova, S.~K.~Vodop$'$yanov}

\newpage

\maketitle

% == Содержание ==
\tableofcontents

\begin{abstract}
	We study the problem of minimizing the functional 
	$$
		I(\varphi)=\int\limits_{\Omega} W(x,D\varphi)\,dx
	$$
	on a~new class of mappings.
	We relax summability conditions for admissible deformations to
	$\varphi\in W^1_n(\Omega)$
	and growth conditions on the integrand
	$W(x,F)$.
	To compensate for that,
	we impose the finite distortion condition
	and the~condition
	$\frac{|D\varphi(x)|^n}{J(x,\varphi)}  \leq M(x) \in L_{s}(\Omega)$,
	$s>n-1$,
	on the characteristic of distortion.
	On assuming that
	the~integrand
	$W(x,F)$
	is polyconvex and coercive, 
	we obtain an~existence theorem
	for the problem of minimizing the functional
	$I(\varphi)$
	on a~new family
	of admissible deformations.\\
	{\bf Keywords:} functional minimization problem,
	nonlinear elasticity,
	mapping with finite distortion,
	polyconvexity.
\end{abstract}

\section{Introduction}\label{sec:intro}
Some problems in nonlinear elasticity
(for instance, 
for hyperelastic materials)
reduce to %the problem of
minimizing the total energy functional. 
In this situation,
in contrast to the case of linear elasticity,
the integrand is almost always nonconvex,
while the functional is nonquadratic.
This renders the standard variational methods inapplicable.
Nevertheless,
for a~sufficiently large class of applied nonlinear problems,
we may replace convexity with certain weaker conditions.
In 1952 Charles Morrey suggested to consider
{\it quasiconvex} 
functions
(see~\cite{Mor1952} for more details). 
Denote by
$\M^{m\times n}$
the set of
$m\times n$
matrices.
A~bounded measurable function
$f: \M^{m\times n}\to \R \cup \{\infty\}$
is called {\it quasiconvex}~if
\begin{equation}\label{def:quasiconvex}
	\int\limits_{\Omega}f(F_0+D\zeta(x))\,dx \geq
	\int\limits_{\Omega}f(F_0)\,dx=|\Omega| \cdot f(F_0).
\end{equation}
holds for each constant $m\times n$ matrix $F_0$
for each bounded open set 
$\Omega \subset \R^n$
and for all  
$\zeta\in C_0^{\infty}(\Omega, \R^m)$.

Morrey showed that
if
$W(\cdot, \cdot)$
is quasiconvex 
and satisfies certain smoothness and growth conditions
then the problem of minimizing the total energy functional
\begin{equation}\label{def:energy_func}
	I(\varphi)=\int\limits_{\Omega}W(x,D\varphi) + \Theta(x, \varphi) \,dx 
\end{equation}
% see~\cite[p.~337]{Ball1977}
has a~solution,
where
$W(x,D\varphi)$
is the stored-energy function
and  
$\Theta(x, \varphi)$
is a~body force potential.
Conversely,
if there exists a~mapping minimizing the~functional
in the class
$C^1(\overline{\Omega})$
functions satisfying given boundary conditions,
then the quasiconvexity condition
\eqref{def:quasiconvex}
is fulfilled.
Although Morrey's results are significant for the theory,
the conditions imposed turn out too restrictive,
excluding applications to
an important class of nonlinear elasticity problems.

In 1977 John Ball developed
another successful approach to nonlinear elasticity problems
using the concept of {\it polyconvexity}
(see~\cite{Ball1977} for more details).
Even though every polyconvex function is also quasiconvex,
weaker growth conditions than in Morrey's articles
are used to prove the existence theorem.

Ball's method is to consider a~sequence
$\{\varphi_k\}$
minimizing the total energy functional~\eqref{def:energy_func}
over the set of admissible deformations
\begin{multline}\label{def:AB}
	\A_B=\{\varphi\in W^1_1(\Omega), I(\varphi) < \infty, \:
	\varphi|_\Gamma=\overline{\varphi}|_\Gamma
	\text{ a.~e.~in } \Gamma=\partial\Omega, \\
	J(x,\varphi) > 0 
	\text{ a. e.~in } \Omega\},
\end{multline}
where
$\overline{\varphi}$
are Dirichlet boundary conditions, 
on assuming that
the {\it coercivity inequality}
\begin{equation}\label{neq:coer_b}
	W(x,F)\geq \alpha (\|F\|^p+\|\Adj F\|^{q}+ (\det F)^r) + g(x)
\end{equation}
holds for almost all
$x\in \Omega$
and all
$F\in \M^n_+$,
where
$p>n-1$,
$q\geq \frac{p}{p-1}$,
$r>1$
and 
$g\in L_1(\Omega)$,
while
$\M^n_+$
stands for the set of matrices of size~%
$n \times n$
with positive determinant.
Moreover,
the stored-energy function~%
$W$
is {\it polyconvex},
that is,
there exists a~convex function
${G(x,\cdot):\M^n\times\M^n \times\R_{+} \to \R}$
such that
\begin{equation*}
	G(x, F, \Adj F, \det F)=W(x,F)
	\text{ for all } F\in \M^n_+
\end{equation*}
almost everywhere in~%
$\Omega$.
By coercivity,
the sequence
$(\varphi_k,\Adj D\varphi_k, J(\varphi_k))$
is bounded in the reflexive Banach space
$W^1_p(\Omega)\times L_{q}(\Omega)\times L_r(\Omega)$.
Hence,
there exists a~subsequence weakly converging to an~element
$(\varphi_0,\Adj D\varphi_0, J(\varphi_0))$.
For the limit
$\varphi_0$
to lie in the class
$\A_B$
of admissible deformations,
we need to impose the additional condition: 
\begin{equation} \label{cond:barrier}
	W(x,F)\rightarrow\infty
	\text{ as }
	\det F \rightarrow 0_+
\end{equation}
(see~\cite{BallCurOl1981} for more details). %~\cite[p.~377]{Ciarlet1992}
This condition is quite reasonable 
since it fits in with the principle that
{\it ``infinite stress must accompany extreme strains''.}
Another important property in this approach
is the {\it sequentially weak lower semicontinuity}
of the total energy functional,
\begin{equation*}
	I(\varphi)\leq \varliminf\limits_{k\rightarrow\infty} I(\varphi_k),
\end{equation*}
which holds
because the stored-energy function is polyconvex.
It is also worth noting that
Ball's approach admits the {\it nonuniqueness of solutions}
observed experimentally
(see~\cite{Ball1977} for more details).

Philippe Ciarlet and Indrich Ne\v{c}as studied~\cite{CiarNec1987} 
injective deformations,
imposing the {\it injectivity condition}
\begin{equation*}
	\int\limits_{\Omega} J(x,\varphi)\, dx \leq |\varphi(\Omega)|
\end{equation*}
on the admissible deformations
and requiring extra regularity
($p>n$).
Under these assumptions,
there exists an~almost everywhere injective minimizer
of the total energy functional.

In the case of problems with boundary conditions on displacement
the injectivity condition turns out superfluous
when the deformation on the boundary
coincides with a~homeomorphism
and the stored-energy function~%
$W$
tends to
infty
sufficiently fast.
Ball obtained this result in 1981~\cite{Ball1981}
(a~misprint is corrected in Exercise~7.13 of~\cite{Ciarlet1992}).
More exactly,
take a~domain
$\Omega \subset \R^3$
and a~polyconvex stored-energy function
$W:\Omega\times\M^3_+ \to \R$.
Suppose that
there exist constants
$\alpha>0$,
$p>3$,
$q > 3$,
$r>1$,
and
$m> \frac{2q}{q-3}$,
as well as a~function
$g\in L_1(\Omega)$
such that
\begin{equation}\label{neq:qoerB}
	W(x,F)\geq \alpha (\|F\|^p+\|\Adj F\|^{q}+ (\det F)^r + (\det F)^{-m}) + g(x)
\end{equation}
for almost all
$x\in \Omega$
and all
$F\in \M^n_+$.
Take a~homeomorphism
$\overline{\varphi}: \overline{\Omega} \to \overline{\Omega'}$
in
$W^1_p(\Omega)$.
Then there exists a~mapping
$\varphi: \Omega \to \Omega'$
minimizing the total energy functional~\eqref{def:energy_func} 
over the set of admissible deformations~\eqref{def:AB},
which is a~homeomorphism.

We should note that
the above conditions on the adjoint matrix
$\Adj Df \in L_q (\Omega)$,
with
$q>3$,
and
$\frac{1}{\det Df} \in L_m (\Omega)$,
where
$m> \frac{2q}{q-3}$,
in fact constrain the~inverse mapping
(since
$Df^{-1} = \frac {\Adj Df} {\det Df}$). 
In this article we discard constraints on the inverse mapping
and consider a~new class of admissible deformations:  
\begin{multline}\label{def:A}
	\A=\{\varphi \in W^1_1(\Omega) \cap FD(\Omega),\: I(\varphi)<\infty, \:
	\frac{|D\varphi(x)|^n}{J(x,\varphi)} < M(x) \in L_{s} (\Omega),\: \\
	s>n-1, \: \varphi|_\Gamma=\overline{\varphi}|_\Gamma
	\text{ a.~e.~in } \Gamma,
	\: J(x,\varphi) \geq 0 
	\text{ a.~e.~in } \Omega \},
\end{multline}
where
$FD(\Omega)$
is the class of mappings with finite distortion. 
Another significant difference
is the weakening of conditions on the stored-energy function.
The coercivity inequality becomes
\begin{equation} \label{neq:coer1}
	W(x,F)\geq \alpha (\|F\|^n+ (\det F)^r) + g(x).
\end{equation}

In some previous works
the deformation~%
$\varphi$ 
was required 
to lie in the~Sobolev class 
$W^1_p(\Omega)$
with
$p>n$
when there is a~compact embedding of~%
$W^1_p(\Omega)$
into the space of continuous functions
$C(\Omega)$,
that is,
assume at the~outset that~%
$\varphi$
is continuous.
In this article we only assume that
$\varphi \in W^1_n(\Omega)$;
consequently,
we must prove separately that
the admissible deformation is continuous. 
The second feature 
of this article
is the replacement of conditions on the ``inverse mapping''
by the summability of the distortion coefficient 
$\frac{|D\psi(x)|^n}{J(x,\psi)} < M(x) \in L_{s} (\Omega)$,
where
$s>n-1$.

The main result of this article is the following theorem.

\begin{theorem}
	Given a~polyconvex function
	$W(x,F)$
	satisfying the coercivity inequality~\eqref{neq:coer1},
	a~homeomorphism
	$\overline{\varphi}: \overline{\Omega} \to \overline{\Omega'}$,
	$\overline{\varphi} \in W^1_n(\Omega)$,
	and a~nonempty set~%
	$\A$,
	there exists at least one mapping
	$\varphi_0\in\A$
	such that
	$I(\varphi_0)=\inf\limits_{\varphi\in\A} I(\varphi)$.
	Moreover,
	$\varphi_0: \overline{\Omega} \to \overline{\Omega'}$
	is a~homeomorphism.
\end{theorem}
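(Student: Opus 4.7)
The plan is to apply the direct method of the calculus of variations, with the distortion bound $|D\varphi|^n \leq M(x) J(x,\varphi)$ playing the role that the strong integrability $p>n$ plays in Ball's original argument. Fix a minimizing sequence $\{\varphi_k\}\subset\A$ with $I(\varphi_k)\to\inf_\A I$. By the coercivity inequality~\eqref{neq:coer1}, the sequences $\{D\varphi_k\}$ and $\{J(\cdot,\varphi_k)\}$ are bounded in $L_n(\Omega)$ and $L_r(\Omega)$ respectively. Combining the distortion bound with Hadamard's inequality gives $|\Adj D\varphi_k|\leq C\,|D\varphi_k|^{n-1}\leq C\,M^{(n-1)/n}\,J(\cdot,\varphi_k)^{(n-1)/n}$, so H\"older's inequality (with $M\in L_s$, $s>n-1$, and $J\in L_r$) yields a bound on $\Adj D\varphi_k$ in $L_q(\Omega)$ for some $q>1$. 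Together with the fixed boundary values and Poincar\'e's inequality, we extract a (non-relabelled) subsequence with $\varphi_k\rightharpoonup\varphi_0$ in $W^1_n(\Omega)$, $\Adj D\varphi_k\rightharpoonup H$ in $L_q(\Omega)$, and $J(\cdot,\varphi_k)\rightharpoonup\theta$ in $L_r(\Omega)$.

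Next, I would invoke the classical weak continuity of minors (in the spirit of Ball--Reshetnyak): the uniform bounds just established are precisely what is needed to identify $H=\Adj D\varphi_0$ and $\theta=J(\cdot,\varphi_0)$ in the sense of distributions, so pointwise almost everywhere after the usual Mazur/subsequence argument. The boundary condition $\varphi_0|_\Gamma=\overline{\varphi}|_\Gamma$ survives because the trace operator is weakly continuous on $W^1_n(\Omega)$. It then remains to verify that $\varphi_0\in\A$: the nonnegativity $J(x,\varphi_0)\geq 0$ follows from the weak convergence of the Jacobians, the finite distortion condition follows from the pointwise inequality $|D\varphi_0|^n\leq M(x)J(x,\varphi_0)$, which I would establish by lower semicontinuity of the left-hand side and a Mazur-type convex combination argument applied to the family $(D\varphi_k, J(\cdot,\varphi_k))$, and the membership $I(\varphi_0)<\infty$ then follows from the next step.

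The polyconvexity of $W(x,\cdot)$, together with the identified weak limits of $(D\varphi_k,\Adj D\varphi_k,J(\cdot,\varphi_k))$, delivers the sequentially weak lower semicontinuity $I(\varphi_0)\leq\varliminf I(\varphi_k)=\inf_\A I$ through the standard convex integrand argument of Ball. Combined with $\varphi_0\in\A$, this shows that $\varphi_0$ is a minimizer.

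The part I expect to be the main obstacle is the final assertion that $\varphi_0$ is a homeomorphism, since under the weaker assumption $\varphi\in W^1_n(\Omega)$ we lack the easy embedding into $C(\overline{\Omega})$ that Ball exploited in~\cite{Ball1981}. Here I would use the theory of mappings with finite distortion whose distortion coefficient lies in $L_s$ with $s>n-1$: such mappings have a continuous representative, are open and discrete, and satisfy Lusin's $N$ and $N^{-1}$ conditions. Combining these regularity properties with the fact that the boundary datum $\overline{\varphi}$ is a homeomorphism onto $\overline{\Omega'}$ and with a topological degree argument (the degree of $\varphi_0$ equals that of $\overline{\varphi}$ on any component of $\Omega'$), one concludes that $\varphi_0$ is injective and maps $\overline{\Omega}$ homeomorphically onto $\overline{\Omega'}$. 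Establishing continuity of the admissible deformation and then pushing the topological properties through the weak limit is where the bulk of the technical work would reside.
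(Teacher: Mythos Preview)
Your overall scheme---direct method, weak compactness from coercivity, weak continuity of minors, polyconvexity for lower semicontinuity, and finite-distortion theory for the homeomorphism conclusion---matches the paper's. A few points of comparison are worth recording.

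First, your bound on $\Adj D\varphi_k$ via the distortion inequality is unnecessarily roundabout: since $|\Adj F|\le C|F|^{n-1}$, the $L_n$-bound on $D\varphi_k$ already gives $\Adj D\varphi_k$ bounded in $L_{n/(n-1)}$, which is what the paper uses.

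Second, your route to the pointwise inequality $|D\varphi_0|^n\le M(x)J(x,\varphi_0)$ via Mazur on the pair $(D\varphi_k,J(\cdot,\varphi_k))$ does work (convexity of $F\mapsto|F|^n$ and linearity in $J$ make the inequality survive convex combinations, then pass to an a.e.\ limit), and it is shorter than the paper's argument, which instead integrates against test functions, applies H\"older to get
\[
\int|D\varphi_k|^{ns/(s+1)}\theta \;\le\; \Big(\int M^s\theta\Big)^{1/(s+1)}\Big(\int J(\cdot,\varphi_k)\theta\Big)^{s/(s+1)},
\]
passes to the limit, and then localizes via Lebesgue differentiation.

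The most substantial divergence is in how the homeomorphism property and membership in $\A$ are established. The paper does \emph{not} go straight from the distortion bound to Manfredi--Villamor plus a degree argument. Instead it first proves that every $\varphi_k\in\A$ is already a homeomorphism, then shows the inverses $\psi_k=\varphi_k^{-1}$ form an equicontinuous family (via the composition-operator bounds of Theorems~\ref{thm:Vod5}--\ref{thm:Vod6} and a capacitary metric argument near $\partial\Omega'$), extracts a uniform limit $\psi_0$, and uses $\psi_0\circ\varphi_0=\mathrm{id}$ together with a delicate $\N^{-1}$-type lemma to get a.e.\ injectivity of $\varphi_0$ and $J(\cdot,\varphi_0)>0$. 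Only then does it invoke the bounded composition operator for $\varphi_0$, recover $K_{\varphi_0,n}\in L_{ns}$, apply Manfredi--Villamor, and run the degree argument. Your plan collapses this entire detour by establishing the pointwise distortion bound first; once you have it, the paper's own Lemma~\ref{lem:homeo} applies directly to $\varphi_0$ and gives the homeomorphism. So your route is genuinely more economical here---but be aware that the paper's extra machinery (inverse maps, composition operators) is presented as part of the contribution, not merely as scaffolding.

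Finally, the point you correctly flag as ``the bulk of the technical work''---continuity of $\varphi_0$ up to $\partial\Omega$ so that the degree argument is available---is handled in the paper by a reflection/symmetrization across the Lipschitz boundary charts, which lets Manfredi--Villamor be applied in a full neighbourhood of each boundary point. You will need this device (or an equivalent) regardless of which route you take.
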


It is not difficult to verify that
for sufficiently large values of the exponents
$q>n(n-1)s$
and
$m > \frac{sq(n-1)^2}{q-sn(n-1)}$
the summability of the distortion coefficient
$\frac{|D\psi(x)|^n}{J(x,\psi)} \in L_{s} (\Omega)$
of the mapping
$\psi \in \A$
follows from %the inequality
\eqref{neq:qoerB} and Corollary~6 of~\cite{Vod2012}.
We should note also that
in this article the property of mapping to be sense  preserving
follows from the property that
the required deformation is a~mapping with bounded
$(n,q)$-distortion
\cite{BayVod2015}.

Naturally,
the proofs of our main results %of this article
differ substantially from Ball's methods in~\cite{Ball1977, Ball1981}
and depend crucially on the results and methods of~\cite{Vod2012}.

In the first section %of the article
we present the concept of polyconvexity of functions.
The second section contains auxiliary facts.
The third section is devoted to the main result:
the existence theorem and its proof.
In the fourth section we give two examples.
In the first example  
we consider a~stored-energy function
$W(F)$
for which both minimization problems
(in the classes
$\A_B$
and~%
$\A$)
have solutions.
The second example discusses a~function
$W(F)$
violating the coercivity 
\eqref{neq:coer_b} 
and asymptotic condition 
\eqref{cond:barrier};
nevertheless,
there exists a~solution to the minimization problem in~% the class~%
$\A$.

The results of this article 
were announced in the note~\cite{VodMol}
together with a~sketch of the proof of the main result.

\section{The concept of polyconvexity}\label{sec:concept_poly}
For a~large class of physical problems 
it may be assumed that
the stored-energy function is {\it polyconvex}.
\begin{definition}[\cite{Ball1977}]%\cite[Def 4.2]{Ball1977},~\cite[p.174]{Ciarlet1992}
	A~function
	$W:\mathbb{F}\to \R$
	defined on an~arbitrary subset
	$\mathbb{F}\subset \M^3$
	is {\it polyconvex}
	if there exists a~convex function
	$G:\mathbb{U}\to \R$,
	where
	\begin{equation*}
		\mathbb{U}=\{(F, \Adj{F}, \det F)\in \M^3\times \M^3
		\times \R,\: F\in \mathbb{F} \},
	\end{equation*}
	such that
	\begin{equation*}
		G(F, \Adj F, \det F)=W(F)
		\text{ for all } F\in \mathbb{F}.
	\end{equation*}
	Here
	$\Adj F$
	stands for the adjugate matrix,
	that is,
	the transpose of cofactor matrix.		 
\end{definition}

As examples of polyconvex
but not convex functions,
consider
\begin{equation*}
	W(F)= \det F
\end{equation*}
and
\begin{equation*}
	W(F)=\tr \Adj F^T F = \|\Adj F^T F\|^2.
\end{equation*}

\subsection{Polyconvex stored-energy functions}\label{sec:poly}
\label{sec:2}

Consider the stored-energy function %of the form
\begin{equation*}
	W(F)=\sum\limits_{i=1}\limits^{M}a_i {\rm tr} \: (F^T F)^{\frac{\gamma_i}{2}}
	+
	\sum\limits_{j=1}\limits^{N}b_j \Adj (F^T F)^{\frac{\delta_j}{2}} + \Gamma(\det F)
\end{equation*}
with
$a_i>0$,
$b_j>0$,
$\gamma_i\geq 1$,
$\delta_j \geq 1$,
and a~convex function
$\Gamma:(0,\infty)\to \R$.
If
\begin{equation*}
	\lim\limits_{\delta \rightarrow 0_+} \Gamma(\delta)=\infty
\end{equation*}
then the hyperelastic material is called an~{\it Ogden material}
\cite{Ogden1972}.
These materials have polyconvex stored-energy function
and satisfy the growth conditions of~%
Ball's existence theorem.

Ogden materials are interesting not only in theory,
but also in practice.
Moreover
(see~\cite{Ciarlet1992} for more details),
for a~hyperelastic material with experimentally known Lam\'e coefficients
it can be constructed a~stored-energy function of an~Ogden material.

\subsection{Non-polyconvex stored-energy functions}\label{sec:nonpoly}

Saint-Venant--Kirhhoff materials
are well-known examples of hyperelastic materials.
Its stored-energy function is %of the form
\begin{equation*}
	W(F)=\frac{\lambda}{2}(\tr E)^2 + \mu \tr E^2,
\end{equation*}
where
$\lambda$~%
and~%
$\mu$
are Lam\'e coefficients
and
\begin{equation*}
	I+2E=F^T F.
\end{equation*}
This function is a~particular case of the function %of the form
\begin{equation*}%\label{nepoly}
	W(F)=a_1\,\tr F^T F + a_2\,\tr (F^T F)^2 + b \,\tr \Adj F^T F + c
\end{equation*}
with
$a_1 < 0$,
$a_2 > 0$,
and
$b > 0$.
Although this function %(\ref{nepoly})
resembles the~function of an Ogden material
and satisfies the coercivity inequality,
it is not polyconvex 
\cite[Theorem 4.10]{Ciarlet1992}.

\section{Preliminaries}\label{sec:preliminary}

In this section we present some important concepts and statements 
necessary to proceed.
On a~domain
$\Omega\subset\R^n$
we define in a~standard way
(see~\cite{Maz2011} for instance) 
the spaces
$C_0^{\infty}(\Omega)$
of smooth 
functions with compact support,
the~Lebesgue spaces
$L_p(\Omega)$
and
$L_{p, \loc}(\Omega)$
of integrable functions,
and the~Sobolev spaces
$W^1_p(\Omega)$
and
$W^1_{p, \loc}(\Omega)$.

For  working with a~geometry of domains we need the following 
definition.

\begin{definition}\label{quasiisom}
	A homeomorphism  
	$\varphi:\Omega\to \Omega'$ 
	of two open sets 
	$\Omega$, 
	$\Omega'\subset\R^n$ 
	is called a{\it~quasi-isometric mapping} 
	if the following inequalities 
	\begin{equation*}%\label{qi1}
		\varlimsup\limits_{y\to x}\frac{|\varphi(y)-\varphi(x)|}{|y-x|}\leq M \quad \text{ and } \quad 
		\varlimsup\limits_{y\to z}\frac{|\varphi^{-1}(y)-\varphi^{-1}(z)|}{|y-z|}\leq M
	\end{equation*}
	hold for all 
	$x\in \Omega$ 
	and 
	$z\in \Omega'$ 
	where  
	$M$ 
	is some constant independent of the~choice of points 
	$x\in \Omega$ 
	and 
	$z\in \Omega'$.
\end{definition}

Recall the following definition.

\begin{definition}
	A~domain
	$\Omega\subset\R^n$
	is called a~domain with {\it locally quasi-isometric  boundary}
	whenever for every point
	$x\in\partial \Omega$
	there are a~neighborhood
	$U_x \subset\R^n$
	and a~quasi-isometric mapping
	$\nu_x: U_x \to B(0, r_x) \subset \R^n$,
	where the~number
	$r_x>0$
	depends on~%the neighborhood
	$U_x$,
	such that
	$\nu (U_x \cap \partial \Omega) \subset \{y \in B(0, r_x) \mid y_n = 0\}$.
\end{definition}

\begin{remark}\label{equivbound}
	In some papers 
	it is used 
	a bi-Lipschitz mapping
	$\varphi$ 
	instead of  
	quasi-isometric mapping in this definition.
	It is evident that 
	the bi-Lipschitz mapping	 
	is also  quasi-isometric one.
	The inverse implication is not valid but it is valid the following assertion:%
	\textit{ every quasi-isometric
	mapping is locally bi-Lipschitz one} 
	(see a proof below). 
	Hence  $\Omega$  is a  domain 
	with locally Lipschitz boundary
	if and only if it is a domain 
	with  quasi-isometric boundary.
\end{remark}

\begin{proof}
	For proving this statement fix a quasi-isometric mapping 
	$\varphi: \Omega \to \Omega'$. 
	We have to verify for any fixed  ball 
	$B\Subset \Omega$ 
	the inequality 
	$$
	d_{\varphi(B)}(\varphi(x),\varphi(y))\leq L|\varphi(x)-\varphi(y)|
	$$ 
	holds for all points 
	$x$,
	$y\in B$ 
	with some constant 
	$L$ 
	depending on the choice of 
	$B$ 
	only (here 
	$d_{\varphi(B)}(u,v)$ 
	is the intrinsic metric in the domain 
	$\varphi(B)$ 
	defined as the infimum over the lengths of all rectifiable curves in 
	$\varphi(B)$ 
	with endpoints 
	$u$ 
	and 
	$v$)\footnote{It is well-known that a mapping is quasi-isometric iff the lengths of a rectifiable curve in the domain and of its image 
	are comparable. The last property is equivalent to the following one:  given mapping $\varphi: \Omega \to \Omega'$ is quasi-isometric iff  $ L^{-1}  d_B(x,y)\leq d_{\varphi(B)}(\varphi(x),\varphi(y))\leq L d_B(x,y)$ for all $x,y\in B$.}. 
	Take an arbitrary function 
	$g\in W^1_\infty(\varphi(B))$. 
	Then 
	$\varphi^*(g) = g\circ \varphi\in W^1_\infty(B)$ 
	and by Whitney type extension theorem 
	(see for instance \cite{Vod1988,Vod1989}) 
	there is a bounded  extension operator 
	$\operatorname{ext}_B: W^1_\infty(B)\to W^1_\infty(\R^n)$. 
	Multiply 
	$\operatorname{ext}_B(\varphi^*(g))$ 
	by a cut-of-function 
	$\eta\in C_0^\infty(\Omega)$ 
	such that
	$\eta(x) =1$ 
	for all points 
	$x\in B$. 
	Then the product 
	$\eta\cdot\operatorname{ext}_B(\varphi^*(g))$ 
	belongs to 
	$W^1_\infty(\Omega)$, 
	equals~0 near the boundary 
	$\partial \Omega$ 
	and its norm in 
	$W^1_\infty(\Omega)$ 
	is controlled by the norm 
	$\|g\mid W^1_\infty(\varphi(B))\|$. 
	  
	It is clear that 
	$\varphi^{-1}{}^*(\eta\cdot\operatorname{ext}_B(\varphi^*(g)))$ 
	belongs to 
	$W^1_\infty(\Omega')$, 
	equals~0 near the boundary 
	$\partial \Omega'$ 
	and its norm in 
	$W^1_\infty(\Omega')$ 
	is controlled by the norm 
	$\|g\mid W^1_\infty(\varphi(B))\|$.  
	Extending 
	$\varphi^{-1}{}^*(\eta\cdot\operatorname{ext}_B(\varphi^*(g)))$ 
	by~0 outside~%
	$\Omega'$ 
	we obtain a~bounded extension operator
	$$
		\operatorname{ext}_{\varphi(B)}: W^1_\infty(\varphi(B))\to 
		W^1_\infty(\R^n).
	$$
	It is well-known (see for example \cite{Vod1988,Vod1989}) 
	that a necessary and sufficient condition for existence of such 
	operator is an~equivalence of the interior metric in  
	$\varphi(B)$ 
	to the Euclidean one:
	the inequality  
	$$
		d_{\varphi(B)}(u,v)\leq L |u-v|
	$$
	holds for all points 
	$u$,
	$v\in \varphi(B)$ 
	with some constant 
	$L$.
\end{proof}

Taking into account Remark~\ref{equivbound}
we can consider also a domain with  
quasi-isometric boundary instead of domain with Lipschitz boundary
in  the statements  formulated below.

\begin{theorem}[Rellich--Kondrachov theorem,
see~\cite{Adams1975} for instance]  
\label{thm:embedding}
	Consider a~bounded domain~%
	$\Omega$
	in
	$\R^n$
	and
	$1 \leq p<\infty$.
	If~%
	$\Omega$
	satisfies the cone condition
	then the following embeddings are compact: 
	\begin{enumerate}
	\item
		$W^1_p(\Omega)\Subset L_q(\Omega)$
		for
		$1\leq q<p^*=\frac{np}{n-p}$
		with
		$p<n$;

	\item
		$W^1_p(\Omega)\Subset L_q(\Omega)$
		for
		$1\leq q<\infty$
		with
		$p=n$.

	\item
		If~$\Omega$
		has a~locally Lipschitz boundary
		$\partial \Omega$
		then for
		$p>n$
		the embedding
		$W^1_p(\Omega)\Subset C(\overline{\Omega})$
		is compact.
	\end{enumerate}
\end{theorem}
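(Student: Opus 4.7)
The plan is to prove the three cases using two different compactness principles: Arzelà--Ascoli for (3), and the Fréchet--Kolmogorov--Riesz translation-equicontinuity criterion for (1) and (2), in each case combined with the continuous Sobolev embeddings that are available on cone-condition domains via an integral representation formula.

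For case (3), I would first establish Morrey's inequality to obtain the continuous embedding $W^1_p(\Omega) \hookrightarrow C^{0,\alpha}(\overline{\Omega})$ with Hölder exponent $\alpha = 1 - n/p > 0$; the locally Lipschitz boundary enters through a bounded linear extension operator $W^1_p(\Omega) \to W^1_p(\R^n)$, which lets the Morrey estimate be applied on balls inside $\R^n$. A bounded sequence $\{u_k\}$ in $W^1_p(\Omega)$ is then uniformly Hölder continuous and uniformly bounded on the compact set $\overline{\Omega}$, so Arzelà--Ascoli extracts a subsequence converging uniformly in $C(\overline{\Omega})$.

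For cases (1) and (2), I would apply the Fréchet--Kolmogorov--Riesz criterion: a bounded subset of $L_q(\Omega)$ on a bounded domain is precompact provided translations act equicontinuously. The key estimate is
\begin{equation*}
	\|u(\cdot + h) - u(\cdot)\|_{L_p(\omega)} \leq |h|\cdot\|\nabla u\|_{L_p(\Omega)}
\end{equation*}
for smooth $u$ and subdomains $\omega \Subset \Omega$ with $|h|$ small. Using a finite covering by pieces compatible with the cone condition together with density of smooth functions, this upgrades to translation-equicontinuity in $L_p(\Omega)$, uniform on bounded sets of $W^1_p(\Omega)$. Hence a bounded sequence $\{u_k\}$ admits a subsequence convergent in $L_p(\Omega)$, and a fortiori in $L_1(\Omega)$ by Hölder's inequality applied on the bounded domain.

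To reach the full range of $q$, I would interpolate against the bounded Sobolev embedding, which under the cone condition gives $W^1_p(\Omega) \hookrightarrow L_{p^*}(\Omega)$ for $p<n$ and $W^1_p(\Omega) \hookrightarrow L_r(\Omega)$ for every finite $r$ when $p=n$. For a target $q$ with $1\leq q<p^*$ (resp. $1\leq q<\infty$) pick $r$ with $q<r\leq p^*$ (resp. $r$ large) and write $\frac{1}{q} = \frac{\theta}{1} + \frac{1-\theta}{r}$ with $\theta\in(0,1]$; Hölder's inequality then yields
\begin{equation*}
	\|u_{k_j}-u_{k_l}\|_{L_q(\Omega)} \leq \|u_{k_j}-u_{k_l}\|_{L_1(\Omega)}^{\theta}\,\|u_{k_j}-u_{k_l}\|_{L_r(\Omega)}^{1-\theta},
\end{equation*}
where the second factor is uniformly bounded and the first vanishes along the extracted subsequence. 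The principal technical obstacle is performing the translation estimate and the Sobolev embedding without a global extension operator, since the cone condition does not in general provide one; this is circumvented by relying on Sobolev's integral representation formula, whose validity rests precisely on the cone condition and which simultaneously yields the embedding into $L_{p^*}$ and the required control on translations.
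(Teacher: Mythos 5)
The paper does not actually prove this statement: it is quoted as a classical preliminary with a citation to Adams, so there is no internal argument to compare yours against. Your sketch is the standard textbook proof and is essentially sound: Morrey's inequality plus a bounded extension operator and Arzel\`a--Ascoli for part (3) (legitimate, since part (3) assumes a locally Lipschitz boundary, which supplies the extension), and the Fr\'echet--Kolmogorov--Riesz translation criterion combined with the continuous embedding $W^1_p(\Omega)\hookrightarrow L_{p^*}(\Omega)$ under the cone condition, followed by interpolation, for parts (1) and (2). The one step you should make explicit is the passage from the interior translation estimate to precompactness in $L_p(\Omega)$: the bound $\|u(\cdot+h)-u\|_{L_p(\omega)}\leq |h|\,\|\nabla u\|_{L_p(\Omega)}$ is available only for $\omega\Subset\Omega$ and $|h|<\operatorname{dist}(\omega,\partial\Omega)$, and the cone condition gives no global extension operator, so the phrase ``finite covering plus density'' does not by itself yield translation equicontinuity of the zero extensions on all of $\R^n$. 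The missing ingredient is uniform smallness of $\|u\|_{L_p(\Omega\setminus\omega)}$ over the bounded set as $\omega\uparrow\Omega$, which follows from H\"older's inequality, the uniform $L_{p^*}$ bound (any finite exponent $r>p$ when $p=n$), and $|\Omega\setminus\omega|\to 0$; you implicitly have this through the integral-representation remark, but it should be stated as the boundary-strip estimate that makes the Kolmogorov--Riesz criterion applicable. With that point spelled out, your interpolation argument closes the proof for all $q<p^*$ in case (1) and all finite $q$ in case (2).
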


\begin{theorem}[Properties of the trace operator~\cite{Maz2011}]
\label{thm:trace} 
	Consider a~bounded domain~%
	$\Omega$
	with locally Lipschitz boundary
	$\partial \Omega$
	endowed with the
	$(n-1)$-dimensional
	Hausdorff measure
	$\mathcal{H}^{n-1}$
	and
	$1\leq p < \infty$.
	There exists a~bounded linear operator
	$\tr$
	such that
	$\tr f = f$
	on
	$\partial \Omega$
	for all
	$f \in W^1_p(\Omega) \cap C(\overline{\Omega})$, with properties:

	\begin{enumerate}
	\item if
		$1\leq p<n$
		then
		$\tr:W^l_p(\Omega) \to L_q(\partial \Omega)$
		for
		$1<q<p^*=\frac{(n-1)p}{n-p}$
		and furthermore,
		for
		$1< p<n$
		the operator
		$\tr$
		is compact;

	\item if
		$p=n$
		then
		$\tr:W^l_p(\Omega) \to L_q(\partial \Omega)$
		for
		$1<q<\infty$
		and furthermore,
		the operator
		$\tr$
		is compact;	

	\item if
		$n<p$
		then
		$\tr:W^l_p(\Omega) \to C(\partial \Omega)$.
		and furthermore,
		the operator
		$\tr$
		is compact.
	\end{enumerate}
\end{theorem}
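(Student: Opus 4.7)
The plan is to prove the theorem by the standard route of flattening the boundary with bi-Lipschitz charts, establishing the trace estimate for smooth functions on a half-ball via the fundamental theorem of calculus combined with H\"older and Sobolev embeddings, and then extending by density, with a separate interpolation argument for the compactness claims.

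First I would choose a finite partition of unity $\{\chi_i\}$ subordinate to a cover of $\partial\Omega$ by neighborhoods $U_i$ on which the locally Lipschitz boundary is straightened bi-Lipschitz-ly onto a flat piece of $\{y_n=0\}$. Because bi-Lipschitz maps preserve $W^1_p$-norms and the $(n-1)$-Hausdorff measure on the boundary up to multiplicative constants, it suffices to bound the trace of a function $f\in C^1(\overline{B^+})$ with support in a smaller closed half-ball $\overline{B^+(0,r)}$. A standard mollification argument gives density of $C^1(\overline{\Omega})\cap W^1_p(\Omega)$ in $W^1_p(\Omega)$ for locally Lipschitz $\Omega$, so the bounded extension from smooth functions to all of $W^1_p(\Omega)$ is automatic once the trace inequality is proved for such $f$.

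Next I would write, for such $f$ and $q\geq 1$,
$$|f(y',0)|^q = -\int_0^{r}\frac{\partial}{\partial t}|f(y',t)|^q\,dt,$$
where the boundary term at $t=r$ vanishes by the support assumption, so that $|f(y',0)|^q \leq q\int_0^r |f|^{q-1}|\nabla f|\,dt$. Integration in $y'\in\R^{n-1}$ and H\"older's inequality with exponents $p$ and $p'=p/(p-1)$ yield
$$\|f\|_{L_q(\partial B^+)}^q \leq q\,\|f\|_{L_{(q-1)p'}(B^+)}^{q-1}\,\|\nabla f\|_{L_p(B^+)}.$$
To absorb the first factor I would invoke Theorem~\ref{thm:embedding}: for $1\leq p<n$ the embedding $W^1_p\hookrightarrow L_{p^*}$ with $p^*=np/(n-p)$ requires $(q-1)p'\leq p^*$, which rearranges exactly to $q\leq (n-1)p/(n-p)$; for $p=n$ any finite exponent is admissible since $W^1_n\hookrightarrow L_r$ for every finite $r$; and for $p>n$ the embedding $W^1_p\hookrightarrow C(\overline{B^+})$ lets us define the trace as the literal boundary restriction of a continuous function, bounded in $C(\partial\Omega)$ by the Sobolev norm.

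The main obstacle will be the compactness assertion. For the subcritical cases (1) and (2), I would fix $q$ strictly below the critical exponent and pick an intermediate $q_1$ with $q<q_1<$ critical. For a bounded sequence $\{f_k\}\subset W^1_p(\Omega)$, Theorem~\ref{thm:embedding} extracts a subsequence (still denoted $\{f_k\}$) that is Cauchy in $L_1(\Omega)$; pushing this Cauchyness down to $L_1(\partial\Omega)$ via the trace inequality applied with exponent $1$ together with a tubular-neighborhood estimate, and then interpolating
$$\|f_k-f_j\|_{L_q(\partial\Omega)} \leq \|f_k-f_j\|_{L_1(\partial\Omega)}^{\theta}\,\|f_k-f_j\|_{L_{q_1}(\partial\Omega)}^{1-\theta}$$
against the uniform $L_{q_1}$-trace bound from the previous step, produces a Cauchy subsequence in $L_q(\partial\Omega)$. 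For case (3), compactness into $C(\partial\Omega)$ follows from the Arzela--Ascoli theorem, since Morrey's theorem supplies uniform H\"older continuity of $f_k$ on $\overline{\Omega}$ with exponent $1-n/p$, and hence equicontinuity on $\partial\Omega$.
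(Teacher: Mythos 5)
The paper does not prove this statement at all: it is imported verbatim as a known result from Maz$'$ya's book \cite{Maz2011}, so there is no internal proof to compare yours against. Judged on its own, your argument is the standard textbook proof (bi-Lipschitz flattening and partition of unity, fundamental theorem of calculus in the normal direction plus H\"older and the Sobolev embedding of Theorem~\ref{thm:embedding} for boundedness, density of smooth functions up to the boundary on a Lipschitz domain, interpolation for compactness in the subcritical cases and Morrey/Arzel\`a--Ascoli for $p>n$), and the exponent bookkeeping is right: $(q-1)p'\leq p^*$ is exactly $q\leq\frac{(n-1)p}{n-p}$. The one step you should not leave as a parenthetical is the passage from Cauchyness in $L_1(\Omega)$ to Cauchyness in $L_1(\partial\Omega)$: the trace inequality with $q=1$ bounds the boundary $L_1$-norm of $f_k-f_j$ by $\|\nabla(f_k-f_j)\|_{L_1}$, which does not become small, so the ``tubular-neighborhood estimate'' you mention is in fact the crux --- one needs the collar bound $\int_{\partial\Omega}|g|\,d\mathcal{H}^{n-1}\leq C\bigl(\varepsilon^{-1}\int_{\Omega_\varepsilon}|g|\,dx+\int_{\Omega_\varepsilon}|\nabla g|\,dx\bigr)$, where the gradient term over the collar is made uniformly small by H\"older together with $|\Omega_\varepsilon|\to 0$ (this uses $p>1$, consistent with compactness being asserted only for $1<p$), and only then does the $\varepsilon^{-1}$-weighted interior term get killed by $L_1(\Omega)$-convergence. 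With that step written out, your interpolation argument closes the subcritical cases correctly, and the $p>n$ case via uniform H\"older continuity is fine.
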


We also need the following theorem.
\begin{theorem}[Corollary to Poincar\'e inequality,
	see~\cite{Ciarlet1992} for instance] 
	\label{thm:Poin} 
	Given a~connected bounded domain
	$\Omega \subset\R^n$
	with Lipschitz boundary 
	$\Gamma=\partial \Omega$,
	a~measurable subset
	$\Gamma_0$
	of~%
	$\Gamma$
	with
	$|\Gamma_0|> 0$,
	and
	$1\leq p <\infty$,
	there exists a~constant
	$C_1$
	such that
	\begin{equation*}
		\int\limits_{\Omega} |f(x)|^p\, dx \leq C_1 \bigg(
		\int\limits_{\Omega} |Df(x)|^p\, dx +
		\bigg|\int\limits_{\Gamma_0} f(x)\, ds \bigg|^p \bigg).
	\end{equation*}
	for all
	$f\in W^1_p(\Omega)$.
\end{theorem}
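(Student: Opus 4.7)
The plan is to argue by contradiction using the compactness of the embedding $W^1_p(\Omega) \Subset L_p(\Omega)$ from Theorem~\ref{thm:embedding} together with continuity of the trace operator from Theorem~\ref{thm:trace}. This is the standard Ne\v{c}as-type compactness argument for Poincaré-Friedrichs inequalities with boundary seminorm.

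First I would assume the inequality fails. Then for each $k \in \mathbb{N}$ there exists $f_k \in W^1_p(\Omega)$ such that
\begin{equation*}
    \int_{\Omega} |f_k|^p\,dx > k \Big( \int_{\Omega} |Df_k|^p\,dx + \Big|\int_{\Gamma_0} f_k\,ds\Big|^p \Big).
\end{equation*}
After normalizing by $g_k = f_k/\|f_k\|_{L_p(\Omega)}$, I obtain a sequence with $\|g_k\|_{L_p(\Omega)}=1$, $\|Dg_k\|_{L_p(\Omega)} \to 0$, and $\big|\int_{\Gamma_0} g_k\,ds\big| \to 0$. In particular $\{g_k\}$ is bounded in $W^1_p(\Omega)$.

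Next I would extract a subsequence. Since $\Omega$ has Lipschitz boundary it satisfies the cone condition, so Theorem~\ref{thm:embedding} gives a subsequence (still denoted $g_k$) converging strongly in $L_p(\Omega)$ to some $g$; simultaneously $Dg_k \to 0$ strongly in $L_p(\Omega)$, so $g_k \to g$ strongly in $W^1_p(\Omega)$ with $Dg = 0$ almost everywhere. Since $\Omega$ is connected, this forces $g$ to be a constant $c$, and the normalization gives $|c|^p|\Omega|=1$, hence $c \neq 0$.

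Finally I would use the trace. By Theorem~\ref{thm:trace} the trace operator is continuous from $W^1_p(\Omega)$ into $L_1(\partial\Omega)$ (for $1\le p<n$ it maps into $L_q(\partial\Omega)$ for some $q>1$, for $p\ge n$ even better), so $\tr g_k \to \tr g = c$ in $L_1(\partial\Omega)$ and in particular $\int_{\Gamma_0} g_k\,ds \to c\,|\Gamma_0|$. Passing to the limit in the bound $\big|\int_{\Gamma_0} g_k\,ds\big| \to 0$ yields $c\,|\Gamma_0|=0$, contradicting $c\neq 0$ and $|\Gamma_0|>0$.

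The only delicate step is justifying that $Dg=0$ together with connectedness implies $g$ is a constant in the Sobolev sense; this is a classical fact but deserves to be cited. The remaining obstacle is ensuring the trace statement is applicable on the subset $\Gamma_0 \subset \Gamma$ of positive $\mathcal{H}^{n-1}$-measure, which follows because $\tr$ lands in $L_1(\partial\Omega)$ and the indicator of $\Gamma_0$ pairs with it in duality; this step is routine once the Lipschitz boundary assumption is in force.
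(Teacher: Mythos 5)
Your argument is correct and is essentially the standard compactness proof: the paper does not prove Theorem~\ref{thm:Poin} at all but cites it to \cite{Ciarlet1992}, where exactly this contradiction argument (normalize, extract an $L_p$-convergent subsequence via Rellich--Kondrachov, conclude the limit is a nonzero constant, and contradict the vanishing boundary integral via continuity of the trace) is used. The only point worth tightening is the case $p=1$, where the paper's Theorem~\ref{thm:trace} as stated gives an empty range of exponents, so you should invoke the classical Gagliardo trace bound $\tr\colon W^1_1(\Omega)\to L_1(\partial\Omega)$ directly; with that citation your proof is complete.
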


\begin{lemma}[\cite{Resh1982, BojIwa1983}]\label{lem:free_div}
	Take an~open connected set
	$\Omega\subset \R^n$
	with
	$n\geq 2$
	and a~mapping
	$f: \Omega \to \R^n$
	with
	$f\in W^1_p(\Omega)$,
	for
	$p>n-1$.
	Then the columns of the matrix
	$\Adj Df (x)$
	are divergence-free vector fields,
	that is,
	$\sum\limits_{j=1}\limits^{n}
 	\frac{\partial}{\partial x_j} A_{jk} = 0$
	in the sense of distributions 
	for all
	$k=1,\dots, n$.
\end{lemma}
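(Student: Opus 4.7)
The plan is to reduce to the smooth case via mollification. Let $\varphi \in C_0^\infty(\Omega)$ be a test function and let $k \in \{1,\dots,n\}$; I want to show
\[
\int_\Omega \sum_{j=1}^n A_{jk}(x)\,\frac{\partial \varphi}{\partial x_j}(x)\,dx = 0,
\]
where $A_{jk}$ denotes the $(j,k)$-entry of $\Adj Df$. Fix an open set $\Omega' \Subset \Omega$ containing $\operatorname{supp}\varphi$, and set $f_\varepsilon = f * \rho_\varepsilon$ with $\rho_\varepsilon$ a standard mollifier; for $\varepsilon$ small, $f_\varepsilon \in C^\infty(\Omega')$ and $Df_\varepsilon \to Df$ strongly in $L_p(\Omega')$.

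The first main step is the smooth identity. For $f_\varepsilon \in C^\infty(\Omega')$, each entry $(\Adj Df_\varepsilon)_{jk}$ is, up to sign, the determinant of the $(n-1)\times(n-1)$ minor obtained by removing the $k$-th row and $j$-th column of $Df_\varepsilon$. Applying $\partial/\partial x_j$ and summing over $j$, one gets a sum of determinants each of which has two identical rows (this is the classical Piola identity, and corresponds to the fact that $d(df_{k_1} \wedge \dots \wedge df_{k_{n-1}}) = 0$ for smooth $f$). Hence, pointwise,
\[
\sum_{j=1}^n \frac{\partial}{\partial x_j}(\Adj Df_\varepsilon)_{jk} = 0
\qquad\text{in } \Omega',
\]
which gives $\int_\Omega \sum_j (\Adj Df_\varepsilon)_{jk}\,\partial_j\varphi\,dx = 0$ by integration by parts.

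The second main step is passage to the limit. Each entry of $\Adj Df$ is a polynomial of degree $n-1$ in the partial derivatives $\partial f_i/\partial x_\ell$. Writing the difference $(\Adj Df_\varepsilon)_{jk} - (\Adj Df)_{jk}$ as a telescoping sum in which each term contains one factor $\partial_\ell(f_\varepsilon - f)_i$ and $n-2$ factors of the form $\partial_\ell f_i$ or $\partial_\ell (f_\varepsilon)_i$, the generalized Hölder inequality with exponents $(p, p, \dots, p)$ (which sum reciprocally to $(n-1)/p < 1$, with room to spare since $p > n-1$) gives
\[
\bigl\|(\Adj Df_\varepsilon)_{jk} - (\Adj Df)_{jk}\bigr\|_{L_{p/(n-1)}(\Omega')}
\;\leq\; C\,\|Df_\varepsilon - Df\|_{L_p(\Omega')} \;\longrightarrow\; 0
\]
as $\varepsilon \to 0$, where $C$ depends on a uniform bound on $\|Df_\varepsilon\|_{L_p(\Omega')}$. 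Since $p/(n-1) > 1$ and $\Omega'$ is bounded, this convergence holds a fortiori in $L_1(\Omega')$, and the integral identity of the previous step passes to the limit.

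The key constraint that makes the whole argument work, and which I expect to be the only genuinely nontrivial point to verify, is precisely the hypothesis $p > n-1$: it is what makes $\Adj Df$ a locally integrable object at all (so the distributional divergence is defined), and it is what makes the multilinear map $(A_1,\dots,A_{n-1}) \mapsto \prod A_i$ continuous on $L_p$ into $L_1$, which is needed for the limit passage. Everything else (Piola's identity for smooth maps, the mollification convergence) is standard.
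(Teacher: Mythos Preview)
Your proof is correct and follows exactly the classical approach: mollify, use the Piola identity for smooth maps, and pass to the limit using that $\Adj$ is a multilinear map of degree $n-1$ in the partial derivatives, hence continuous from $L_p$ to $L_{p/(n-1)}\hookrightarrow L_1$ when $p>n-1$. The paper does not give its own proof of this lemma; it merely cites \cite{Resh1982, BojIwa1983}, and the remark following the lemma explicitly states that those references prove it precisely via smooth approximations to the Sobolev mapping---which is your argument---while \cite{Vod2007} gives an alternative proof that avoids approximation.
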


\begin{remark}
	The proofs in~\cite{Resh1982, BojIwa1983}
	rest on smooth approximations to a~Sobolev mapping.
	The new proof of this lemma in~\cite{Vod2007}
	avoids these approximations.
\end{remark}

We also need the following corollary to the Bezikovich theorem
(see Theorem~1.1 in~\cite{Gus1978} for instance).

\begin{lemma} \label{lem:bez}
	For every open set
	$U \subset \R^n$
	with
	$U\neq \R^n$
	there exists a~countable family
	$\mathcal{B}=\{B_j\}$
	of balls such that
	\begin{enumerate}
	\item 
		$\bigcup\limits_{j} B_j = U$;

	\item
		if
		$B_j=B_j(x_j,r_j) \in \mathcal{B}$
		then
		${\rm dist} (x_j, \partial U) = 12 r_j$;

	\item
		the families
		$\mathcal{B} = \{B_j\}$
		and
		$2 \mathcal{B} = \{2B_j\}$,
		where the symbol
		$2B$
		stands for the ball of doubled radius centered at the same point,
		constitute a~finite covering of~% the set 
		$U$;

	\item
		if the balls
		$2 B_j = B_j(x_j, 2 r_j)$, 
		$j=1$,
		$2$
		intersect
		then
		$\frac{5}{7} r_1 \leq r_2 \leq \frac{7}{5} r_1$;

	\item
		we can subdivide the family
		$\{2B_j\}$
		into finitely many tuples
		so that
		in each tuple the balls are disjoint
		and the number of tuples depends only on the dimension~%
		$n$.
	\end{enumerate}
\end{lemma}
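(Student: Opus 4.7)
The plan is to construct $\mathcal{B}$ as a Whitney-type ball decomposition of $U$ and then invoke the Besicovitch selection theorem (Theorem~1.1 of~\cite{Gus1978}) to cut this decomposition down to a countable subfamily with uniformly bounded overlap.

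First I will define the radius function $r : U \to (0, \infty)$ by $r(x) = {\rm dist}(x, \partial U)/12$; this is well-defined, positive, and $\frac{1}{12}$-Lipschitz because $U$ is open and $U \neq \R^n$. The open family $\{B(x, r(x))\}_{x \in U}$ plainly covers $U$ and every one of its members satisfies the normalization ${\rm dist}(x, r(x)) = 12\, r(x)$. Applying Theorem~1.1 of~\cite{Gus1978} to this covering will extract a countable subfamily $\mathcal{B} = \{B_j = B(x_j, r_j)\}$ that still covers $U$ (property~(1)) and that simultaneously partitions into a finite, dimension-dependent number of subfamilies each consisting of pairwise disjoint balls. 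Property~(2) holds by the very definition of $r$.

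Property~(4) is a short distance computation: if $2B_1 \cap 2B_2 \neq \emptyset$ then $|x_1 - x_2| \leq 2(r_1 + r_2)$, and the $1$-Lipschitz property of ${\rm dist}(\cdot, \partial U)$ together with~(2) gives
\begin{equation*}
	12\,|r_1 - r_2| = |{\rm dist}(x_1, \partial U) - {\rm dist}(x_2, \partial U)| \leq 2(r_1 + r_2),
\end{equation*}
which rearranges into $\frac{5}{7} r_1 \leq r_2 \leq \frac{7}{5} r_1$. The constant $12$ in~(2) was engineered precisely to produce these clean numerical bounds. For~(3) and~(5) I will combine~(4) with a bounded-degree plus greedy-coloring argument: by~(4), any ball $2B_k$ meeting a fixed $2B_j$ has radius within a factor $\frac{7}{5}$ of $r_j$ and center in the ball $B(x_j,\, 2r_j + \frac{14}{5} r_j)$, and using the Besicovitch multiplicity bound on $\mathcal{B}$ itself a volume comparison bounds the number of such $k$ by a constant $K(n)$ depending only on the dimension. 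The intersection graph of $\{2B_j\}$ thus has vertex degree $\leq K(n)$, so a greedy coloring partitions it into at most $K(n)+1$ colour classes of pairwise disjoint balls, giving~(5); property~(3) is then immediate since each point of $U$ lies in at most one ball per colour class.

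The main obstacle will be making the volume-counting step of~(5) rigorous — one has to interleave the multiplicity bound coming from Besicovitch with the radii comparison of~(4) rather than pack disjoint balls directly — but everything else is a mechanical unpacking of Theorem~1.1 of~\cite{Gus1978} and the $1$-Lipschitz continuity of ${\rm dist}(\cdot, \partial U)$.
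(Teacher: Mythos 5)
Your proposal is correct and follows the same route the paper intends: the paper gives no proof of this lemma at all, simply quoting it as a corollary of the Besicovitch covering theorem (Theorem~1.1 of~\cite{Gus1978}), and your argument is exactly the standard derivation from that theorem --- the normalization $r(x)=\operatorname{dist}(x,\partial U)/12$ gives (1)--(2), the $1$-Lipschitz distance computation gives (4), and the bounded-overlap volume count plus greedy coloring gives (3) and (5). The only point worth noting is that Theorem~1.1 of~\cite{Gus1978} is stated for bounded families, so for unbounded $U$ one should briefly reduce to dyadic layers $\{2^m\le\operatorname{dist}(x,\partial U)<2^{m+1}\}$ before selecting; this is routine and does not affect your scheme.
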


The main concepts of functional analysis
like weak convergence,
semicontinuous functionals,
and related theorems
are described in detail in~\cite{EkTem1976, KolmFom1976}.
Let us recall some of them.

\begin{theorem} \label{thm:conv}
	For a~normed vector space~%
	$V$
	and a~Banach space~%
	$W$
	consider a~continuous bilinear mapping
	$B: V\times W \to \R$.
	If
	$v_k \rightarrow v$
	strongly and 
	$w_k \rightarrow w$
	weakly
	then
	$B(v_k, w_k) \rightarrow B(v,w)$.
\end{theorem}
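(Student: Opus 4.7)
The plan is to exploit bilinearity of $B$ to split the difference $B(v_k,w_k) - B(v,w)$ into two pieces that can be handled separately, one using the strong convergence of $v_k$ and the other using the weak convergence of $w_k$. Specifically, I will write
\begin{equation*}
	B(v_k, w_k) - B(v, w) = B(v_k - v, w_k) + B(v, w_k - w)
\end{equation*}
and show that each summand tends to~$0$.

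For the second summand, I would observe that, for the fixed vector $v\in V$, the partial map $L_v: W \to \R$ defined by $L_v(u) = B(v, u)$ is linear and, by the continuity of~$B$, bounded: there is a constant $C$ with $|B(x, y)| \leq C \|x\|_V \|y\|_W$ for all $x\in V$ and $y\in W$, hence $\|L_v\| \leq C \|v\|_V$. A bounded linear functional on~$W$ is, by definition, an element of the topological dual, so weak convergence $w_k \to w$ in~$W$ immediately gives $L_v(w_k) \to L_v(w)$, i.e.\ $B(v, w_k - w) \to 0$.

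The first summand is where the Banach hypothesis on~$W$ is needed, and this is the main technical point. Using continuity of~$B$ once more,
\begin{equation*}
	|B(v_k - v, w_k)| \leq C \|v_k - v\|_V \cdot \|w_k\|_W.
\end{equation*}
The factor $\|v_k - v\|_V$ tends to~$0$ by the strong convergence hypothesis, so it suffices to show that $\{\|w_k\|_W\}$ is bounded. Here I would invoke the Banach--Steinhaus (uniform boundedness) theorem: since $W$ is a Banach space, every weakly convergent sequence in~$W$ is norm-bounded (view each $w_k$ as a bounded linear functional on the dual $W^*$ via the canonical embedding, note that $\langle w_k, f\rangle \to \langle w, f\rangle$ is a convergent, hence bounded, scalar sequence for each $f\in W^*$, and conclude $\sup_k \|w_k\|_W < \infty$). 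Combined with $\|v_k - v\|_V \to 0$, this gives $B(v_k - v, w_k) \to 0$, and adding the two estimates yields $B(v_k, w_k) \to B(v, w)$.

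The only genuinely nontrivial ingredient is the uniform boundedness principle used to bound $\|w_k\|_W$; the rest is bookkeeping with the bilinearity and the continuity estimate for~$B$. I do not anticipate any further obstacle.
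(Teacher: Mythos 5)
Your proof is correct: the splitting $B(v_k,w_k)-B(v,w)=B(v_k-v,w_k)+B(v,w_k-w)$, the bound $\|w_k\|_W\leq\mathrm{const}$ via Banach--Steinhaus (this is exactly where completeness of $W$ enters), and the observation that $B(v,\cdot)\in W^*$ handle the two terms, which is the standard textbook argument. The paper itself does not prove this statement but quotes it from the references \cite{EkTem1976, KolmFom1976}, where the proof given is essentially the one you wrote, so there is nothing to add.
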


\begin{theorem}[Mazur theorem,
	see~\cite{EkTem1976} for instance]  
	\label{thm:Mazur}
	Let 
	$v_k \rightarrow v$
	weakly in a~normed vector space
	$V$.
	Then there exist convex combinations
	\begin{equation*}
		w_k=\sum\limits_{m=k}\limits^{N(k)} \lambda_m^k v_k, \quad
		\text{where} \quad
		\sum\limits_{m=k}\limits^{N(k)} \lambda_m^k =1, 
		\quad \lambda_m^k\geq 0, 
		\quad k\leq m\leq N(k),
	\end{equation*}
	converging to~%
	$v$
	in norm.
\end{theorem}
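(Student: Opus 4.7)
The plan is to prove Mazur's theorem by the standard Hahn--Banach separation argument, which reduces the question to showing that $v$ lies in the closed convex hull of each tail of the sequence.

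For each integer $k$, set $C_k = \overline{\operatorname{conv}}\{v_m : m\geq k\}$, the closure in $V$ of the set of all finite convex combinations of the tail $v_k, v_{k+1}, \ldots$. This is a closed convex subset of the normed vector space $V$. The central claim is that $v \in C_k$ for every $k$. Once this is established, pick for each $k$ an element $w_k \in \operatorname{conv}\{v_m : m\geq k\}$ with $\|w_k - v\| < 1/k$; such a $w_k$ exists because $v$ lies in the closure of that convex set. By definition $w_k$ is a finite convex combination $\sum_{m=k}^{N(k)} \lambda_m^k v_m$ with the required properties, and $w_k \to v$ in norm.

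The core step is proving that $v \in C_k$. I would argue by contradiction: assume $v \notin C_k$ for some $k$. Since $\{v\}$ is compact and convex, $C_k$ is closed and convex, and they are disjoint, the geometric form of the Hahn--Banach theorem yields a continuous linear functional $\ell \in V^*$ and a constant $\alpha \in \R$ such that $\ell(v) < \alpha \leq \ell(u)$ for all $u \in C_k$. In particular $\ell(v_m) \geq \alpha$ for every $m \geq k$. But weak convergence $v_m \rightharpoonup v$ means exactly that $\ell(v_m) \to \ell(v)$ for every $\ell \in V^*$, hence $\ell(v) \geq \alpha$, contradicting the strict inequality $\ell(v) < \alpha$. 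Thus $v \in C_k$ for every $k$, as required.

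The main obstacle is the appeal to Hahn--Banach separation for a point and a closed convex set: one needs the correct version of the geometric Hahn--Banach theorem valid in a general normed space over $\R$ (separation of a compact convex set from a disjoint closed convex set by a continuous linear functional). For complex scalars one passes to real linear functionals in the usual way. Beyond that, the argument is routine: the extraction of $w_k$ from $C_k$ is just the definition of closure, and the indexing $k \leq m \leq N(k)$ together with $\sum \lambda_m^k = 1$, $\lambda_m^k \geq 0$ is recorded automatically from the construction.
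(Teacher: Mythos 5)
Your argument is correct: it is the standard Hahn--Banach separation proof of Mazur's theorem (a point not in the norm-closed convex hull of a tail could be strictly separated from it, contradicting weak convergence), and the extraction of the finite convex combinations $w_k=\sum_{m=k}^{N(k)}\lambda_m^k v_m$ with $\|w_k-v\|<1/k$ is exactly as it should be. The paper itself gives no proof of this statement --- it is quoted as a classical result from the cited reference --- and your proof coincides with the standard textbook argument, so there is nothing to reconcile; note only that the formula in the paper's statement contains a typo ($v_k$ should read $v_m$ inside the sum), which your construction implicitly corrects.
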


\begin{definition}
	A~function
	$J: V\to \R \cup \{\infty\}$
	is called {\it sequentially weakly lower semicontinuous}
	whenever
	\begin{equation*}
		J(u_0)\leq\varliminf\limits_{u_k\rightarrow u_0} J(u_k)
	\end{equation*}
	for every weakly converging sequence
	$\{u_k\}\subset V$.
\end{definition}

\begin{definition}
	Let  an~open set
	$\Omega\subset \R^k$,
	a~mapping 
	$G:\Omega \times \R^{m} \to \overline{\R}$
	enjoys the~{\it Carath\'eodory conditions}
	whenever 

	{\bf (a)}
		$G(x, \cdot)$
		is continuous on
		$\R^{m}$
		for almost all
		$x\in \Omega$;

	{\bf (b)}
		$G(\cdot, a)$
		is measurable on~%
		$\Omega$
		for all
		$a\in\R^{m}$.
\end{definition}

The reader not familiar with mappings with bounded distortion
may look at~\cite{Rickman1993,Resh1982}.
Let us recall the main concepts and theorems.

\begin{theorem}[\cite{Resh1982}]\label{thm:Resh}
	Consider a~sequence
	$f_m: \Omega \to \R^n$
	of mappings of class
	$W^1_{n, \loc} (\Omega)$.
	Suppose that
	the following conditions are met:

	{\rm (a)}
	$\{f_m\}$
	is locally bounded in
	$W^1_{n, \loc} (\Omega)$;

	{\rm (b)}
	$\{f_m\}$
	converges in
	$L_1(\Omega)$
	to some mapping
	$f_0$
	as
	$m\rightarrow \infty$.

	Then %the limit mapping
	$f_0\in W^1_{n,\loc}(\Omega)$
	and
	\begin{equation*}
		\int\limits_{\Omega} \varphi(x) J(x,f_m) \, dx \longrightarrow
		\int\limits_{\Omega} \varphi(x) J(x,f_0) \, dx \quad 
		\text{ as } m\longrightarrow \infty
	\end{equation*}
	for every continuous real function
	$\varphi:U\to \R$
	with compact support in~%
	$\Omega$.
\end{theorem}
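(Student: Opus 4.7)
The proof rests on two ingredients: the reflexivity and Rellich--Kondrachov compactness of $W^1_{n,\loc}(\Omega)$, and the divergence structure of Jacobians supplied by Lemma~\ref{lem:free_div}.

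First, since $W^1_n$ is reflexive (as $1<n<\infty$), the local boundedness of $\{f_m\}$ in $W^1_{n,\loc}(\Omega)$ yields a subsequence converging weakly in $W^1_{n,\loc}$. The $L_1$-convergence of the whole sequence to $f_0$ forces the weak limit to coincide with $f_0$, so $f_0\in W^1_{n,\loc}(\Omega)$, and the full sequence converges weakly to $f_0$ in $W^1_{n,\loc}$ (every subsequence has a further weakly convergent subsubsequence with the same limit). By the compact embedding of Theorem~\ref{thm:embedding}(2), $f_m\to f_0$ strongly in $L_{q,\loc}(\Omega)$ for every $1\le q<\infty$.

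Next, fix $\varphi\in C_0^\infty(\Omega)$ and expand the Jacobian along the first row of $Df$: $J(x,f)=\sum_{j=1}^n \partial_j f_1\cdot A_{1j}$, where $A_{1j}$ is the $(1,j)$-cofactor. Since $f\in W^1_n$ and $n>n-1$, Lemma~\ref{lem:free_div} gives $\sum_j \partial_j A_{1j}=0$ distributionally, and integration by parts yields
\begin{equation*}
\int_\Omega \varphi(x)\,J(x,f)\,dx \;=\; -\sum_{j=1}^n\int_\Omega \partial_j\varphi(x)\,f_1(x)\,A_{1j}(x)\,dx.
\end{equation*}
Applied to both $f_m$ and $f_0$, this reduces the claim to showing that each $(n-1)$-minor $A_{1j}(Df_m)$ converges weakly in a space paired by H\"older with the strong $L_{q,\loc}$-convergence of $f_1^m$.

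The main technical step is an induction on the rank $k\in\{1,\dots,n\}$ of subdeterminants: every $k$-minor of $Df_m$ converges weakly in $L_{n/k,\loc}(\Omega)$ to the corresponding $k$-minor of $Df_0$. The base case $k=1$ is the weak convergence of $Df_m$ in $L_{n,\loc}$, already established. For the inductive step, one exploits the analogue of Lemma~\ref{lem:free_div} for arbitrary subdeterminants: a $k$-minor can be written as the distributional divergence of a vector field whose entries are products of a single component $f_i$ and $(k-1)$-minors of $Df$. Integration by parts against a test function $\varphi\in C_0^\infty$ then reduces the $k$-minor to a sum of integrals of $\partial\varphi\cdot f_i\cdot(\text{a }(k-1)\text{-minor})$, to which Theorem~\ref{thm:conv} applies with the inductive hypothesis (weak $L_{n/(k-1),\loc}$-convergence of the $(k-1)$-minors) paired against strong $L_{q,\loc}$-convergence of $f_i^m$, taking $q$ so large that $(k-1)/n + 1/q \le 1$. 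The case $k=n$ gives the desired convergence $\int \varphi J(x,f_m)\,dx\to\int \varphi J(x,f_0)\,dx$ for all smooth test functions; a density argument, using the uniform $L^1_{\loc}$-bound $|J(x,f_m)|\le|Df_m|^n$ to approximate any $\varphi\in C_c(\Omega)$ uniformly on a compact superset of its support by $C_0^\infty$-functions, extends the conclusion to continuous compactly supported test functions. The principal obstacle is formulating and justifying the divergence identity for arbitrary $k$-minors and tracking the H\"older exponents so that Theorem~\ref{thm:conv} applies at each stage; this is the combinatorial heart of the Reshetnyak-type argument and relies on the fact that $f\in W^1_n$ with $n>n-1$, well within the range where Lemma~\ref{lem:free_div} and its higher-rank analogues remain valid.
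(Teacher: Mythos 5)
Your argument is correct and follows the standard weak-continuity-of-minors route; the paper itself does not prove Theorem~\ref{thm:Resh} (it is imported from Reshetnyak's monograph), but your induction on minor rank via the divergence-free cofactor identity of Lemma~\ref{lem:free_div}, integration by parts, and the strong-times-weak pairing of Theorem~\ref{thm:conv} is exactly the technique the paper uses in its proof of Lemma~\ref{lem:AdjJ}. The only points left implicit --- identifying the distributional limit of the $k$-minors with their weak $L_{n/k,\loc}$ limit via boundedness and reflexivity, and the final density step from $C_0^\infty$ to continuous compactly supported test functions using the local $L_1$-bound on the Jacobians --- are standard and you indicate them adequately.
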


\begin{definition}[\cite{IwaSve1993}]
	A~mapping
	$f:\Omega\to \R^n$
	with
	$f \in W^1_{1, \loc}(\Omega)$
	is called a~{\it mapping with finite distortion},
	written
	$f \in FD (\Omega)$,
	whenever
	$J(x,f) \geq 0$
	almost everywhere in 
	$\Omega$
	and
	\begin{equation*}
		|Df(x)|^n \leq K(x) J(x,f) \quad
		\text{for almost all}\: x\in \Omega,
	\end{equation*}
	where
	$0< K(x) < \infty$
	almost everywhere in 
	$\Omega$.
\end{definition}

\begin{remark}
	In other words,
	the {\it finite distortion} condition 
	amounts to the~vanishing of the partial derivatives of %the mapping
	$f \in W^1_{1, \loc}(\Omega)$
	almost everywhere on the zero set of the Jacobian.
\end{remark}

\begin{definition}[\cite{Resh1982}]
	Given
	$f:\Omega\to \R^n$
	with
	$f\in W^1_{n,\loc}(\Omega)$
	and
	$J(x,f)\geq 0$ 
	almost everywhere in 
	$\Omega$,
	the {\it distortion} of~%
	$f$
	at~%
	$x$
	is the function
	\begin{equation*}
		K(x)=
		\begin{cases}
			\frac{|Df(x)|^n}{J(x,f)} & \text{if } J(x,f)>0,
			\\
			1 & \text{if } J(x,f)=0.
		\end{cases}
	\end{equation*}
\end{definition}

\begin{theorem}[\cite{ManVill1998}]\label{thm:ManVill}
	Let 
	$f\in W^1_{n,\loc}(\Omega)$
	be nonconstant mapping
	whose dilatation
	$K(x)$
	is in
	$L_{s, \loc}(\Omega)$.
	Then, if 
	$s>n-1$
	and
	$J(x,f)\geq 0$ for almost all $x \in \Omega$,
	the mapping 
	$f$
	is continuous,
	discrete,
	and open. 
\end{theorem}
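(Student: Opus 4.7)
The plan is to follow the Manfredi--Villamor strategy: first use the finite distortion structure to gain enough regularity of $Df$ to deduce continuity of $f$, then apply topological degree theory to conclude openness and discreteness.

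For continuity I would exploit the bound $|Df(x)|^n\leq K(x)\,J(x,f)$ combined with $K\in L_{s,\loc}(\Omega)$, $s>n-1$. For any $p<sn/(s+1)$, raising to the power $p/n$ and applying Hölder's inequality with exponents $sn/p$ and $sn/(sn-p)$ yields, on a ball $B\Subset\Omega$,
\begin{equation*}
	\int_B |Df|^p \,dx \leq \Bigl(\int_B K^s\,dx\Bigr)^{p/(sn)}
	\Bigl(\int_B J(x,f)^{ps/(sn-p)}\,dx\Bigr)^{(sn-p)/(sn)}.
\end{equation*}
Since $ps/(sn-p)\leq 1$ for such $p$ and $s>n-1$ allows the choice of $p$ strictly above $n-1$, I obtain $Df\in L_{p,\loc}(\Omega)$ for some $p>n-1$. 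Combined with a Caccioppoli-type inequality derived from the finite distortion condition (whose proof exploits the divergence-free columns of $\Adj Df$ supplied by Lemma~\ref{lem:free_div} to integrate $J(x,f)=\sum_j\partial_j f_i\,(\Adj Df)_{ij}$ by parts), a Gehring-type self-improvement upgrades the estimate to $|Df|\in L_{n+\varepsilon,\loc}(\Omega)$ for some $\varepsilon>0$. The Sobolev embedding then furnishes a (Hölder-)continuous representative of $f$.

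Once $f$ is continuous I would introduce the topological degree $\deg(f,U,y)$ for open $U\Subset\Omega$ and $y\notin f(\partial U)$ via the Reshetnyak integral formula $\deg(f_m,U,y)=\int_U \omega(f_m(x)-y)\,J(x,f_m)\,dx$ applied to smooth approximations $f_m$ and test forms $\omega$ concentrated near the origin. Theorem~\ref{thm:Resh} lets us pass to the limit in the Jacobian integral, so $\deg(f,U,y)$ is well-defined, locally constant in $y$, and integer-valued. The hypothesis $J(x,f)\geq 0$ forces $\deg(f,U,y)\geq 0$, and since $f$ is nonconstant there is a ball on which $J>0$ on a set of positive measure, giving a nonempty open set of $y$'s with strictly positive degree. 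Classical degree arguments for mappings with finite distortion (cf.~\cite{Resh1982,Rickman1993}) then yield openness (images of open sets are open) and discreteness (no fiber has an accumulation point, since a cluster point would force infinite multiplicity on arbitrarily small balls, contradicting the finiteness of the degree).

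The hard part is the continuity step, since the Sobolev embedding $W^1_n\hookrightarrow C$ fails in dimension $n$. Extracting continuity depends crucially on the strict inequality $s>n-1$: this is exactly the threshold that allows the choice $p>n-1$ in the Hölder estimate above, which is in turn needed to invoke the divergence-free structure of $\Adj Df$ and push the self-improvement past the critical exponent $n$. The value $s=n-1$ is sharp, so the whole argument hinges on working with the small amount of extra integrability granted by $s>n-1$.
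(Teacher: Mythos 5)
This theorem is not proved in the paper at all: it is quoted from \cite{ManVill1998}, with the continuity part attributed to Theorem~2.3 of \cite{VodGold1976}, so your sketch has to be measured against that argument --- and it diverges from it at the two critical points. Your continuity step does not work. The H\"older estimate you write only produces $Df\in L_{p,\loc}(\Omega)$ for $p<\frac{sn}{s+1}\leq n$, which is weaker than the hypothesis $f\in W^1_{n,\loc}(\Omega)$ you already have, so nothing is gained; and the asserted Gehring-type self-improvement to $|Df|\in L_{n+\varepsilon,\loc}(\Omega)$ is simply false under the hypothesis $K\in L_{s,\loc}(\Omega)$ with $s$ finite. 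Gehring's lemma requires bounded distortion; for finite $s$ the radial maps $f(x)=\frac{x}{|x|}\log^{-\alpha}(1/|x|)$ (suitable $\alpha$) have $K\in L_{s,\loc}$ for every $s<\infty$, lie in $W^1_{n,\loc}$, yet $|Df|\notin L_{n+\varepsilon,\loc}$ for any $\varepsilon>0$ and the modulus of continuity is only logarithmic, so no H\"older continuity via Sobolev embedding is available. Continuity in fact comes from the much softer oscillation/monotonicity estimates valid for any $W^1_{n,\loc}$ mapping of finite distortion (this is exactly \cite{VodGold1976}); no supercritical integrability is involved.

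The second, and main, gap is in openness and discreteness, which is the real content of \cite{ManVill1998}. Knowing that the degree is nonnegative and positive for some values does not give either property: the crux is to prove that $f$ is \emph{light}, i.e.\ that each fiber $f^{-1}(y)$ is totally disconnected, and your closing argument (``a cluster point would force infinite multiplicity, contradicting finiteness of the degree'') breaks down precisely because a fiber could a priori contain a nondegenerate continuum, to which no finite-multiplicity count applies. Manfredi and Villamor obtain lightness by showing that $u=\log|f-y|$ is a supersolution of a degenerate elliptic ($\mathcal{A}$-harmonic type) equation whose weight is controlled by $K$, and by capacity estimates for such equations; it is here --- not in your H\"older computation, where the needed $p>n-1$ is automatic since $f\in W^1_{n,\loc}$ --- that the hypothesis $s>n-1$ is genuinely used. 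Once continuity, sense preservation ($J(x,f)\geq 0$ plus nonconstancy) and lightness are in hand, openness and discreteness follow from the Titus--Young/Chernavskii-type theorem (cf.\ \cite{Resh1982,Rickman1993}). Without the lightness step your proposal is a plan for the easy part only.
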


\begin{remark}
	Theorem 2.3 of~\cite{VodGold1976} shows that
	this mapping is continuous. 
\end{remark}

\begin{definition}[\cite{Vod2005}]
	{\it a~mapping 
	$\varphi:\Omega\to \Omega'$ 
	induces a~bounded operator
	$\varphi^*:L^1_p(\Omega^{\prime})\to L^1_q(\Omega)$ 
	by the  composition rule}, 
	$1\leq q\leq p<\infty$, 
	if the~following properties fulfil:

	(a) If two quasicontinuous functions 
		$f_1$, $f_2\in L_{p}^{1}(\Omega^{\prime})$ 
		are distinguished on a~set of 
		$p$-capacity 
		zero, 
		then the functions
		$f_1\circ\varphi$, 
		$f_2\circ\varphi$ 
		are distinguished on a~set of
		measure zero;

	(b) If  
		$\tilde f\in L_{p}^{1}(\Omega^{\prime})$ 
		is a~quasicontinuous representative of 
		$f$,
		then
		$\tilde f\circ\varphi\in L_{p}^{1}(\Omega)$ 
		but 
		$\tilde f\circ\varphi$ 
		is not required
		to be quasicontinuous;

	(c) The mapping  
	$\varphi^*: f\mapsto\tilde f\circ\varphi$, 
	where
	$\tilde f$ 
	is a~quasicontinuous representative of 
	$f$,
	is a~bounded operator 
	$L^1_p(\Omega^{\prime})\to L^1_q(\Omega)$.
\end{definition}

The definition and properties 
$p$-capacity 
see, for instance, in
\cite{CV1996,Maz2011,Resh1969}.

\begin{theorem}[{\cite[Theorem 4]{VodUhl2002}}]\label{thm4}
	Take two open sets
	$\Omega$~%
	and~%
	$\Omega'$
	in
	$\R^n$
	with
	$n\geq 1$.
	If a~mapping
	$\varphi: \Omega \to \Omega'$
	induces  a~bounded composition operator
	\begin{equation*}
		\varphi^*: L^1_p(\Omega') \cap C^\infty(\Omega')\to L^1_q (\Omega), 
		\quad 1\leq q \leq p \leq n,
	\end{equation*}
	with
	$\varphi(f)=f\circ \varphi$
	then~%
	$\varphi$
	has Luzin $\N^{-1}$-property.
\end{theorem}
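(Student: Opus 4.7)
The goal is to show that for every $E \subset \Omega'$ with $|E| = 0$, the preimage $\varphi^{-1}(E)$ has Lebesgue measure zero. By localization to bounded subsets of $\Omega$ and outer regularity of Lebesgue measure, it suffices to prove that $|K \cap \varphi^{-1}(U_k)| \to 0$ along every decreasing sequence of open neighborhoods $U_k \supset E$ with $|U_k| \to 0$, where $K \Subset \Omega$ is an arbitrary compact set. This reduction lets me work with test functions of compact support in $\Omega'$ and extract measure information through the composition operator.

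For each Euclidean ball $B = B(y,r)$ with $2B \subset \Omega'$, introduce the standard smooth bump $\eta_B \in C_0^\infty(2B) \subset C^\infty(\Omega') \cap L^1_p(\Omega')$ with $\eta_B \equiv 1$ on $B$ and $|\nabla \eta_B| \leq C/r$; then $\|\nabla \eta_B\|_{L_p(\Omega')} \leq C' r^{(n-p)/p}$. The hypothesis that $\varphi^*$ is bounded gives
$$\|\nabla(\eta_B \circ \varphi)\|_{L_q(\Omega)} \leq K \|\nabla \eta_B\|_{L_p(\Omega')} \leq K' r^{(n-p)/p}.$$
Since $\eta_B \circ \varphi$ equals $1$ a.e.\ on $\varphi^{-1}(B)$ and vanishes outside $\varphi^{-1}(2B)$, it is admissible for the $q$-capacity of the condenser $(\varphi^{-1}(B),\varphi^{-1}(2B))$, and a Sobolev--Poincar\'e inequality (combined with the compactness of $K$) translates the gradient bound into a measure estimate of the form $|K \cap \varphi^{-1}(B)| \leq K'' r^{\alpha}$ for some exponent $\alpha = \alpha(n,p,q) > 0$. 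I would then cover $U_k$ by a Besicovich family $\{B_i\}$ provided by Lemma~\ref{lem:bez}, so that the enlarged balls $\{2B_i\}$ lie in $\Omega'$, have multiplicity bounded in terms of $n$ only, and $\sum |B_i|$ is comparable to $|U_k|$. Summing the per-ball estimates and letting $|U_k| \to 0$ would yield the desired conclusion.

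The principal obstacle is to make the scaling exponent $\alpha$ large enough that $\sum r_i^\alpha \to 0$ whenever $\sum r_i^n \to 0$ along such covers. The straightforward Sobolev embedding $L^1_q \hookrightarrow L_{q^*}$ gives $\alpha = q^*(n-p)/p$, which is $\geq n$ only in the matched-exponent case $q = p$. In the genuinely mixed regime $q < p$, and at the critical endpoint $p = n$, one must refine the capacity bound by exploiting the bounded-overlap structure of the $2B_i$, an interpolation or H\"older step linking the $p$- and $q$-norms, and the fact that for $p \leq n$ positive Lebesgue measure forces positive $q$-capacity, so that a capacity estimate for $\varphi^{-1}(E)$ converts back into a measure estimate. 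Property~(a) of the definition of a bounded composition operator, which already guarantees that $p$-capacity-null sets pull back to measure-null sets, is what allows the quasicontinuous representatives to be used unambiguously throughout, and the hypotheses $1 \leq q \leq p \leq n$ enter precisely at this final step.
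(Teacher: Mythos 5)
Your outline reproduces the right skeleton (bump functions $\eta_B$ adapted to balls, the operator bound, a Poincar\'e--Sobolev step, a Besicovitch cover as in Lemma~\ref{lem:bez}), but the step you yourself flag as ``the principal obstacle'' is exactly where the actual proof needs a new idea, and your suggested remedies do not supply it. Estimating each ball only through the \emph{global} operator norm gives $|K\cap\varphi^{-1}(B)|\lesssim r^{\alpha}$ with $\alpha=q^*(n-p)/p$, which, as you note, is useless for $q<p$ and collapses entirely at $p=n$; bounded overlap of the $2B_i$, an unspecified interpolation between the $p$- and $q$-norms, or the remark that positive measure forces positive $q$-capacity (you never produce any capacity estimate for $\varphi^{-1}(E)$ to convert) do not close this. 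What the argument of~\cite[Theorem 4]{VodUhl2002}, reproduced in this paper, actually uses is a \emph{localized} bound: one introduces the set function
\begin{equation*}
	\Phi(A') \;=\; \sup\limits_{f \in \overset{\circ}L{}^1_p (A')}
	\left( \frac{\|\varphi^* f \mid L^1_q(\Omega)\|}{\|f \mid L^1_p(A')\|} \right)^{\sigma},
	\qquad \sigma=\frac{pq}{p-q},
\end{equation*}
and the key lemma (Lemma~\ref{lem:Phi}) states that $\Phi$ is bounded, monotone and countably additive on open sets. The per-ball estimate then reads $|\varphi^{-1}(B_i)\cap Q|^{\frac1q-\frac1n}\leq C\,\Phi(2B_i)^{1/\sigma}|B_i|^{\frac1p-\frac1n}$, and H\"older's inequality in the summation over the Besicovitch cover of an open $U\supset E$ yields a bound by $\bigl(\sum_i\Phi(2B_i)\bigr)^{1/\sigma}\bigl(\sum_i|B_i|\bigr)^{\frac1p-\frac1n}\leq (\zeta_n\Phi(U))^{1/\sigma}(C|U|)^{\frac1p-\frac1n}$. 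It is the countable additivity of $\Phi$, not any extra power of the radius, that makes the first factor summable and small, and it is precisely here that the hypotheses $1\leq q< p\leq n$ enter (with the case $q=p$ handled separately, $\sigma=\infty$); this is the mechanism your proposal lacks.

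Two smaller points. First, your Poincar\'e step needs a normalization: $\eta_B\circ\varphi$ is not compactly supported in $\Omega$, so to pass from a gradient bound to a measure bound one must know the composed function vanishes on a set of definite size (in the paper's variant, on a compact $T$ of positive measure chosen via Luzin's theorem); as written, the inequality $|K\cap\varphi^{-1}(B)|\lesssim r^{\alpha}$ is not justified. Second, the reduction via decreasing neighborhoods $U_k\supset E$ is fine by outer regularity, and property (a) of the definition of composition operator is indeed what makes quasicontinuous representatives harmless, but neither of these substitutes for the additive set function $\Phi$, which is the heart of the proof.
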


\begin{remark}
	Theorem~4 of~\cite{VodUhl2002} is stated for a~mapping
	$\varphi: \Omega \to \Omega'$
	generating a~bounded composition operator
	$\varphi^*: L^1_p(\Omega') \to L^1_q (\Omega)$
	with
	$1\leq q \leq p \leq n$.
	Observe that
	only smooth test functions are used in its proof,
	%Thus,
	%the proof of Theorem~4 of~\cite{VodUhl2002}
	which therefore
	also justifies Theorem~\ref{thm4}.
\end{remark}

Following~\cite{Vod2012},
for a~mapping
$f:\Omega\to\Omega'$
of class
$W^1_{1,\loc}(\Omega)$
define the{\it~distortion operator function}
\begin{equation*}
	K_{f,p} (x) =
	\begin{cases}
		\frac{|Df(x)|}{|J(x,f)|^{\frac{1}{p}}} &
		\text{ for } 
		x\in \Omega\setminus(Z\cap\Sigma),\\
		0 &
		\text{ otherwise},
	\end{cases}
\end{equation*}
where~%
$Z$
is the zero set of the Jacobian
$J(x,f)$
and
$\Sigma$
is a~singularity set,
meaning that
$|\Sigma|=0$
and~%
$f$
enjoys Luzin $\N$-property
outside~%
$\Sigma$.

\begin{theorem}[\cite{Vod2012, VodUhl1998, VodUhl2002}] \label{thm:Vod5}
	A~homeomorphism
	$\varphi:\Omega\to\Omega'$
	induces a~bounded composition operator 
	\begin{equation*}
	\varphi^*: L^1_p(\Omega')\to L^1_q(\Omega),
	\quad
	1\leq q \leq p <\infty,
	\end{equation*}
	where
	$\varphi^*(f)=f\circ\varphi$
	for
	$f\in L^1_p(\Omega')$,
	if and only if
	the following conditions are met:
	\begin{enumerate}
	\item
		$\varphi \in W^1_{q, \loc}(\Omega)$;
	\item
		the mapping~%
		$\varphi$
		has finite distortion;
	\item
		$K_{\varphi,p}(\cdot) \in L_{\varkappa}(\Omega)$,
		where
		$\frac{1}{\varkappa}=\frac{1}{q}-\frac{1}{p}$, $1\leq q \leq p <\infty$
		{\rm(}and
		$\varkappa=\infty$
		for
		$q=p${\rm)}.
	\end{enumerate}

	Moreover,
	\begin{equation*}
		\|\varphi^*\|\leq\|K_{\varphi,p}(\cdot) \mid L_{\varkappa}(\Omega)\|
		\leq C \|\varphi^*\|
	\end{equation*}
	for some constant~%
	$C$.
\end{theorem}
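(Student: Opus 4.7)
The plan is to prove the two implications separately. The sufficiency direction is essentially a chain-rule plus Hölder computation, while the necessity direction is the substantive content and requires a set-function/Radon--Nikodym argument to extract the distortion bound.

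\textbf{Sufficiency} ($\Leftarrow$). Assume the three conditions. For a smooth compactly supported $f$ on $\Omega'$, the chain rule for Sobolev mappings (valid because $\varphi\in W^1_{q,\loc}$ has finite distortion, so $D(f\circ\varphi)=0$ on $\{J=0\}$) gives
\begin{equation*}
|\nabla(f\circ\varphi)(x)|\leq |\nabla f(\varphi(x))|\cdot |D\varphi(x)|
\quad\text{a.e.}
\end{equation*}
Off the singular set write $|D\varphi(x)|=K_{\varphi,p}(x)\cdot |J(x,\varphi)|^{1/p}$, raise to the $q$-th power, and apply Hölder's inequality with conjugate exponents $p/q$ and $\varkappa/q$ (which match by $1/\varkappa=1/q-1/p$):
\begin{equation*}
\int_{\Omega}|\nabla(f\circ\varphi)|^{q}\,dx
\leq \Bigl(\int_{\Omega}|\nabla f(\varphi(x))|^{p}|J(x,\varphi)|\,dx\Bigr)^{q/p}
\Bigl(\int_{\Omega}K_{\varphi,p}^{\varkappa}\,dx\Bigr)^{q/\varkappa}.
\end{equation*}
The area/change-of-variables formula (applicable because a homeomorphism of finite distortion with $K_{\varphi,p}\in L_\varkappa$ enjoys the Lusin $\N$-property) bounds the first factor by $\|\nabla f\mid L_p(\Omega')\|^q$. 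Extending to general quasicontinuous $f$ gives $\|\varphi^*\|\leq \|K_{\varphi,p}\mid L_\varkappa(\Omega)\|$.

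\textbf{Necessity} ($\Rightarrow$). Starting from $\|\nabla(f\circ\varphi)\mid L_q(\Omega)\|\leq N\|\nabla f\mid L_p(\Omega')\|$ I would proceed in three steps. First, testing against $f(y)=\eta(y)\,y_i$ where $\eta$ is a smooth cutoff on a ball $B\Subset\Omega'$ recovers each component $\varphi_i$ as an $L^1_{q}$-function on $\varphi^{-1}(B)$, with a bound controlled by $N$; covering yields $\varphi\in W^1_{q,\loc}(\Omega)$. Second, finite distortion: if $D\varphi$ were essentially nonzero on a set $E$ where $J(x,\varphi)=0$, one can construct (using cutoffs of the composition with $\varphi$ on $\varphi(E)$, noting that $\varphi(E)$ has measure zero by the area formula in this regularity) test functions with arbitrarily small $\|\nabla f\mid L_p\|$ but $\|\nabla(f\circ\varphi)\mid L_q\|$ bounded below, violating the operator bound; Theorem~\ref{thm4} supplies the Lusin $\N^{-1}$-property used here.

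Third, and the core of the proof, one must extract $K_{\varphi,p}\in L_\varkappa$. Define the set function
\begin{equation*}
\Phi(U)=\sup\bigl\{\|\nabla(f\circ\varphi)\mid L_q(\varphi^{-1}(U))\|^{r}
:\ f\in C_0^\infty(U),\ \|\nabla f\mid L_p(U)\|\leq 1\bigr\}
\end{equation*}
on open $U\subset\Omega'$, where the exponent $r=\varkappa q/(\varkappa-q)$ (interpreted as $\infty$ when $p=q$) is chosen so that the sub/super-additivity from Hölder turns into genuine countable additivity on disjoint unions. After verifying monotonicity and $\sigma$-additivity, standard extension promotes $\Phi$ to a Borel measure on $\Omega'$, absolutely continuous with respect to Lebesgue measure (its vanishing on null sets follows from the $\N^{-1}$-property and the composition bound). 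Let $\psi=d\Phi/d\mathcal{L}^n$. A local duality argument on small balls identifies, via a limiting choice of test functions, the density as $\psi(\varphi(x))\cdot J(x,\varphi)\gtrsim K_{\varphi,p}(x)^{\varkappa}$ almost everywhere, and the change-of-variables formula then converts $\int_{\Omega'}\psi\,dy\leq N^{r}$ into $\int_\Omega K_{\varphi,p}^{\varkappa}\,dx\leq C\,\|\varphi^*\|^{\varkappa}$, giving both the membership and the reverse norm inequality $\|K_{\varphi,p}\mid L_\varkappa\|\leq C\|\varphi^*\|$.

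The main obstacle is Step~3: establishing countable additivity of $\Phi$ with the precise exponent $r$, proving its absolute continuity with respect to Lebesgue measure (which needs a careful use of the $\N^{-1}$-property since general composition operators need not preserve null sets without homeomorphy), and then extracting a \emph{pointwise} lower bound on the Radon--Nikodym derivative by a density/Lebesgue-differentiation argument with nearly optimal test functions on shrinking balls. The case $p=q$ (so $\varkappa=\infty$) requires a separate treatment where $\Phi$ is replaced by a $\sup$ over balls and $L^\infty$-estimates take the place of Radon--Nikodym.
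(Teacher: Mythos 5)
This theorem is not proved in the paper at all: it is quoted from the literature, with the remark after it attributing necessity to \cite{VodUhl1998, VodUhl2002} (and \cite{Ukhlov1993}) and sufficiency to Theorem~5 of \cite{Vod2012}. Your overall strategy (chain rule plus H\"older for sufficiency; a countably additive set function, Radon--Nikodym differentiation and a change of variables for necessity) is indeed the strategy of those cited proofs, so the architecture is right. But there is a concrete error in what you yourself call the core step. The exponent in the set function must be $\sigma=\varkappa=\frac{pq}{p-q}$, not your $r=\frac{\varkappa q}{\varkappa-q}$: since $\frac1\varkappa=\frac1q-\frac1p$, your formula gives $\frac1r=\frac1q-\frac1\varkappa=\frac1p$, i.e.\ $r=p$ (and, incidentally, your parenthetical ``$r=\infty$ when $p=q$'' contradicts your own formula, which tends to $q$ as $\varkappa\to\infty$). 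With exponent $r=p$ countable additivity genuinely fails: taking disjoint sets $U_i$, near-optimal $f_i$ with disjoint supports and $f=\sum c_if_i$, $\sum c_i^p\le1$, H\"older in the $c_i$ gives $\Phi(\bigcup U_i)\ge\bigl(\sum\Phi(U_i)^{q/(p-q)}\bigr)^{(p-q)/q}$, which is \emph{smaller} than $\sum\Phi(U_i)$ when $q/(p-q)>1$; the computation closes to exact additivity only when the exponent satisfies $\frac{q}{\sigma}+\frac qp=1$, i.e.\ $\sigma=\frac{pq}{p-q}=\varkappa$. The same wrong exponent then spoils the Radon--Nikodym step: the pointwise bound one extracts has the shape $K_{\varphi,p}(x)^{\sigma}\lesssim \psi(\varphi(x))J(x,\varphi)$, and only with $\sigma=\varkappa$ does integration and the change-of-variables inequality yield $\|K_{\varphi,p}\mid L_\varkappa(\Omega)\|\le C\|\varphi^*\|$. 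Note that the paper's own Lemma~\ref{lem:Phi} (used for the a.e.-injectivity argument, not for this theorem) displays exactly the correct exponent $\sigma=\frac{pq}{p-q}$ for $p<\infty$ and $\sigma=q$ for $p=\infty$.

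Two smaller points. In the necessity step you invoke Theorem~\ref{thm4} for the Luzin $\N^{-1}$-property, but that theorem is stated only for $q\le p\le n$, whereas the present theorem allows all $p<\infty$; in the standard proof finite distortion is not handled by a separate construction at all, but falls out of the same differentiation estimate (where $J(x,\varphi)=0$ and the volume derivative is finite a.e., the estimate forces $D\varphi(x)=0$). In the sufficiency step the appeal to the Luzin $\N$-property is unnecessary: for a Sobolev homeomorphism the change-of-variables \emph{inequality} $\int_\Omega(u\circ\varphi)|J(x,\varphi)|\,dx\le\int_{\Omega'}u\,dy$ holds via approximate differentiability and the Banach indicatrix (Theorem~\ref{thm:change_var}), which is all the H\"older estimate needs; the genuinely delicate part of sufficiency, which your sketch passes over, is the passage from smooth $f$ to arbitrary (quasicontinuous representatives of) $f\in L^1_p(\Omega')$, and that is precisely the content of Theorem~5 of \cite{Vod2012}.
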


\begin{remark}
	Necessity is proved in~\cite{VodUhl1998, VodUhl2002} 
	(see also earlier work~\cite{Ukhlov1993}), 
	and sufficiency,
	in Theorem~5 of~\cite{Vod2012}.
\end{remark}

\begin{theorem}[\cite{Vod2012}] %(\cite[Theorem 6]{Vod2012})
\label{thm:Vod6}
	Assume that
	a~homeomorphism
	$\varphi:\Omega\to\Omega'$
	\begin{enumerate}
	\item
		induces a~bounded composition operator
		$\varphi^*:L^1_p(\Omega') \to L^1_q(\Omega)$
		for
		$n-1\leq q \leq p \leq \infty$,
		where
		$\varphi^*(f)=f\circ\varphi$
		for
		$f\in L^1_p(\Omega')$,
	\item
		has finite distortion for
		$n-1\leq q \leq p = \infty$.
	\end{enumerate}

	Then the inverse mapping
	$\varphi^{-1}$
	induces a~bounded composition operator
	$\varphi^{-1*}:L^1_{q'}(\Omega) \to L^1_{p'}(\Omega')$,
	where
	$q'=\frac{q}{q-n+1}$
	and
	$p'=\frac{p}{p-n+1}$,
	and has finite distortion.

	Moreover,
	\begin{equation*}
		\|\varphi^{-1*}\|\leq\|K_{\varphi^{-1},q'}(\cdot) \mid L_{\rho}(\Omega')\|
		\leq
		\|K_{\varphi,p}(\cdot) \mid L_{\varkappa}(\Omega)\|^{n-1},
		\text{ where } 
		\frac{1}{\rho}=\frac{1}{p'}-\frac{1}{q'}.
	\end{equation*}
\end{theorem}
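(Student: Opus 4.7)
The plan is to verify the three conditions appearing in Theorem~\ref{thm:Vod5} for the inverse homeomorphism $\varphi^{-1}$ with exponents $q'$ and $p'$: namely membership in $W^1_{p',\loc}(\Omega')$, finite distortion, and $L_\rho$-summability of the distortion function $K_{\varphi^{-1},q'}$. Once these are in hand, that theorem directly produces the bounded composition operator $\varphi^{-1*}:L^1_{q'}(\Omega)\to L^1_{p'}(\Omega')$ together with the upper bound $\|\varphi^{-1*}\|\leq\|K_{\varphi^{-1},q'}\mid L_\rho(\Omega')\|$, and the remaining task is to control this $L_\rho$-norm by $\|K_{\varphi,p}\mid L_\varkappa(\Omega)\|^{n-1}$.

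The central step is a pointwise-to-integral translation between the two distortion functions. By hypothesis $\varphi$ has finite distortion, so at almost every $x\in\Omega$ the differential $D\varphi(x)$ is invertible; at such a point the chain rule applied to $\varphi^{-1}\circ\varphi=\mathrm{id}$ yields simultaneously $D\varphi^{-1}(\varphi(x))=\Adj D\varphi(x)/J(x,\varphi)$ and $J(\varphi(x),\varphi^{-1})=1/J(x,\varphi)$. Combining these with the elementary matrix inequality $|\Adj A|\leq c_n|A|^{n-1}$ and using $1/q'=1-(n-1)/q$, I would obtain after a short algebraic rearrangement
\begin{equation*}
K_{\varphi^{-1},q'}(\varphi(x))\leq c_n\,K_{\varphi,p}(x)^{n-1}\,J(x,\varphi)^{(n-1)(1/p-1/q)}.
\end{equation*}

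I would then apply the change of variables $y=\varphi(x)$, which is legitimate because a bounded composition operator on $L^1_p$ forces Luzin's $\N^{-1}$-property by Theorem~\ref{thm4} and together with finite distortion yields the area formula for homeomorphisms. Raising the pointwise inequality to the power $\rho$ and integrating, the compatibility identity
\begin{equation*}
\frac{1}{\rho}=\frac{1}{p'}-\frac{1}{q'}=(n-1)\left(\frac{1}{q}-\frac{1}{p}\right)=\frac{n-1}{\varkappa}
\end{equation*}
makes the surplus Jacobian factors cancel exactly, leaving $\|K_{\varphi^{-1},q'}\mid L_\rho(\Omega')\|\leq c_n\|K_{\varphi,p}\mid L_\varkappa(\Omega)\|^{n-1}$. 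Finite distortion of $\varphi^{-1}$ is immediate from the pointwise estimate on $|D\varphi^{-1}|$ (it forces $D\varphi^{-1}=0$ on the zero set of $J(\cdot,\varphi^{-1})$), and local $W^1_{p'}$-regularity of $\varphi^{-1}$ follows from an analogous change-of-variables bound for $\int|D\varphi^{-1}|^{p'}$.

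The main obstacle is the measure-theoretic bookkeeping surrounding the change of variables: one has to isolate the full-measure subset of $\Omega$ on which $\varphi$ is differentiable with invertible derivative, use Luzin's $\N$-property to conclude that its complement still has image of zero measure in $\Omega'$, and combine with $\N^{-1}$ so that the area formula applies in both directions. The borderline case $p=\infty$ requires a parallel reading with $\varkappa=q$ and $1/p=0$, and there the finite distortion hypothesis must be imposed by hand rather than deduced from Theorem~\ref{thm:Vod5}.
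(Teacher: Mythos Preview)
The paper does not supply its own proof of this theorem: it appears in the Preliminaries section as a quoted result from \cite{Vod2012}, with no argument given. Consequently there is no in-paper proof to compare your attempt against.

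That said, your outline follows the standard route (and essentially the one used in \cite{Vod2012}): express $D\varphi^{-1}$ and $J(\cdot,\varphi^{-1})$ through the adjugate of $D\varphi$, obtain a pointwise bound on $K_{\varphi^{-1},q'}\circ\varphi$, and integrate via the area formula so that the exponent identity $\rho(n-1)=\varkappa$ absorbs the Jacobian. The algebra you record is correct.

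Two points deserve attention. First, your appeal to Theorem~\ref{thm4} for Luzin's $\N^{-1}$-property is limited there to $1\le q\le p\le n$, whereas the present statement allows $n<p\le\infty$; in that range the reference does not apply and one must invoke a different source for the measure-theoretic regularity. Second, the stated inequality carries no constant, so whether $|\Adj A|\le c_n|A|^{n-1}$ holds with $c_n=1$ depends on the matrix norm; with the operator norm it does, but this should be made explicit. More substantively, the change-of-variables step needs the $\N$-property of $\varphi$ (not only $\N^{-1}$), and for $W^1_{q,\loc}$-homeomorphisms with $q\ge n-1$ and finite distortion this is itself a nontrivial ingredient developed in \cite{Vod2012}; your sketch correctly flags this as the main obstacle but does not resolve it.
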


\begin{definition} 
	Consider
	$f: \Omega \to \R^n$
	and
	$D \subset \Omega$.
	The function 
	$N_f(\cdot, D): \R^n \to \mathbb{N} \cup \{\infty\}$
	defined as 
	\begin{equation*}
		N_f(y, D) =
		\textrm{card}(f^{-1}(y) \cap D)
	\end{equation*}
	is called the \textit{Banach indicatrix}.
\end{definition} 

\begin{theorem}[Change-of-variable formula{\cite[Theorem 2]{Haj1993}}]\label{thm:change_var}
	Given an~open set
	$\Omega \subset \R^n$,
	if a~mapping
	$f: \Omega \to \R^n$
	is approximatively differentiable almost everywhere on~%
	$\Omega$ 
	then we can redefine~%
	$f$
	on a~negligible set to gain Luzin 
	$\N$-property.

	If a~mapping
	$f: \Omega \to \R^n$
	is approximatively differentiable almost everywhere on~%
	$\Omega$
	and has Luzin $\N$-property
	then for every measurable function
	$u: \R^n \to \R$
	and every measurable set
	$D \subset \Omega$
	we have:
	\begin{enumerate}
	\item
		the functions
		$(u \circ f) (x) |J(x, f)|$
		and
		$u(y) N_f(y, D)$
		are measurable;
	\item
		if
		$u \geq 0$
		then 
		\begin{equation}\label{change_var}
			\int\limits_{D} (u \circ f) (x) |J(x, f)| \, dx
			= \int\limits_{\R^n} u (y) N_f(y, D) \, dy;
		\end{equation}
	\item
		if one of the functions
		$(u \circ f) (x) |J(x, f)|$
		and
		$u(y) N_f(y, D)$
		is integrable
		then so is the second,
		and the change-of-variable formula~\eqref{change_var} holds.
	\end{enumerate}
\end{theorem}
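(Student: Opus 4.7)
The plan is to reduce everything to the classical area formula for Lipschitz mappings (Federer) by decomposing $f$ into countably many Lipschitz pieces. The core tool is the Federer--Whitney theorem: if $f$ is approximately differentiable almost everywhere on $\Omega$, then one can write
$$
\Omega = Z \cup \bigcup_{k=1}^{\infty} E_k, \qquad |Z|=0,
$$
where the $E_k$ are measurable and $f|_{E_k}$ is Lipschitz. By Kirszbraun, each restriction extends to a globally Lipschitz map $g_k:\R^n\to\R^n$, and a standard density-point argument shows that the approximate differential of $f$ coincides almost everywhere on $E_k$ with the classical derivative of $g_k$, so $J(x,f)=J(x,g_k)$ a.e.\ on $E_k$. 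After disjointification we may assume the $E_k$ are pairwise disjoint.

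For the first assertion, I would redefine $f$ on $Z$ as a single constant. Then for any null set $N\subset\Omega$ one has $f(N)\subset\{c\}\cup\bigcup_k g_k(N\cap E_k)$; each $g_k(N\cap E_k)$ is null by Lipschitz continuity, and the union is null. Thus the redefined $f$ has Luzin $\N$-property.

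For item~(1), I would prove measurability of $N_f(\,\cdot\,,D)$ by observing that, outside the null set $f(Z\cap D)$,
$$
N_f(y,D)=\sum_{k=1}^{\infty} N_{g_k}(y,D\cap E_k),
$$
and each summand is measurable by the classical theory of Lipschitz maps. Multiplication by the measurable $u$ preserves measurability. For $(u\circ f)(x)|J(x,f)|$, I would further decompose each $E_k$ into countably many pieces on which $g_k$ is bi-Lipschitz (approximate inverse function theorem applied on the set where $J(\cdot,g_k)\neq 0$), so on these pieces $g_k^{-1}$ is Lipschitz and $u\circ f=u\circ g_k$ is measurable; on $\{J(x,f)=0\}$ the product vanishes identically.

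For item~(2) with $u\geq 0$, I would first take $u=\mathbf 1_A$ and apply the Lipschitz area formula to each $g_k$ on $D\cap E_k$:
$$
\int_{D\cap E_k}\mathbf 1_A(g_k(x))\,|J(x,g_k)|\,dx=\int_{\R^n}\mathbf 1_A(y)\,N_{g_k}(y,D\cap E_k)\,dy.
$$
Summing over $k$ and using $f=g_k$ and $J(\cdot,f)=J(\cdot,g_k)$ a.e.\ on $E_k$ gives the identity for characteristic functions; the $Z\cap D$ part contributes $0$ on the left because $|Z|=0$ and on the right because $|f(Z\cap D)|=0$ by Luzin $\N$-property. Linearity yields the formula for simple $u\geq 0$, and monotone convergence extends it to arbitrary measurable $u\geq 0$. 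Item~(3) then follows by splitting $u=u^+-u^-$ and applying item~(2) to each part, integrability of one side forcing integrability of the other.

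The main obstacle is the Federer--Whitney Lipschitz decomposition, whose proof requires combining the Whitney extension theorem with a density-point analysis of the set where the approximate differential is controlled; once this is in hand, the remainder is bookkeeping. A subtler technical point is handling $N_f(y,D)$ when it is infinite on a set of positive measure: one must verify that both sides of the identity are then simultaneously $+\infty$, which is again guaranteed by applying the Lipschitz area formula piece by piece.
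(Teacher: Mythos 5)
Your argument is correct, but note that the paper does not prove this statement at all: it is imported verbatim as Theorem~2 of~\cite{Haj1993}, so there is no internal proof to compare against. Your route --- the Federer--Whitney decomposition of an a.e.\ approximately differentiable map into countably many Lipschitz pieces, Kirszbraun extension, identification of the approximate differential with the derivative of the extension at density points, and then the Lipschitz area formula applied piecewise with the null set handled via the Luzin $\N$-property --- is exactly the standard argument used in the cited source, and your treatment of the measurability of $(u\circ f)|J(\cdot,f)|$ (bi-Lipschitz pieces off the zero set of the Jacobian) and of the possibly infinite multiplicity function is sound.
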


\begin{definition} 
	Consider a~continuous,  open, and discrete mapping 
	$f: \Omega \to \R^n$
	and a~point
	$x\in\Omega$.
	There exists the domain~%
	$V$
	containing~%
	$x$
	such that
	$\overline{V}\cup  f^{-1}(f(\{x\}))=\{x\}$.
	Refer as the {\it local index} of~%
	$f$
	at~%
	$x$
	to the quantity
	$i(x,f)=\mu(f(x),f,V)$.
\end{definition}

\begin{remark}
	The local index is well-defined and independent of the domain~%
	$V$
	under consideration
	\cite{Rickman1993, Resh1982}.
\end{remark}

\begin{definition} 
	Given a~continuous mapping
	$f: \Omega \to \R^n$,
	say that~%
	$f$
	is {\it sense preserving}
	whenever the  topological degree satisfies
	$\mu(y,f,D) > 0$
	for every domain
	$D \Subset \Omega$
	and every point 
	$y\in f(D) \setminus f(\partial \Omega)$.
\end{definition}

\begin{definition} 
	A~mapping
	$f: \Omega \to \R^n$
	is called a~{\it mapping with bounded
	$(p,q)$-distortion},
	with
	$n-1<q\leq p<\infty$,
	whenever
	\begin{enumerate}
	\item
		the mapping~%
		$f$
		is continuous,
		open,
		and discrete;
	\item
		the mapping~%
		$f$
		is of Sobolev class
		$W^1_{q,\loc}(\Omega)$;
	\item
		its Jacobian satisfies
		$J(x,f)\geq 0$
		for almost all
		$x\in \Omega$;
	\item
		the mapping~%
		$f$
		has finite distortion:
		$Df(x) = 0$
		if and only if
		${J(x,f) = 0}$
		with the possible exclusion of a~negligible set;
	\item
		the local distortion function
		$K_{f,p}(x)$
		belongs to 
		$L_\varkappa(\Omega)$ where
		$\frac{1}{\varkappa}=\frac{1}{q}-\frac{1}{p}$
		($\varkappa = \infty$
		for
		$p=q$).
	\end{enumerate}
\end{definition}

\begin{lemma} [\cite{BayVod2015}]\label{lem:orient}
	A~mapping
	$f: \Omega \to \R^n$
	with bounded 
	$(p,q)$-distortion
	is sense preserving
	in the case
	$q>n-1$.
\end{lemma}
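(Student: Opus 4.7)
The plan is to prove that $f$ is sense-preserving by showing every local index $i(x,f)$ is positive, since then for any domain $D \Subset \Omega$ and every $y \in f(D) \setminus f(\partial D)$,
\[
\mu(y, f, D) \;=\; \sum_{x \in f^{-1}(y) \cap D} i(x, f) \;>\; 0.
\]
The well-definedness of $i(x,f)$ as a nonzero integer only uses that $f$ is continuous, open, and discrete, which holds by hypothesis.

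\textbf{Step 1 (reduction to one point).} For light continuous open mappings of a connected domain in $\R^n$, the sign of $i(\cdot, f)$ is constant on $\Omega$. One way to see this: the branch set $B_f$ has topological codimension at least $2$ (Chernavskii), so $\Omega \setminus B_f$ remains connected; on $\Omega \setminus B_f$ the map $f$ is a local homeomorphism with $i = \pm 1$, and local constancy of the topological degree forces a single global sign there, which then extends to $B_f$ by a short continuity argument via additivity of the degree. It therefore suffices to exhibit a single point $x_0 \in \Omega$ with $i(x_0, f) > 0$.

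\textbf{Step 2 (density of $\{J > 0\}$).} Since $f \in W^1_{q,\loc}$, $f$ is approximately differentiable a.e., and $J(\cdot, f) \geq 0$ a.e. The finite-distortion hypothesis forces $Df = 0$ a.e.\ on $\{J = 0\}$. If $J$ vanished on an open ball $B \Subset \Omega$, then $Df \equiv 0$ a.e.\ on $B$, so $f$ would be constant on $B$ by continuity, contradicting openness of $f$. Hence $\{J(\cdot, f) > 0\}$ is dense, and one may pick $x_0$ in this set that is also a point of approximate differentiability, so $\det Df(x_0) > 0$.

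\textbf{Step 3 (blow-up).} I would compute $i(x_0, f) = +1$ by rescaling: set
\[
f_r(y) \;:=\; \frac{f(x_0 + r y) - f(x_0)}{r}.
\]
The integrability $K_{f,p} \in L_\varkappa$ and the composition-operator framework of \cite{Vod2012} (through Theorems~\ref{thm:Vod5} and~\ref{thm:Vod6}) furnish uniform $W^1_q$ bounds on the $f_r$ over a fixed reference ball. Reshetnyak's convergence theorem (Theorem~\ref{thm:Resh}) then yields, along a subsequence $r_k \to 0$, convergence of $f_{r_k}$ in $L_1$ to the linear map $L(y) = Df(x_0)\,y$ and convergence of Jacobians in the distributional sense; since $\det Df(x_0) > 0$, the associated local degrees converge to $\deg L = +1$, so $i(x_0, f) = +1$. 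Combined with Step~1, this gives $i(x, f) > 0$ for every $x \in \Omega$, and the degree formula above finishes the proof.

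The main obstacle is Step~3. For $q \geq n$ (the classical quasiregular case) it would follow from a direct application of Reshetnyak's theorem, but in the broader range $n-1 < q < n$ permitted here one loses standard $W^1_n$ compactness, and the argument must rely essentially on the integrability of $K_{f,p}$ and the composition-operator machinery of \cite{Vod2012} to extract enough regularity of the $f_r$ to pass to the limit in both Jacobian and degree. Steps~1 and~2 are comparatively routine.
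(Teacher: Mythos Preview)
Your Steps~1 and~2 are essentially correct and parallel the paper's reasoning. The gap is Step~3, which you yourself flag as the obstacle: Reshetnyak's convergence theorem (Theorem~\ref{thm:Resh}) requires $W^1_{n,\loc}$, not $W^1_{q,\loc}$, and in the range $n-1<q<n$ the blow-ups $f_r$ need not be bounded in $W^1_n$. Invoking the composition-operator machinery to manufacture the missing integrability is plausible-sounding but you have not actually carried it out, and it is not clear it works at an \emph{approximate} differentiability point without further argument.

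The paper bypasses this entirely, and the bypass is worth knowing. Rather than blowing up at an approximately differentiable point, the paper first restricts to the complement of the branch set, where $f$ is a local homeomorphism, and then quotes a result (\cite[Proposition~1]{Vod2000}) guaranteeing for such a homeomorphism a point of \emph{classical} differentiability with nondegenerate differential and positive Jacobian. At a point of classical differentiability with $\det Df(x_0)>0$ the local index is $+1$ by elementary degree theory --- no blow-up or compactness argument is needed. Since the degree $\mu(\cdot,f,D)$ is constant on each connected component of $f(D)\setminus f(\partial D)$ and the image of the branch set is closed in $f(\Omega)$, every such component contains regular values, which is enough to conclude $\mu>0$ everywhere.

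In short: replace ``approximate differentiability $+$ blow-up'' by ``local homeomorphism off the branch set $+$ one point of classical differentiability''. This is both shorter and avoids the $W^1_n$-compactness issue you identified.
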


\begin{proof}
Indeed,
if~%
$f$
is a~homeomorphism
then
$J(x,f) = 0$
cannot hold almost everywhere on~%
$\Omega$
because this would imply that
$Df(x) = 0$
almost everywhere,
and so~%
$f$
would not be an~open mapping.
Consequently,
$J(x,f) > 0$
on a~set of positive measure
and there exists a~point
$x \in \Omega$
of differentiability of~%
$f$
at which the differential is nondegenerate,
while the~Jacobian is positive
(see~\cite[Proposition 1]{Vod2000}  for instance).
The properties of degrees of mappings imply that~%
$f$
is sense preserving.

The general case reduces to the previous one
since the image of the set of branch points
is closed in
$f(\Omega)$.
Indeed,
take
$D \Subset \Omega$
avoiding the branch points
and a~point~%
$z$
in a~connected component
$U_1$
of the set 
$f(D)\setminus f(\partial D)$.
Put
$D_1 = f^{-1}(U_1) \subset D$.
Since
$D_1$
is an~open set,
the Jacobian
$J(\cdot,f)$
cannot vanish almost everywhere on it
(otherwise,
%the open set
$D_1$
would map to a~point in contradiction with the openness of~%
$f$).
Hence,
$J(x,f) > 0$
on a~set of positive measure.
For a~point
$x_0\in D_1$
in this set
we have
$y=f(x_0) \in U_1$.
Consider
$f^{-1}(y) \cap D_1 = \{x_0, x_1, \dots x_N\}$.
Since~%the mapping
$f$
is a~discrete mapping,
there exists a~ball
$B(y,r)$
of small radius~%
$r$
with %such that 
$f^{-1}(B(y,r)) = \cup W_j$,
where
$x_j \in W_j$
and
$W_i \cap W_k = \emptyset$.
Since
$f:W_j \to B(y,r)$
is a~homeomorphism,
the~local index
$i(x_j,f)=\mu(y,f,W_j)$
is positive, 
while the degree satisfies
\begin{equation*}
	\mu(x,f,D_1) = \sum_{j=1}^{N} i(x_j,f) = 
	\sum_{j=1}^{N}\mu(y,f,B_j) >0,
\end{equation*}
see Proposition~4.4 of~\cite{Rickman1993}.

Now take
$D \Subset \Omega$
intersecting the set~%
$V$
of branch points
and suppose that
the image of a~branch point
$z$
lies in the connected component
$U_1$
of %the set
$f(D)\setminus f(\partial D)$.
Since the image of the set of branch points is closed,
there must be points in
$U_1$
which are outside of
$f(V)$.
Applying the previous argument to a~point
$z_1\in U_1 \setminus f(V)$,
we infer that
$\mu(x,f,D_1)>0$.
Thus,
the~mapping~%
$f$
is sense preserving.
\end{proof}

\section{The Main Result}\label{sec:main}
\subsection{Existence theorem}\label{subsec:exist}

Consider two bounded domains
$\Omega$, 
$\Omega'\subset \R^n$
with locally Lipschitz boundaries
$\partial\Omega=\Gamma$
and
$\partial\Omega'=\Gamma'$.
Consider the functional 
\begin{equation*}
	I(\varphi)=\int\limits_{\Omega}W(x, D\varphi(x))\,dx,
\end{equation*}
where
$W: \Omega \times \M^n \to \R$
is a~stored-energy function
with the following properties:

{\bf (a)  polyconvexity:}
	there exists a~convex function
	$G(x, \cdot): \M^n\times \M^n \times \R_{\geq 0}
	\to \R$,
	satisfying Carath\'erode conditions,
	such that
	for all
	$F\in \M^n_{\geq 0}$
	the~equality
	\begin{equation*}
		G(x, F, \Adj F, \det F)=W(x,F)
	\end{equation*}
	holds almost everywhere in~%
	$\Omega$;

{\bf (b) coercivity:}
	there exist constants
	$\alpha>0$
	and
	$r>1$
	as well as a~function
	$g\in L_1(\Omega)$
	such that
	\begin{equation}\label{neq:coer}
		W(x,F)\geq \alpha (\|F\|^n+ (\det F)^r) + g(x)
	\end{equation}
	for almost all
	$x\in \Omega$
	and all
	$F\in \M^n_{\geq 0}$.

Given
$\overline{\varphi}: \Omega \to\Omega'$
with
$\overline{\varphi}\in W^1_n(\Omega)$
and a~measurable function
${M: \Omega \to\R}$,
define the class of admissible deformations
\begin{multline*}
	\A=\{\varphi \in W^1_1(\Omega) \cap FD (\Omega),\: I(\varphi) < \infty, \:
	\frac{|D\varphi(x)|^{n}}{J(x,\varphi)} \leq M(x) \in L_{s} (\Omega), \\
	s>n-1, \: \varphi|_\Gamma=\overline{\varphi}|_\Gamma
	\text{ a.~e.~in } \Gamma,
	\: J(x,\varphi) \geq 0 
	\text{ a.~e.~in } \Omega \}.
\end{multline*}

\begin{remark}
	Here we understand
	$\psi|_\Gamma=\overline{\varphi}|_\Gamma$
	in the sense of traces,
	that is,
	$\psi - \overline{\varphi} \in \overset{\circ}{W^1_n}(\Omega)$.
\end{remark}

\begin{theorem}[Existence theorem] \label{thm:main}
	Suppose that:
	\begin{enumerate}
	\item 
		conditions {\bf (a)}~and~{\bf (b)}
		on the function
		$W(x,F)$
		are fulfilled;
	\item 
		$\overline{\varphi}: \overline{\Omega} \to \overline{\Omega'}$
		is a~homeomorphism;
	\item 
		the set~%
		$\A$
		is nonempty.
	\end{enumerate}

	Then there exists at least one mapping
	$\varphi_0\in\A$
	such that
	\begin{equation*}
		I(\varphi_0)=\inf\limits_{\varphi\in\A} I(\varphi).
	\end{equation*}
	Moreover,
	$\varphi_0: \overline{\Omega} \to \overline{\Omega'}$
	is a~homeomorphism.
\end{theorem}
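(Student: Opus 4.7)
The approach follows the direct method of the calculus of variations, adapted to the $W^1_n$ setting: extract a weakly convergent minimizing subsequence, verify its limit is admissible (the new point), invoke polyconvex lower semicontinuity, and upgrade the minimizer to a homeomorphism using the distortion estimate and the regularity theorems for mappings with bounded distortion.

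\textit{Step 1 (Extraction).} Let $\{\varphi_k\} \subset \A$ satisfy $I(\varphi_k) \to \inf_\A I$. Coercivity \eqref{neq:coer} uniformly bounds $\|D\varphi_k\|_{L_n(\Omega)}$ and $\|J(\cdot,\varphi_k)\|_{L_r(\Omega)}$ with $r>1$. Combined with the boundary match $\varphi_k|_\Gamma = \overline\varphi|_\Gamma$, Theorem~\ref{thm:Poin} yields a uniform $W^1_n$-bound, so after passing to a subsequence $\varphi_k \rightharpoonup \varphi_0$ weakly in $W^1_n(\Omega)$ and (by Theorem~\ref{thm:embedding}(2)) strongly in every $L_q(\Omega)$. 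The adjugates obey $|\Adj D\varphi_k| \leq C|D\varphi_k|^{n-1}$ and are therefore bounded in $L_{n/(n-1)}$. Using the divergence-free columns of $\Adj$ (Lemma~\ref{lem:free_div}), Theorem~\ref{thm:Resh}, and the $L_r$ control of Jacobians, one identifies
\begin{equation*}
\Adj D\varphi_k \rightharpoonup \Adj D\varphi_0 \text{ in } L_{n/(n-1)},\qquad J(\cdot,\varphi_k) \rightharpoonup J(\cdot,\varphi_0) \text{ in } L_r.
\end{equation*}

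\textit{Step 2 (Admissibility of $\varphi_0$).} Compactness of the trace (Theorem~\ref{thm:trace}(2)) preserves the Dirichlet condition. Weak closure of the nonnegative $L_r$-cone gives $J(\cdot,\varphi_0) \geq 0$ a.e. The finite-distortion bound is delicate. For each $N \in \mathbb{N}$, integrating $|D\varphi_k|^n \leq M(x) J(x,\varphi_k)$ against an arbitrary nonnegative $\phi \in C^\infty_0(\Omega)$ supported in $\{M \leq N\}$ gives
\begin{equation*}
\int_\Omega |D\varphi_0|^n \phi \, dx \leq \varliminf_k \int_\Omega |D\varphi_k|^n \phi \, dx \leq \lim_k \int_\Omega M(x) J(x,\varphi_k) \phi \, dx = \int_\Omega M(x) J(x,\varphi_0) \phi \, dx,
\end{equation*}
where the first inequality is weak $W^1_n$-lower semicontinuity of the convex functional $\varphi \mapsto \int |D\varphi|^n \phi$, and the equality uses $M\phi \in L^\infty$ as a pairing against the weak $L_r$-convergence of Jacobians. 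Letting $N \to \infty$ and $\phi$ run over a countable dense family produces the pointwise estimate $|D\varphi_0(x)|^n \leq M(x) J(x,\varphi_0)$ a.e.; hence $\varphi_0 \in FD(\Omega)$ and the classical distortion lies in $L_s(\Omega)$.

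\textit{Step 3 (Lower semicontinuity and homeomorphism).} Polyconvexity writes $W(x,F) = G(x,F,\Adj F,\det F)$ with $G$ Carath\'eodory and convex in the last three arguments. Apply Mazur's theorem (Theorem~\ref{thm:Mazur}) to the weakly convergent triples $(D\varphi_k,\Adj D\varphi_k,J(\cdot,\varphi_k))$ to obtain strongly (and, after extraction, a.e.) convergent convex combinations; Fatou's lemma, justified by the $L_1$ lower bound $g(x)$ in \eqref{neq:coer}, then yields $I(\varphi_0) \leq \varliminf_k I(\varphi_k) = \inf_\A I$. Since the distortion of $\varphi_0$ lies in $L_s$ with $s > n-1$ and $J(\cdot,\varphi_0) \geq 0$, Theorem~\ref{thm:ManVill} makes $\varphi_0$ continuous, discrete, and open; setting $q = ns/(s+1) > n-1$ shows $\varphi_0$ has bounded $(n,q)$-distortion, so Lemma~\ref{lem:orient} makes it sense-preserving. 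Finally the boundary trace now holds pointwise, and a standard topological-degree argument — $\mu(y,\varphi_0,\Omega) = \mu(y,\overline\varphi,\Omega) = 1$ for $y \in \Omega'$, while local indices at preimage points are $\geq 1$ by sense-preservation — forces every fibre $\varphi_0^{-1}(y)$ to be a singleton, so that $\varphi_0: \overline\Omega \to \overline{\Omega'}$ is a homeomorphism.

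\textit{Main obstacle.} The crux is Step 2: transferring the finite-distortion bound through a weak $W^1_n$-limit cannot appeal to the familiar $W^1_p \hookrightarrow C(\overline\Omega)$ embedding with $p>n$ that underwrites Ball's argument, and must instead be engineered from weak lower semicontinuity of $\int|D\varphi|^n\phi\,dx$, weak $L_r$-convergence of the Jacobian, and a truncation of the unbounded coefficient $M$.
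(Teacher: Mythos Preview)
Your proposal is correct in outline and shares the paper's direct-method skeleton, but you reach admissibility of the limit by a noticeably shorter route. The paper first proves (Lemma~\ref{lem:homeo}) that every $\varphi_k \in \A$ is already a homeomorphism, then works with the inverses $\psi_k = \varphi_k^{-1}$: it shows they converge uniformly via capacity metrics (Lemma~\ref{lem:prop2}), establishes that $\varphi_0$ induces a bounded composition operator $L^1_n(\Omega') \to L^1_q(\Omega)$ (Lemma~\ref{lem:bounded_comp}), and uses this operator machinery to obtain almost-everywhere injectivity, the Luzin $\N^{-1}$-property, and $J(\cdot,\varphi_0) > 0$ a.e.\ (Lemmas~\ref{lem:Phi}--\ref{lem:J>0}). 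Only then does it invoke Theorem~\ref{thm:ManVill} and the degree argument. You bypass all of this by proving the pointwise bound $|D\varphi_0|^n \leq M\,J(\cdot,\varphi_0)$ directly, which already yields $\varphi_0 \in FD$ with $K \in L_s$, and then appealing to Theorem~\ref{thm:ManVill}. The paper in fact proves this same pointwise bound as its final admissibility step (the last lemma of \S\ref{subsec:prop_K}, via H\"older with exponent $\frac{ns}{s+1}$ rather than your truncation), so the composition-operator detour is, for the theorem as stated, not strictly necessary --- though it delivers the stronger conclusion $J>0$ a.e.\ and ties into the authors' broader programme in~\cite{Vod2012}.

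Two points in your writeup need tightening. First, ``$\phi \in C^\infty_0(\Omega)$ supported in $\{M \leq N\}$'' is ill-posed, since $\{M \leq N\}$ need not have interior; test instead against $\chi_E$ for measurable $E \subset \{M \leq N\}$ --- weak lower semicontinuity of $v \mapsto \int_E |v|^n$ on $L_n$ and the pairing $M\chi_E \in L^\infty \subset L_{r'}$ both survive this change. Second, and more substantively, ``the boundary trace now holds pointwise'' is not free: Theorem~\ref{thm:ManVill} gives only interior continuity, whereas your degree comparison $\mu(y,\varphi_0,\Omega) = \mu(y,\overline\varphi,\Omega)$ requires $\varphi_0 \in C(\overline\Omega)$ with $\varphi_0|_\Gamma = \overline\varphi|_\Gamma$ pointwise. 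The paper's Lemma~\ref{lem:homeo} fills this gap by symmetrizing $\varphi_0$ across the Lipschitz boundary via the charts $\nu_j,\mu_k$ and applying Theorem~\ref{thm:ManVill} to the reflected map on a two-sided neighbourhood; you should incorporate that step.
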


\subsection{Proof of the existence theorem}\label{subsec:proof}

We subdivide the proof of the existence theorem into three steps.
On the first step (see subsection~\ref{subsec:existence})
we establish that
the weak limit of a~minimizing sequence exists.
On the second step we investigate
the main properties of the~mappings of class~%
$\A$
(subsection~\ref{subsec:prop_phi})
and the limit mapping
$\varphi_0$
(subsections~\ref{subsec:J>0}--%
% \ref{subsec:phi_0_Gamma}, \ref{subsec:bounded_comp}, \ref{subsec:inj_phi},
\ref{subsec:prop_K}),
as well as verify that
$\varphi_0$
belongs to the class~%
$\A$
of admissible deformations.
This is a~key step 
since we introduce a~new class of admissible deformations,
and consequently,
the verification of containment in it
differs substantially from previous works.
Our proof uses both classical theorems of functional analysis
and properties of mappings with finite distortion
obtained quite recently~\cite{Vod2012}.
Finally,
on the third step it remains to show that
the mapping found is actually a~solution to the minimization problem,
which requires proving that
the energy functional is lower semicontinuous
(subsection~\ref{subsec:semicont}).

\subsubsection{Existence of a~minimizing mapping.}\label{subsec:existence}

Let us prove the existence of a~minimizing mapping
for the functional 
\begin{equation*}
	\overline{I}(\varphi)=I(\varphi)- \int\limits_\Omega g(x)\, dx.
\end{equation*}

\begin{lemma}\label{lem:AdjJ}
	If
	$\Omega$
	is a~domain in
	$\R^n$
	with locally Lipschitz boundary
	and
	$\varphi_0$, 
	$\varphi_k\in W^1_n(\Omega)$
	then
	$M^{p}_{q} (D\varphi_0)\in L_{\frac{n}{m}}(\Omega)$,
	where
	$p = (p_1, p_2, \dots, p_m)$
	and
	$q =  (q_1, q_2, \dots, q_m)$,
	while
	$M^{p}_{q} (D\varphi_0)$
	is the minor of the matrix
	$D \varphi_0$
	of size~%
	$m$,
	with
	$1\leq m \leq n$,
	consisting of the entries %of the form
	$\dfrac{\partial (\varphi_0)_{p_i}}{\partial x_{q_j}}$
	{\rm (}that is,
	of the rows
	$p_1, p_2, \dots p_m$
	and columns
	$q_1, q_2, \dots q_m${\rm )},
	and if
	$\varphi_k\rightarrow \varphi_0$
	weakly in
	$W^1_n(\Omega)$,
	while
	$M^{p}_{q} (D\varphi_k) \rightarrow H^p_q$
	weakly in
	$L_{\frac{n}{m}}(\Omega)$
	for all~%
	$m$
	with
	$1\leq m \leq n$,
	then
	$H^p_q=M^p_q (D\varphi)$
	for all
	$m$.
\end{lemma}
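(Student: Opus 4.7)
The plan is to argue by induction on the order $m$ of the minor. The integrability assertion $M^p_q(D\varphi_0)\in L_{n/m}(\Omega)$ will follow at once from Hadamard's inequality $|M^p_q(D\varphi_0)|\leq |D\varphi_0|^m$ together with $|D\varphi_0|\in L_n(\Omega)$. The base case $m=1$ is immediate: a single entry $\partial\varphi_{k,p_1}/\partial x_{q_1}$ converges weakly in $L_n(\Omega)$ to the corresponding entry of $D\varphi_0$ as part of the hypothesis $\varphi_k\to\varphi_0$ weakly in $W^1_n(\Omega)$.

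For the inductive step I will use the distributional identity
\begin{equation*}
M^p_q(D\varphi)=\sum_{j=1}^{m}(-1)^{j+1}\frac{\partial}{\partial x_{q_j}}\bigl(\varphi_{p_1}\cdot M^{p\setminus\{p_1\}}_{q\setminus\{q_j\}}(D\varphi)\bigr),
\end{equation*}
valid for every $\varphi\in W^1_n(\Omega)$ with $m\leq n$. It results from the cofactor expansion of $M^p_q$ along its first row, once the Leibniz rule is coupled with the Piola-type identity
\begin{equation*}
\sum_{j=1}^{m}(-1)^{j+1}\frac{\partial}{\partial x_{q_j}}M^{p\setminus\{p_1\}}_{q\setminus\{q_j\}}(D\varphi)=0,
\end{equation*}
obtained from Lemma~\ref{lem:free_div} applied on a.e.\ $m$-dimensional slice (fixing the variables $x_i$ for $i\notin\{q_1,\dots,q_m\}$) to the auxiliary $m$-component map $(\varphi_{p_1},\dots,\varphi_{p_m})$; the hypothesis of Lemma~\ref{lem:free_div} reduces here to $n>m-1$, which holds because $m\leq n$.

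I will then test the identity against an arbitrary $\zeta\in C_0^\infty(\Omega)$, integrate by parts, and pass to the limit in $k$. By Rellich--Kondrachov (Theorem~\ref{thm:embedding}, item~2), $\varphi_{k,p_1}\to\varphi_{0,p_1}$ strongly in $L_{n/(n-m+1)}(\Omega)$; by the inductive hypothesis combined with the weak convergence assumed in the lemma, $M^{p\setminus\{p_1\}}_{q\setminus\{q_j\}}(D\varphi_k)\to M^{p\setminus\{p_1\}}_{q\setminus\{q_j\}}(D\varphi_0)$ weakly in $L_{n/(m-1)}(\Omega)$. Since the exponents $n/(n-m+1)$ and $n/(m-1)$ are conjugate, Theorem~\ref{thm:conv} will drive the right-hand side to the analogous expression for $\varphi_0$, which re-assembles by the same identity into $\int_\Omega M^p_q(D\varphi_0)\,\zeta\,dx$, while the left-hand side tends to $\int_\Omega H^p_q\,\zeta\,dx$ by the assumed weak convergence in $L_{n/m}(\Omega)$; since $\zeta$ was arbitrary, $H^p_q=M^p_q(D\varphi_0)$ almost everywhere.

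The main obstacle is the distributional validity of the Piola-type identity for sub-Jacobians of a $W^1_n$-map, that is, the reduction to Lemma~\ref{lem:free_div} sketched above. Once that is in place, everything else is mere bookkeeping of Hölder exponents, the arithmetic relation $(n-m+1)+(m-1)=n$ ensuring that every product is integrable.
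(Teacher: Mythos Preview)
Your proposal is correct and follows essentially the same route as the paper: induction on $m$, the distributional identity obtained from the cofactor expansion together with the Piola identity (the paper's Lemma~\ref{lem:free_div}), integration by parts against a test function, and passage to the limit via strong convergence of $\varphi_{k,p_1}$ (Rellich--Kondrachov) paired with weak convergence of the $(m-1)$-minors through Theorem~\ref{thm:conv}. The only cosmetic difference is the H\"older bookkeeping: you pair the conjugate exponents $n/(n-m+1)$ and $n/(m-1)$ and leave $\partial\zeta$ in $L_\infty$, whereas the paper keeps $\varphi_{k,p_1}$ in $L_n$ and splits three ways; your slicing justification of the Piola identity for sub-minors is also a touch more explicit than the paper's appeal to Lemma~\ref{lem:free_div}.
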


\begin{proof}
Since
$D\varphi_0\in L_n(\Omega)$,
applying H\"older's inequality and the boundedness of~% the domain
$\Omega$,
we easily verify that 
$M^{p}_{q} (D\varphi_0)\in L_{\frac{n}{m}}(\Omega)$.

Further we prove by  induction on~%
$m$.
In the case
$m=1$
the assertion %of the theorem
follows directly from the definition of
$\varphi_k$, 
$\varphi_0 \in W^1_n(\Omega)$.

For sufficiently smooth functions
($C^n(\Omega)$)
expanding the determinant along the first row yields
\begin{multline*}
	M^p_q (D f) = \sum \limits_{j=1} \limits^{n} (-1)^{j-1} 
	\dfrac{\partial f_{p_1} }{\partial x_{q_j}} M^{\hat{p}_1}_{\hat{q}_j} 
	=
	\sum \limits_{j=1} \limits^{n} (-1)^{j-1} \dfrac{\partial}{\partial x_{q_j}}
	\Bigl( f_{p_1} M^{\hat{p}_1}_{\hat{q}_j} \Bigr)
	\\ + 
	f_{p_1} \sum \limits_{j=1} \limits^{n} (-1)^{j-1} 
	\dfrac{\partial}{\partial x_{q_j}} 
	M^{\hat{p}_1}_{\hat{q}_j}=
	\sum \limits_{j=1} \limits^{n} (-1)^{j-1} \dfrac{\partial}{\partial x_{q_j}}
	\Bigl( f_{p_1} M^{\hat{p}_1}_{\hat{q}_j} \Bigr),
\end{multline*}
where
$\hat{q}_j = (q_1, q_2, \dots q_{j-1}, q_{j+1}, \dots q_m)$.
The second term in the right-hand side
vanishes not only for smooth functions
(which can be verified directly),
but also for the functions of class
$L_{\frac{n}{m}} (\Omega)$
(Lemma~\ref{lem:free_div}).

Given a~test function
$\theta \in \mathcal{D} (\Omega)$,
for
$\xi \in W^1_{n}(\Omega)$
and
$\eta\in L_{\frac{n}{m-1}}(\Omega)$
the~bilinear mapping
\begin{equation*}
	(\xi, \eta) \mapsto \int \limits_{\Omega} \xi M^{\hat{p}_1}_{\hat{q}_j} (D \eta) 
	\frac{\partial \theta}{\partial x_{q_j}} \, dx
\end{equation*}
is continuous by H\"older's inequality.
Indeed, 
\begin{multline*}
	\left| \int \limits_{\Omega} \xi M^{\hat{p}_1}_{\hat{q}_j} (D \eta) 
	\frac{\partial \theta}{\partial x_{q_j}} \, dx \right| \leq \| \xi \|_{n}  
	\| M^{\hat{p}_1}_{\hat{q}_j} (D \eta) \|_{\frac{n}{m}} \left\| 
	\frac{\partial \theta}{\partial x_{q_j}} \right\|_{\frac{n-1}{n-m-1}}  
	\\ \leq 
	C \| \xi \|_{n}  \| M^{\hat{p}_1}_{\hat{q}_j} (D \eta) \|_{\frac{n}{m}}.
\end{multline*}

Since the embedding of
$W^1_n (\Omega)$
into
$L_p(\Omega)$
is compact,
extracting a~subsequence if necessary,
we may assume that
the sequence
$\varphi_k$
converges strongly in
$L_p(\Omega)$,
while the sequence
$M^{\hat{p}_1}_{\hat{q}_j} (D \eta)$
converges weakly in 
$L_{\frac{n}{m-1}}(\Omega)$
by the~inductive assumption.
Consequently,
Theorem~\ref{thm:conv} yields the convergence
\begin{equation*}
	\int \limits_{\Omega} (\varphi_k)_{p_1} M^{\hat{p}_1}_{\hat{q}_j} (D \varphi_k) 
	\frac{\partial \theta}{\partial x_{q_j}} \,dx 
	\xrightarrow[k \rightarrow \infty]{}  \int \limits_{\Omega} 
	(\varphi_0)_{p_1} M^{\hat{p}_1}_{\hat{q}_j} (D \varphi_0) 
	\frac{\partial \theta}{\partial x_{q_j}}\,dx.
\end{equation*}
Therefore,
\begin{align*}
	\int \limits_{\Omega} M^{p}_{q} (D \varphi_k) \theta \,dx
	& =
	\sum \limits_{j=1} \limits^{n} (-1)^{j-1} \int \limits_{\Omega} 
	\dfrac{\partial}{\partial x_{q_j}} \Bigl( (\varphi_k)_{p_1} 
	M^{\hat{p}_1}_{\hat{q}_j} (D \varphi_k) \Bigr) \theta \,dx
	\\ & = 
	\sum \limits_{j=1} \limits^{n} (-1)^{j}\int \limits_{\Omega} 
	(\varphi_k)_{p_1} M^{\hat{p}_1}_{\hat{q}_j} (D \varphi_k)
	\frac{\partial \theta}{\partial x_{q_j}} \,dx
	\\ & 
	\xrightarrow[k \rightarrow \infty]{} 
	\sum \limits_{j=1} \limits^{n} (-1)^{j}\int \limits_{\Omega} 
	(\varphi_0)_{p_1} M^{\hat{p}_1}_{\hat{q}_j} (D \varphi_0) 
	\frac{\partial \theta}{\partial x_{q_j}}\,dx 
	\\ & = 
	\int \limits_{\Omega} M^{p}_{q} (D \varphi_0) \theta \,dx,
\end{align*}
which completes the proof of the lemma.
\end{proof}

Observe now that
the coercivity~\eqref{neq:coer}
 of the function~%
$W$,
Lemma~\ref{lem:AdjJ},
and Poincar\'e inequality
(Theorem~\ref{thm:Poin})
ensure the existence of constants
$c>0$
and
$d\in\R$
such that
%the inequality
\begin{multline}\label{est:lower}
	\overline{I}(\varphi) = I(\varphi)- \int\limits_\Omega g(x)\, dx  \\
	\geq
	c\biggl(\|\varphi\mid W^1_n(\Omega)\|^n+\|\Adj D\varphi \mid
	L_{\frac{n}{n-1}}(\Omega)\|^{\frac{n}{n-1}} +\|J(\cdot,\varphi) 
	\mid L_r(\Omega)\|^r\biggr)\\
	+ d
\end{multline}
%holds
for every mapping
$\varphi \in \A$.

Take a~minimizing sequence
$\{\varphi_k\}$
for the functional~%
$\overline{I}$.
Then
\begin{equation*}
	\lim\limits_{k\rightarrow \infty} \overline{I}(\varphi_k)
	= \inf\limits_{\varphi\in\A} \overline{I}(\varphi).
\end{equation*}
By~\eqref{est:lower} and the assumption
$\inf\limits_{\varphi\in\A} \overline{I} (\varphi)<\infty$
we conclude that
the sequence
$(\varphi_k,\: \Adj D\varphi_k,\:J(\varphi_k))$
is bounded in the reflexive Banach space
$W^1_n(\Omega)\times L_{\frac{n}{n-1}}(\Omega)\times L_r(\Omega)$.
Consequently,
there exists a~subsequence
(which we also denote by
$\{(\varphi_k,\: \Adj D\varphi_k,\:J(\varphi_k))\}_{k\in \mathbb{N}}$)
weakly converging to an~element
$(\varphi_0,\: H,\: \delta)\in W^1_n(\Omega)\times L_{\frac{n}{n-1}}(\Omega)\times L_r(\Omega)$,
and furthermore, 
$H=\Adj D\varphi_0$
and
$\delta = J(\cdot,\varphi_0)$
by Lemma~\ref{lem:AdjJ}. 
Hence,
there exists a~minimizing sequence satisfying the conditions
\begin{equation}\label{weak_lim}
\begin{cases}
	\varphi_k \longrightarrow \varphi_0 &
	\text{ weakly in }  W^1_n(\Omega),
	\\
	\Adj D\varphi_k \longrightarrow \Adj D\varphi_0 &
	\text{ weakly in }  
	L_{\frac{n}{n-1}}(\Omega),
	\\
	J (\cdot,\varphi_k) \longrightarrow J (\cdot,\varphi_0) &
	\text{ weakly in } L_r(\Omega)
\end{cases}
\end{equation}
as
$k \rightarrow \infty$,
where
$\varphi_0$
guarantees the sharp lower bound 
$\overline{I} (\varphi_0)=\inf\limits_{\varphi\in\A} \overline{I} (\varphi)$.
It remains to verify that
$\varphi_0 \in \A$.
To this end,
we need the properties of mappings of class~%
$\A$
presented in the next subsection.

\subsubsection{Properties of admissible deformations
$\varphi\in\A$.} \label{subsec:prop_phi}

Let us state some properties of mappings of class~%
$\A$.

\begin{remark}\label{zam:phi} 
	If
	$\varphi \in \A$
	is a~homeomorphism
	then by Theorem~\ref{thm:Vod5}
	it induces a~bounded composition operator
	$\varphi^*: L^1_n(\Omega') \to L^1_q(\Omega)$,
	where
	$q=\frac{ns}{s+1}$
	and
	$\varphi^*(f)=f\circ \varphi$.
	Furthermore,
	we have the estimate
	\begin{equation}\label{est:thmV5}
   		\|\varphi^*\|\leq\|K_{\varphi,n}(\cdot) \mid L_{ns}(\Omega)\|
		\leq C \|\varphi^*\|
	\end{equation}
	with some constant~%
	$C$.
\end{remark}

\begin{remark}\label{zam:psi}
	If
	$\varphi \in \A$
	is a~homeomorphism
	then by Theorem~{\ref{thm:Vod6}} and {remark~\ref{zam:phi}}
	the inverse mapping
	$\psi = \varphi^{-1}$
	induces a~bounded operator
	$\psi^*: L^1_{q'}(\Omega) \to L^1_n(\Omega')$,
	where
	$q'=\frac{ns}{s-n+1}$,
	and %we have the estimate
	\begin{equation}\label{est:thmV6}
		\|\psi^*\| \leq \|K_{\psi,q'}(\cdot) \mid L_{\varrho}(\Omega')\|
		\leq \|K_{\varphi,n}(\cdot) \mid L_{ns}(\Omega)\|^{n-1},
	\end{equation}
	where
	$\frac{1}{\varrho}=\frac{1}{n}-\frac{1}{q'}=\frac{n-1}{ns}$.
\end{remark}

\begin{lemma}\label{lem:homeo}
	If
	$\overline{\varphi}: \overline{\Omega} \to \overline{\Omega'}$
	is a~homeomorphism
	then so is
	$\varphi:\overline{\Omega}\to\overline{\Omega'}$
	with
	$\varphi|_{\Omega}\in\A$.
\end{lemma}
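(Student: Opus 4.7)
The plan is to combine the interior theory of mappings with finite distortion with a topological-degree argument anchored by the boundary hypothesis. Specifically, I want to first upgrade regularity so that Theorem~\ref{thm:ManVill} applies, then use it to control the topological behavior of $\varphi$, and finally identify $\varphi$ with $\overline{\varphi}$ at the level of boundary correspondence and mapping degree.

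First I would promote $\varphi$ to $W^1_n(\Omega)$: the coercivity bound~\eqref{neq:coer} forces $\alpha\int_\Omega\|D\varphi\|^n\,dx\le I(\varphi)-\int_\Omega g\,dx<\infty$, and since $\varphi-\overline{\varphi}\in\overset{\circ}{W^1_n}(\Omega)$ with $\overline{\varphi}\in W^1_n(\Omega)$, Theorem~\ref{thm:Poin} applied to $\varphi-\overline{\varphi}$ yields $\varphi\in W^1_n(\Omega)$. The admissibility condition $|D\varphi|^n/J(\cdot,\varphi)\le M\in L_s(\Omega)$ with $s>n-1$ then places $\varphi$ in the scope of Theorem~\ref{thm:ManVill}, so $\varphi$ is continuous, discrete, and open on $\Omega$ (non-constancy follows from the nontrivial boundary trace of $\overline{\varphi}$). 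Moreover, for $q=ns/(s+1)>n-1$, the estimate of Remark~\ref{zam:phi} shows $\varphi$ is a mapping with bounded $(n,q)$-distortion, and Lemma~\ref{lem:orient} makes it sense-preserving.

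Next I would handle the boundary. Using that $\overline{\varphi}$ is a homeomorphism of $\overline{\Omega}$ onto $\overline{\Omega'}$ together with the locally quasi-isometric structure of $\partial\Omega$ (equivalent to Lipschitz boundary by Remark~\ref{equivbound}), extend $\overline{\varphi}$ to a homeomorphism $\Phi$ of a neighborhood of $\overline{\Omega}$ onto a neighborhood of $\overline{\Omega'}$. Define $\widetilde{\varphi}$ to equal $\varphi$ on $\Omega$ and $\Phi$ off $\overline{\Omega}$; the trace condition $\varphi|_\Gamma=\overline{\varphi}|_\Gamma$ glues the two pieces into a $W^1_n$-map on the enlarged domain, and the pointwise distortion inequality is preserved (with $M$ replaced by $\max\{M,\text{dist.~of~}\Phi\}\in L_s$ locally), so Theorem~\ref{thm:ManVill} applies to $\widetilde{\varphi}$ as well. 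Hence $\widetilde{\varphi}$ is continuous, open, and discrete on a neighborhood of $\overline{\Omega}$, giving a continuous extension of $\varphi$ to $\overline{\Omega}$ that agrees with $\overline{\varphi}$ on $\partial\Omega$ and so maps $\overline{\Omega}$ into $\overline{\Omega'}$.

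Finally I would conclude by a degree argument patterned on the end of the proof of Lemma~\ref{lem:orient}. For any $y\in\Omega'$ the homotopy invariance of the degree under boundary-preserving deformations yields $\mu(y,\widetilde{\varphi},\Omega)=\mu(y,\overline{\varphi},\Omega)=1$, while $\mu(y,\widetilde{\varphi},\Omega)=0$ for $y\notin\overline{\Omega'}$. Because $\varphi$ is open, discrete, and sense-preserving, each $y\in\Omega'$ has finitely many preimages $x_1,\dots,x_N\in\Omega$ with strictly positive integer local indices summing to the degree~$1$; consequently $N=1$ and $i(x_1,\varphi)=1$, so $\varphi$ is injective on $\Omega$. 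Together with the boundary correspondence this gives a continuous bijection $\overline{\Omega}\to\overline{\Omega'}$ from a compact space to a Hausdorff space, hence automatically a homeomorphism. I expect the main obstacle to be the boundary-extension step: because $W^1_n$ does not embed continuously into $C(\overline{\Omega})$, continuity up to $\partial\Omega$ cannot be read off from Sobolev embedding, so the gluing with $\Phi$ and the subsequent application of the Manfredi--Villamor theorem across $\partial\Omega$ must be justified carefully using the locally quasi-isometric structure of the boundary and the distortion control inherited from $\A$.
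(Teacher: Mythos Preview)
Your overall architecture---apply Theorem~\ref{thm:ManVill} to get continuity, openness and discreteness, invoke Lemma~\ref{lem:orient} for sense preservation, and then use a degree argument anchored on the boundary condition $\varphi|_\Gamma=\overline{\varphi}|_\Gamma$ to force injectivity---is exactly the paper's strategy. The interior part and the endgame (degree $=1$ plus positive local indices $\Rightarrow$ bijection, openness $\Rightarrow$ inverse continuous) are fine and match the paper.

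The gap is precisely where you flag it: the boundary extension. You propose to extend the homeomorphism $\overline{\varphi}$ to a homeomorphism $\Phi$ of a neighbourhood of $\overline{\Omega}$ and glue $\widetilde{\varphi}=\varphi$ inside, $\Phi$ outside, then apply Theorem~\ref{thm:ManVill} to $\widetilde{\varphi}$. For that you need $\Phi$ to have nonnegative Jacobian and distortion in $L_{s,\loc}$ on the exterior collar. But the only information available on $\overline{\varphi}$ is that it is a $W^1_n$-homeomorphism of $\overline{\Omega}$ onto $\overline{\Omega'}$; there is no distortion bound on $\overline{\varphi}$, and a generic extension built from its boundary values (e.g.\ via a collar $(x,t)\mapsto(\overline{\varphi}|_\Gamma(x),t)$) inherits no $L_s$-distortion control. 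So the hypothesis of Theorem~\ref{thm:ManVill} on the enlarged domain is not verified, and the step ``distortion of $\Phi$ lies in $L_s$'' is unsupported.

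The paper resolves this differently: instead of extending $\overline{\varphi}$, it extends $\varphi$ itself by \emph{reflection in local boundary charts}. Using the locally quasi-isometric (equivalently locally bi-Lipschitz, by Remark~\ref{equivbound}) charts $\nu_j$, $\mu_k$ that flatten $\Gamma$ and $\Gamma'$, one forms $\mu_k\circ\varphi\circ\nu_j^{-1}$ on a half-ball and symmetrizes across $\{x_n=0\}$. The reflected map is still $W^1_n$ (traces match on $\{x_n=0\}$ because $\varphi|_\Gamma=\overline{\varphi}|_\Gamma$ sends $\Gamma$ into $\Gamma'$), and its distortion is controlled by the distortion of $\varphi$ times bounded factors coming from the bi-Lipschitz charts---so the $L_s$-bound from $\A$ survives. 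Theorem~\ref{thm:ManVill} then applies on the full ball and yields continuity across the boundary. If you replace your ``extend $\overline{\varphi}$'' step by this local symmetrization of $\varphi$, the rest of your argument goes through unchanged.
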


\begin{proof}
The domain
$\Omega$
($\Omega'$)
has locally Lipschitz boundary, 
that is,
there exists a~tuple of charts
$\{\nu_j, \: U_j\}$
($\{\mu_k, \: V_k\}$),
where    
\begin{equation*}
	\nu_j: U_j\to B(0,r_j)\subset\R^n 
	\quad (\{\mu_k: V_k\to B'(0,r_k)\subset\R^n\})
\end{equation*}
and
\begin{align*}
	\nu_j(U_j \cap \Omega)  = B(0,r_j) & \cap \{ x_n > 0\}=O_j
	\:
	\text{and} \\%\quad
	& \nu_j (U_j \cap \Gamma) = B(0,r_j) \cap \{ x_n = 0\}
	\\ 
	(\mu_j (V_j \cap \Omega') = B'(0,r_k) & \cap \{ y_n > 0\}= O'_k
	\:
	\text{and}\\%\quad
	& \mu_k( V_k \cap \Gamma') = B'(0,r_k) \cap \{ y_n = 0\}),
\end{align*}
each mapping 
$\nu_k$ 
($\mu_k$)
is a~quasi-isometric mapping.

Then the mapping  
$\mu_k \circ \varphi \circ \nu_j^{-1}: O_{jk} \to O'_{jk}$,
where 
$O_{jk}=\nu_j\circ\varphi^{-1}(\mu_k^{-1}(O'_k))$ 
and 
$O'_{kj}=\mu_k\circ\varphi(\nu_j^{-1}(Q_j))$,
is of Sobolev class
$W^1_n ( O_{jk})$
(see~\cite[Theorem 1]{Resh2006}  for instance).

Consider the symmetrization
$\varphi_{jk,sym}: O_{jk} \cup \widetilde{O}_{jk} \to O'_{kj} \cup \widetilde{O}'_{kj}$
of the~mapping 
$\mu_k \circ  \varphi \circ \nu_j^{-1}$,
where
$\widetilde{O}_{jk} = \{ (x_1, x_2, \dots, -x_n) \mid (x_1, x_2, \dots, x_n) \in O_{jk} \}$
is the reflection of the set
$O_{jk}$
in the hyperplane
$O x_1 x_2 \dots x_{n-1}$
and
$\widetilde{O}'_{kj} = \{ (y_1, y_2, \dots, -y_n) \mid (y_1, y_2, \dots, y_n) \in O' \}$
is the reflection of the set
$O'_{kj}$
in the~hyperplane
$O' y_1 y_2 \dots y_{n-1}$,
\begin{equation*}
	\varphi_{jk,sym} (x_1, x_2, \dots, x_n) = 
	\begin{cases}
		\mu_k \circ \varphi \circ \nu_j^{-1} (x_1, x_2, \dots, x_n) 
		& \text{if } x_n \geq 0,
		\\
		\iota_n \circ \mu_k \circ \varphi \circ \nu_j^{-1} (x_1, x_2, \dots, -x_n) 
		& \text{if } x_n < 0,
	\end{cases}
\end{equation*}
where the mapping
$\iota_n: \R^n \to \R^n$
acts as 
$\iota_n(x_1, x_2, \dots, x_n) = (x_1, x_2, \dots, - x_n)$.

Since
$\varphi_{jk,sym} \in W^1_n (O_{jk}\cup \widetilde{O}_{jk})$,
by Theorem~\ref{thm:ManVill}
this mapping is either continuous, open and discrete,
or constant. In our case it is not a~constant.

Furthermore, we have the coincidence  
of mappings:
\begin{equation*}
	\varphi_{jk,sym}|_{U_j\cap\varphi^{-1}(V_k) \cap \Gamma} 
	= \overline{\varphi} |_{U_j \cap\varphi^{-1}(V_k)\cap \Gamma}.
\end{equation*}

Further, observe that
$\varphi$~%
and~%
$\overline{\varphi}$
are homotopic mappings
since they coincide on the boundary.
Consequently,
the degree
$\mu(\varphi,\Omega)$
of~%
$\varphi$
equals the degree
$\mu (\overline{\varphi}, \Omega)$
of~%
$\overline{\varphi}$,
and moreover,
$\mu (\overline{\varphi}, \Omega) = 1$.
Since~%
$\varphi$
is sense preserving by Lemma~\ref{lem:orient},
each point
$y \in \Omega'$
has exactly one preimage in~%
$\Omega$
(for more details,
see~\cite[Proposition 4.10]{Rickman1993});
therefore,
$\varphi$
is bijective.
The inverse mapping
$\varphi^{-1}$
is continuous
because~%
$\varphi$
is open
(by Theorem~\ref{thm:ManVill}).
Thus,
we conclude that~%
$\varphi$
is a~homeomorphism.
\end{proof}

\begin{lemma}\label{lem:prop2}
	Consider a~homeomorphism
	$\overline{\varphi}: \overline{\Omega} \to \overline{\Omega'}$
	and a~sequence
	$\{\varphi_k\}\subset\A$
	weakly converging to
	$\varphi$
	in
	$W^1_n(\Omega)$.
	If
	$\psi_k=\varphi^{-1}_k$
	then there exists a~subsequence
	$\{\psi_{k_l}\}$
	such that
	$\psi_{k_l}(y) \rightarrow \psi_0 (y)$
	uniformly up to the boundary.
\end{lemma}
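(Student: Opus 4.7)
The plan is to invoke the Arzel\`a--Ascoli theorem, so I first need uniform boundedness and then equicontinuity of $\{\psi_k\}$ on $\overline{\Omega'}$. By Lemma~\ref{lem:homeo} each $\varphi_k$ extends to a homeomorphism $\overline{\Omega} \to \overline{\Omega'}$, so $\psi_k = \varphi_k^{-1}$ is a well-defined continuous mapping $\overline{\Omega'} \to \overline{\Omega}$, and its image lies in the bounded set $\overline{\Omega}$; uniform boundedness is therefore immediate, and the real work is equicontinuity on $\overline{\Omega'}$.

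The first step towards equicontinuity is a uniform distortion estimate. The definition of $\A$ supplies a single function $M \in L_s(\Omega)$, $s > n-1$, with $\frac{|D\varphi_k(x)|^n}{J(x,\varphi_k)} \leq M(x)$ a.e., so that $\|K_{\varphi_k,n} \mid L_{ns}(\Omega)\|^n \leq \|M \mid L_s(\Omega)\|$ uniformly in $k$. Applying Remarks~\ref{zam:phi} and~\ref{zam:psi} (that is, Theorems~\ref{thm:Vod5} and~\ref{thm:Vod6}), the inverse $\psi_k$ induces a bounded composition operator $\psi_k^* : L^1_{q'}(\Omega) \to L^1_n(\Omega')$ with $q'=\frac{ns}{s-n+1}$, and
$$
\|K_{\psi_k, q'} \mid L_\varrho(\Omega')\|
\leq \|K_{\varphi_k, n} \mid L_{ns}(\Omega)\|^{n-1}
\leq \|M \mid L_s(\Omega)\|^{(n-1)/n},
$$
with $\varrho = \frac{ns}{n-1}$, uniformly in $k$. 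Applied to coordinate functions, this also yields a uniform $W^1_n(\Omega')$-bound on the $\psi_k$.

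I would then derive interior equicontinuity from this uniform $(q',n)$-distortion bound. Since $q' > n-1$, the capacity/modulus-of-continuity machinery underlying Theorem~\ref{thm:ManVill} gives, for each compact $K \Subset \Omega'$, a common modulus $\omega_K(t) \to 0$ with $|\psi_k(y_1) - \psi_k(y_2)| \leq \omega_K(|y_1-y_2|)$ for $y_1, y_2 \in K$; the constants in $\omega_K$ depend only on $K$, $\operatorname{diam}\Omega$, and $\|M \mid L_s(\Omega)\|$. For equicontinuity at boundary points $y_0 \in \Gamma'$, I would use the locally quasi-isometric (equivalently locally Lipschitz, by Remark~\ref{equivbound}) structure of $\partial \Omega'$ and $\partial\Omega$: choose charts $\mu$, $\nu$ flattening the boundaries near $y_0$ and $\overline{\varphi}^{-1}(y_0)$, symmetrize $\nu \circ \psi_k \circ \mu^{-1}$ across the bounding hyperplane exactly as in the proof of Lemma~\ref{lem:homeo}, and observe that these symmetrizations agree on the hyperplane with the fixed mapping $\nu \circ \overline{\varphi}^{-1}\circ \mu^{-1}$, because all $\psi_k$ coincide with $\overline{\varphi}^{-1}$ on $\Gamma'$. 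The symmetrizations inherit the uniform distortion bound, so the interior modulus applies to them and yields equicontinuity at $y_0$.

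The hardest step will be this boundary equicontinuity: one must verify that symmetrization preserves the finite-distortion property and the uniform $L_\varrho$ bound on $K_{\psi_k, q'}$, and that the quasi-isometric charts $\mu$, $\nu$ introduce only bounded multiplicative changes in these quantities. Once that is settled, uniform boundedness together with equicontinuity on all of $\overline{\Omega'}$ lets Arzel\`a--Ascoli produce a subsequence $\{\psi_{k_l}\}$ converging uniformly on $\overline{\Omega'}$ to a continuous limit $\psi_0 : \overline{\Omega'} \to \overline{\Omega}$.
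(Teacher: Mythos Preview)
Your overall strategy---Arzel\`a--Ascoli via uniform boundedness and equicontinuity, fed by the uniform distortion bound and the resulting uniform $W^1_n(\Omega')$ control on the $\psi_k$---matches the paper's. For interior equicontinuity the paper is explicit: it invokes the Mostow oscillation inequality
\[
\operatorname{osc}(f, S(y',r)) \le L\Bigl(\ln\tfrac{r_0}{r}\Bigr)^{-1/n}\Bigl(\int_{B(y',r_0)}|Df|^n\,dy\Bigr)^{1/n}
\]
and bounds $\int_{B}|D\psi_k|^n$ uniformly via H\"older and the estimate~\eqref{est:thmV6}. Your reference to ``the machinery underlying Theorem~\ref{thm:ManVill}'' is too vague---that theorem yields continuity, not a quantitative common modulus---so you should invoke this oscillation bound directly; you already have exactly the uniform $W^1_n$ control it requires.

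The boundary step is where you genuinely diverge. The paper does \emph{not} symmetrize: it fixes balls $B\Subset\Omega$, $B'\Subset\Omega'$, introduces capacity metrics $d'_n$ on $\Omega'\setminus B'$ and $d_q$ on $\Omega\setminus B$, uses the uniform bound on the composition operators $\varphi_k^*:L^1_n(\Omega')\to L^1_q(\Omega)$ to obtain $d_q(\psi_k(x),\psi_k(y))\le C\,d'_n(x,y)^{q/n}$, and then appeals to the $(\epsilon,\delta)$-condition (Jones extension) to show these capacity metrics are topologically equivalent to the Euclidean one near $\partial\Omega$ and $\partial\Omega'$. Your reflection route is more elementary and avoids this capacity-metric machinery, but one detail needs repair: composing with the target chart $\nu$ before reflecting requires $\psi_k(\mu^{-1}(\text{half-ball}))\subset\operatorname{dom}\nu$, which you cannot guarantee a~priori without the very equicontinuity you are proving. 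The fix is to reflect only on the domain side---set $\tilde\psi_k(x',x_n)=\psi_k\bigl(\mu^{-1}(x',|x_n|)\bigr)$ as a map into $\R^n$---so that $\tilde\psi_k\in W^1_n$ of the full ball (the traces match on $\{x_n=0\}$ since $\psi_k|_{\Gamma'}=\overline\varphi^{-1}|_{\Gamma'}$), and then apply the Mostow inequality componentwise. With this adjustment no target chart $\nu$, and in fact no distortion bound on the reflected map, is needed.
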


\begin{proof}
Since each
$\varphi_k: \Omega \to \Omega'$
is a~homeomorphism by Lemma~\ref{lem:homeo}, 
it follows that
all mappings
$\psi_k=\varphi^{-1}_k$
are well-defined.

Since the domain 
$\Omega'$
is bounded
and
$\psi_k|_{\Gamma'}=\overline{\varphi}^{-1}|_{\Gamma'}$,
the sequence
$\psi_k$
is uniformly bounded.

On the other hand,
$f\in W^1_n(\R^n)$
satisfies the estimate
(corollary of Lemma~4.1 of~\cite{Mostow1968})
\begin{equation*}\label{est:mod_cont}
	{\rm osc}(f,\: S(y',r)) \leq L \biggl(\ln
	\frac{r_0}{r}\biggr)^{-\frac{1}{n}}
	\Biggl(\,\int\limits_{B(y',r_0)} |Df(y)|^n dy \Biggl)^{\frac{1}{n}},
\end{equation*}
where
$S(y',r)$
is the sphere of radius
$r<\dfrac{r_0}{2}$
centered at~%
$y'$
and
$B(y',r_0)$
is the ball of radius
$r_0$
centered at~%
$y'$.

Thus,
for
$\psi_k\in W^1_n(\Omega')$
we obtain
\begin{equation*}
	{\rm osc}(\psi_k,\: S(y',r)) \leq L \biggl(\ln
	\frac{r_0}{r}\biggr)^{-\frac{1}{n}} \Biggl(\,\int\limits_{B(y',r_0)} |D
	\psi_k(y)|^n dy \Biggl)^{\frac{1}{n}},
\end{equation*}
if
$B(y',r_0)\subset\Omega'$.
It follows the equicontinuity of the family of functions 
$\{\psi_k\}_{k\in\mathbb N}$ 
on any compact part of 
$\Omega'$.

Indeed,
H\"older's inequality and the estimate ~\eqref{est:thmV6}
yield
\begin{align*}
	\int\limits_{B(y',r_0)}&|D \psi_k (y)|^n dy \leq 
	\int\limits_{B(y',r_0)}\frac{|D \psi_k(y)|^n}{J(y,\psi_k)^{\frac{n}{q'}}} 
	J(y,\psi_k)^{\frac{n}{q'}} \,dy  
	\\ & 
	\leq\Biggl(\,\int\limits_{\Omega'} 
	\Biggl(\frac{|D\psi_k(y)|^n}{J(y,\psi_k)^{\frac{n}{q'}}}\Biggr)^{\frac{\varrho}{n}}
	\, dy \Biggr)^{\frac{n}{\varrho}} \Biggl(\,\int\limits_{B(y',r_0)}
	J(y,\psi_k)^{\frac{n}{q'}\cdot\frac{\varrho}{\varrho - n}}
	\,dy\Biggr)^{\frac{\varrho - n}{\varrho}}  
	\\ &
	\leq \|K_{\psi_k,q'}(\cdot) \mid L_{\varrho}(\Omega')\|^{n}  
	|\psi_k(B(y',r_0))|^{\frac{s}{s-(n-1)}}
	\\ &
	\leq \|K_{\varphi_k,n}(\cdot) \mid L_{ns}(\Omega)\|^{n(n-1)}  
	|\psi_k(B(y',r_0))|^{\frac{s}{s-(n-1)}}
	\\ &
	\leq \|M (\cdot) \mid L_{s}(\Omega)\|^{(n-1)} |\Omega|^{\frac{s}{s-(n-1)}}
\end{align*}
since
\begin{equation*}
	\frac{n}{q'}\cdot\frac{\varrho}{\varrho - n} =1 \quad
	\text{and} \quad 
	\frac{\varrho - n}{\varrho} = \frac{s}{s-(n-1)}.
\end{equation*}

To show the equicontinuity of the family of functions 
$\{\psi_k\}_{k\in\mathbb N}$ 
near 
$\partial\Omega'$ 
we use 
the method introduced in papers~%
\cite{GV1978, Vod1975, VGR1979}.
We fix an arbitrary ball
$B\Subset\Omega$  
such that for all 
$k\geq k_0$, 
where  
$k_0$ 
is big enough, the inclusion  
$\varphi_k(B)\subset B'\Subset\Omega' $ 
holds 
(here 
$B'$ 
is a~ball).  
Consider two points 
$x$,
$y\in \Omega'\setminus  B'$ 
and an arbitrary curve 
$\gamma:[0,1]\to\Omega'\setminus  B'$  
with endpoints  
$x$ 
and 
$y$.

\begin{definition}
	The  quantity
	\begin{equation*}
		\operatorname{cap}(B',\gamma; L^1_n(\Omega')) 
		= \inf\limits_u\int\limits_{\Omega'}|D u|^n\,dy, 
	\end{equation*}
	where the lower bound is taken over all continuous functions
 	$u\in L^1_n(\Omega')$
	such that  
	$u=0$ 
	on 
	$B'$ 
	and 
	$u\geq 1$ 
	on 
	$\gamma$,
	is called \textit{capacity} of the pair sets 
	$(B',\gamma)$.
\end{definition}

If we now define
\begin{equation*}
	d'_n(x,y)=\inf\limits_\gamma\operatorname{cap}(B',\gamma; L^1_n(\Omega'))
\end{equation*}
where the lower bound is taken over all curves
$\gamma:[0,1]\to\Omega'\setminus  B'$ 
with endpoints 
$x$ 
and 
$y$,  
we obtain a~metric on the set
$\Omega'\setminus  B'$~\cite{GV1978, VGR1979}.

By Remark~\ref{zam:phi} 
the composition operator
$\varphi_k^*: L^1_n(\Omega') \to L^1_q(\Omega)$,
where
$q=\frac{ns}{s+1}$
and
$\varphi_k^*(f)=f\circ \varphi_k$
is bounded and by Lemma~\ref{lem:bounded_comp} below
 $\|\varphi_k^*\|\leq \|M(\cdot) \mid L_{s}(\Omega)\|^\frac{1}{n}$.

 From here and~\eqref{est:thmV5} we infer 
\begin{equation*}
	\operatorname{cap}(B,\varphi^{-1}_k(\gamma); L^1_q(\Omega))^\frac{1}{q}\leq 
	\|M(\cdot) \mid L_{s}(\Omega)\|^\frac{1}{n}
	\operatorname{cap}(B',\gamma; L^1_n(\Omega'))^\frac{1}{n}.
\end{equation*}

The last estimate implies 
\begin{equation*}
	d_q(\psi_k(x),\psi_k(y))\leq \|M(\cdot) \mid L_{s}(\Omega)\|^\frac{q}{n} 
	d'_n(x,y)^\frac{q}{n}.
\end{equation*}
As soon as  domain 
$\Omega'$ 
($\Omega$) 
meets 
$(\epsilon,\delta)$-condition 
in the sense of paper 
\cite{Jones1981}
there exists an extension  operator 
$i':L^1_n(\Omega')\to L^1_n(\mathbb R^n)$
($i':L^1_q(\Omega)\to L^1_q(\mathbb R^n)$). 
It follows that the topology of the metric space 
$(\Omega'\setminus  B',d'_n)$ 
(\,$(\Omega\setminus  B,d_q)$\,)
is equivalent to the Euclidean one.
The last property is a~consequence of the following  two assertions:
\begin{enumerate}
	\item 
		\textit{given 
		$\rho>0$, 
		there exist 
		$\tau>0$ 
		such that 
		$|x-y|<\rho$ 
		provided 
		$d_q(x,y)<\tau$ 
		for points 
		$x$, 
		$y\in \Omega$ 
		closed enough to 
		$\partial \Omega$};
	\item 
		\textit{given 
		$\tau>0$, 
		there exist 
		$\varkappa>0$ 
		such that 
		$d'_n(x,y)<\tau$ 
		provided 
		$|x-y|<\varkappa$ 
		for points 
		$x$,
		$y\in \Omega'$ 
		closed enough to 
		$\partial \Omega'$}.
\end{enumerate}

Thus,
we see that
the family
$\{\psi_k,\: k\in\mathbb{N}\}$
is equicontinuous and uniformly bounded. 
By the Arzel\`a--Ascoli theorem
(see~\cite{KolmFom1976} for instance)
there exists a~subsequence 
$\{\psi_{k_l}\}$
converging uniformly to a~mapping
$\psi_0$
as
$k_l \rightarrow \infty$.
\end{proof}

\subsubsection{Nonnegativity of the Jacobian.} \label{subsec:J>0}

\begin{lemma}
	The limit mapping
	$\varphi_0$
	satisfies
	$J(\cdot,\varphi_0)\geq 0$
	a.~e. in 
	$\Omega$.
\end{lemma}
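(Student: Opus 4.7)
The plan is to deduce nonnegativity of $J(\cdot,\varphi_0)$ from the fact that the property $f\geq 0$ a.e.\ is preserved under weak convergence in $L_r(\Omega)$. Each mapping $\varphi_k$ lies in the admissible class $\A$, so $J(x,\varphi_k)\geq 0$ almost everywhere in $\Omega$. From the previous subsection we already have the weak convergence $J(\cdot,\varphi_k)\rightharpoonup J(\cdot,\varphi_0)$ in $L_r(\Omega)$, obtained via Lemma~\ref{lem:AdjJ}. Since $\Omega$ is bounded, the characteristic function $\chi_E$ of any measurable subset $E\subset\Omega$ belongs to $L_{r'}(\Omega)$, the dual of $L_r(\Omega)$. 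Testing the weak limit against $\chi_E$ gives
\begin{equation*}
    \int_E J(x,\varphi_0)\,dx
    =\lim_{k\to\infty}\int_E J(x,\varphi_k)\,dx
    \geq 0.
\end{equation*}
As this holds for every measurable $E\subset\Omega$, standard arguments force $J(x,\varphi_0)\geq 0$ for almost all $x\in\Omega$.

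An equivalent route, should one prefer an explicit convergence a.e., is via Mazur's theorem (Theorem~\ref{thm:Mazur}): there exist convex combinations $w_k=\sum_{m=k}^{N(k)}\lambda_m^k J(\cdot,\varphi_m)$ with $\lambda_m^k\geq 0$ and $\sum_m \lambda_m^k=1$ converging to $J(\cdot,\varphi_0)$ strongly in $L_r(\Omega)$. Each $w_k$ is a nonnegative-coefficient combination of nonnegative functions, hence $w_k\geq 0$ a.e.\ in $\Omega$; passing to a subsequence converging pointwise a.e.\ to $J(\cdot,\varphi_0)$ yields the claim. There is no real obstacle here --- the content of the lemma is essentially the weak closedness of the positive cone in $L_r(\Omega)$, which is a direct consequence of its convexity and strong closedness.
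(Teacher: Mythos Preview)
Your argument is correct and matches the paper's approach: the paper likewise derives $J(\cdot,\varphi_0)\geq 0$ directly from the weak convergence $J(\cdot,\varphi_k)\rightharpoonup J(\cdot,\varphi_0)$ in $L_r(\Omega)$ with $r>1$, i.e.\ from the weak closedness of the nonnegative cone, and your test-against-$\chi_E$ and Mazur arguments simply spell this out. The only extra content in the paper is a supplementary remark that even in the borderline case $r=1$ one can reach the same conclusion via Reshetnyak's theorem (Theorem~\ref{thm:Resh}), which is not needed under the standing hypothesis $r>1$.
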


\begin{proof}
The inequality
$J(\cdot,\varphi_0)\geq 0$
follows directly from the weak convergence of
$J(\cdot,\varphi_k)$
in
$L_r(\Omega)$
with
$r>1$.

Even in the case
$r=1$
we can establish the nonnegativity of the Jacobian
by using weak convergence %of the Jacobians
(Theorem~{\ref{thm:Resh}}).
Indeed,
the sequence
$\varphi_k \in W^1_n(\Omega)$
is bounded in
$W^1_n(\Omega)$,
and,
since the embedding of
$W^1_n(\Omega)$
into
$L_1(\Omega)$
is compact
(Theorem~\ref{thm:embedding}), 
there exists a~subsequence converging in
$L_1(\Omega)$.
Hence,
for every continuous function 
$f:\Omega \to \R$
with compact support in~%
$\Omega$
we have
\begin{equation*}
	\int\limits_{\Omega} f(x)J(x,\varphi_k)\,dx \xrightarrow[k \rightarrow \infty]{} 
	\int\limits_{\Omega} f(x)J(x,\varphi_0)\,dx.
\end{equation*}
It follows immediately  
\begin{equation*}
	\int\limits_{\Omega} f(x)J(x,\varphi_0)\,dx\geq 0
\end{equation*}
for an arbitrary function 
$f(x)\geq 0$.
Hence we imply 
$J(x,\varphi_0)\,dx\geq 0$ 
a.~e. 
\end{proof}

%========

\subsubsection {Behavior on the boundary.} \label{subsec:phi_0_Gamma}

\begin{lemma}
	Equality
	$\varphi_0|_\Gamma=\overline{\varphi}|_\Gamma$
	holds
	a.~e.~in~%
	$\Gamma$.
\end{lemma}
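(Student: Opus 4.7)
The plan is to reduce the boundary equality to the compactness of the trace operator. Recall that each admissible deformation $\varphi_k \in \A$ satisfies $\varphi_k|_\Gamma = \overline{\varphi}|_\Gamma$ in the sense of traces; equivalently, by the remark following the definition of $\A$, we have $\varphi_k - \overline{\varphi} \in \overset{\circ}{W^1_n}(\Omega)$ for every $k$.

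First, I would invoke Theorem~\ref{thm:trace}(2): since $\Omega$ is a bounded domain with locally Lipschitz (equivalently, locally quasi-isometric) boundary and we are in the borderline case $p=n$, the trace operator $\tr: W^1_n(\Omega) \to L_q(\Gamma)$ is bounded and compact for every $1 < q < \infty$. Next, since $\varphi_k \rightarrow \varphi_0$ weakly in $W^1_n(\Omega)$ by \eqref{weak_lim}, the sequence $\{\varphi_k\}$ is bounded in $W^1_n(\Omega)$, and a compact operator sends weakly convergent sequences to strongly convergent ones. Therefore
\begin{equation*}
    \tr \varphi_k \longrightarrow \tr \varphi_0 \quad \text{strongly in } L_q(\Gamma).
\end{equation*}

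On the other hand, $\tr \varphi_k = \tr \overline{\varphi}$ in $L_q(\Gamma)$ for every $k \in \mathbb{N}$, so the sequence $\{\tr \varphi_k\}$ is the constant sequence $\tr \overline{\varphi}$. Passing to the limit in $L_q(\Gamma)$ yields $\tr \varphi_0 = \tr \overline{\varphi}$ as elements of $L_q(\Gamma)$, and hence $\varphi_0(x) = \overline{\varphi}(x)$ for $\mathcal{H}^{n-1}$-almost every $x \in \Gamma$, which is exactly the assertion of the lemma.

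There is essentially no obstacle here: the argument is a direct consequence of the compactness of the trace operator at the critical exponent $p = n$, which is available because the domain has locally Lipschitz boundary (the only mild subtlety is noting that the locally quasi-isometric boundary assumption of the paper is equivalent to the locally Lipschitz assumption required by Theorem~\ref{thm:trace}, as established in Remark~\ref{equivbound}). Alternatively, one could argue that $\overset{\circ}{W^1_n}(\Omega)$ is a norm-closed, hence weakly closed, subspace of $W^1_n(\Omega)$, so weak convergence of $\varphi_k - \overline{\varphi} \in \overset{\circ}{W^1_n}(\Omega)$ to $\varphi_0 - \overline{\varphi}$ forces the limit to lie in $\overset{\circ}{W^1_n}(\Omega)$ as well, yielding the same conclusion.
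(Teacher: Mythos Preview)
Your proof is correct and follows essentially the same route as the paper: the paper also invokes compactness of the trace operator at $p=n$ to pass from weak convergence $\varphi_k\rightharpoonup\varphi_0$ in $W^1_n(\Omega)$ to strong convergence $\tr\varphi_k\to\tr\varphi_0$ in $L_q(\Gamma)$, and then identifies the limit with $\tr\overline{\varphi}$. The only cosmetic difference is that the paper extracts an a.e.-convergent subsequence to conclude, whereas you directly use that the constant sequence $\tr\overline{\varphi}$ has limit $\tr\overline{\varphi}$; your alternative weak-closedness argument for $\overset{\circ}{W^1_n}(\Omega)$ is also valid.
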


\begin{proof}
Since the trace operator is compact,
for every
$1< q < \infty$
we deduce
\begin{equation*}
	\varphi_k \longrightarrow \varphi_0
	\text{ weakly in }
	W^1_n(\Omega)\:\Rightarrow\: \tr\varphi_k\longrightarrow\tr \varphi_0
	\text{ in } L_p(\Gamma).
\end{equation*}
Extracting a~subsequence converging almost everywhere in~%
$\Gamma$,
we obtain
$\varphi_0|_\Gamma=\overline{\varphi}|_\Gamma$
almost everywhere in~%
$\Gamma$.
\end{proof}

\subsubsection{Boundedness of the composition operator.} \label{subsec:bounded_comp}

\begin{lemma} \label{lem:bounded_comp}
	The mapping
	$\varphi_0$
	induces a~bounded composition operator
	$\varphi_0^*: L^1_n(\Omega')\cap {\rm Lip}(\Omega')
	\to L^1_q(\Omega)$.
\end{lemma}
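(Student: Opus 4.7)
The plan is to pass to the limit in the uniform composition operator bounds satisfied by the minimizing sequence $\{\varphi_k\}\subset\A$. By Lemma~\ref{lem:homeo} each $\varphi_k$ is a homeomorphism, and the pointwise bound $K_{\varphi_k,n}^n(x)=\frac{|D\varphi_k(x)|^n}{J(x,\varphi_k)}\leq M(x)$ gives $\|K_{\varphi_k,n}\mid L_{ns}(\Omega)\|\leq \|M\mid L_s(\Omega)\|^{1/n}$. Hence, by Remark~\ref{zam:phi} (i.e.\ Theorem~\ref{thm:Vod5} applied with $p=n$), each $\varphi_k$ induces a bounded composition operator $\varphi_k^*:L^1_n(\Omega')\to L^1_q(\Omega)$ whose norm is uniformly controlled by $\|M\mid L_s(\Omega)\|^{1/n}$.

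Fix $f\in L^1_n(\Omega')\cap\operatorname{Lip}(\Omega')$; since $\Omega'$ is bounded and $f$ is Lipschitz, $f$ is bounded on $\Omega'$. The uniform operator bound then yields
\begin{equation*}
\|D(f\circ\varphi_k)\mid L_q(\Omega)\|\leq \|M\mid L_s(\Omega)\|^{1/n}\,\|Df\mid L_n(\Omega')\|
\end{equation*}
for every $k$. Since the condition $s>n-1\geq 1$ forces $q=\frac{ns}{s+1}>1$, the space $L_q(\Omega)$ is reflexive, and along a subsequence $D(f\circ\varphi_k)$ converges weakly in $L_q(\Omega)$ to some limit~$G$.

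The heart of the argument is to identify $G$ with the distributional gradient of $f\circ\varphi_0$. The compact embedding $W^1_n(\Omega)\Subset L_p(\Omega)$ from Theorem~\ref{thm:embedding}(ii), together with passage to a further subsequence, gives $\varphi_k\to\varphi_0$ a.e.\ in $\Omega$. Using the Lipschitz continuity and boundedness of $f$ and the dominated convergence theorem, I would then deduce $f\circ\varphi_k\to f\circ\varphi_0$ in $L_p(\Omega)$ for every finite $p$. In particular this convergence holds in the sense of distributions, so $D(f\circ\varphi_k)\to D(f\circ\varphi_0)$ distributionally; matched against the weak $L_q$-limit $G$, the uniqueness of distributional limits forces $G=D(f\circ\varphi_0)$.

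Finally, the lower semicontinuity of the $L_q$-norm under weak convergence gives
\begin{equation*}
\|D(f\circ\varphi_0)\mid L_q(\Omega)\|\leq \liminf_{k\to\infty}\|D(f\circ\varphi_k)\mid L_q(\Omega)\|\leq \|M\mid L_s(\Omega)\|^{1/n}\,\|Df\mid L_n(\Omega')\|,
\end{equation*}
which is exactly the desired boundedness of $\varphi_0^*:L^1_n(\Omega')\cap\operatorname{Lip}(\Omega')\to L^1_q(\Omega)$. The main delicate point is the identification $G=D(f\circ\varphi_0)$: the Lipschitz regularity of the test function $f$ is precisely what converts the mere a.e.\ convergence of $\varphi_k\to\varphi_0$ (where $\varphi_0$ has not yet been shown to be continuous) into the strong $L_p$ convergence of the compositions needed to justify the distributional limit.
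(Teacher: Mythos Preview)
Your proof is correct and follows essentially the same route as the paper's: uniform control of $\|\varphi_k^*\|$ via $\|M\mid L_s(\Omega)\|^{1/n}$, extraction of a weak limit of the gradients in $L_q$, identification of that limit with $D(f\circ\varphi_0)$ through strong/a.e.\ convergence of the compositions, and lower semicontinuity of the norm. The only cosmetic difference is that the paper applies the compact Sobolev embedding directly to $w_k=u\circ\varphi_k\in W^1_q(\Omega)$ to get $w_k\to w_0$ in $L_r$ and then a.e., whereas you apply it to $\varphi_k\in W^1_n(\Omega)$ and then invoke the Lipschitz continuity of $f$ together with dominated convergence to pass to $f\circ\varphi_k\to f\circ\varphi_0$; both lead to the same identification of the weak limit.
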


\begin{proof}
Consider
$u\in L^1_{n}(\Omega')\cap {\rm Lip}(\Omega')$.
Since~\eqref{est:thmV5} yields
\begin{equation*}
	\|\varphi_k^*\|\leq \|K_{\varphi_k,n}(\cdot) \mid L_{ns}(\Omega)\| \leq 
	\|M(\cdot) \mid L_{s}(\Omega)\|^\frac{1}{n},
\end{equation*}
the sequence
$w_k=\varphi_k^*u=u\circ \varphi_k$
is bounded in
$L^1_q(\Omega)$.
Using a~compact embedding into the Sobolev space,
we obtain a~subsequence with
$w_k\rightarrow w_0$
in
$L_r(\Omega)$,
where
$r\leq \frac{ns}{ns-s-1}$.
From this sequence,
in turn,
we can extract a~subsequence converging almost everywhere in~%
$\Omega$.
If
$u\in L^1_{n}(\Omega')\cap {\rm Lip}(\Omega')$
then
$w_0(x)=u\circ\varphi_0(x)$
for almost all
$x\in\Omega$.

On the other hand,
since
$w_k$
converges weakly to
$w_0$
in
$L^1_q (\Omega)$,
we have 
\begin{multline*}
	\|u\circ \varphi_0 \mid L^1_q(\Omega)\|=\|w_0 \mid L^1_q(\Omega)\|\leq 
	\varliminf \limits_{k\rightarrow\infty} \|w_k \mid L^1_q(\Omega)\| 
	\\ = 
	\varliminf \limits_{k\rightarrow\infty} \|\varphi^*_k(u) \mid L^1_q(\Omega)\|
	\leq \varliminf\limits_{k\rightarrow\infty} \|\varphi_k^*\| \cdot \|u\mid L^1_n(\Omega')\|
	\\ \leq 
	\|M(\cdot) \mid L_{s}(\Omega)\|^\frac{1}{n} \cdot \|u \mid L^1_n(\Omega')\|.
\end{multline*}

Thus,
$\varphi_0$
induces a~bounded composition operator
$\varphi_0^*: L^1_n(\Omega')\cap {\rm Lip}(\Omega')
\to L^1_q(\Omega)$,
and moreover,
$\|\varphi_0^*\|\leq \|M(\cdot) \mid L_{s}(\Omega)\|^\frac{1}{n}$.
\end{proof}

\subsubsection{Injectivity.}\label{subsec:inj_phi}

Verify that
the mapping
$\varphi_0: \Omega \to \overline{\Omega'}$
is injective almost everywhere
(since~%
$\varphi_0$
is the pointwise limit of the homeomorphisms
$\varphi_k: \Omega \to \Omega'$,
the images of some points
$x\in \Omega$
may lie on the boundary
$\partial \Omega'$).
Recall the definition.

\begin{definition}
	A~mapping
	$\varphi: \Omega \to \overline{\Omega'}$
	is called {\it injective almost everywhere}
	whenever there exists a~negligible set~%
	$S$
	outside which~%
	$\varphi$
	is injective.
\end{definition}

Denote by
$S \subset\Omega$
a~negligible set
on which the convergence 
$\varphi_k(x)\rightarrow \varphi_0(x)$
as
$k \rightarrow \infty$
fails.
If
$x\in\Omega\setminus S$
with %such that
$\varphi(x) \in \Omega'$
then the
injectivity follows from the uniform convergence of
$\psi_k$
on~%
$\Omega'$ (see Lemma~\ref{lem:prop2})
and the equality
\begin{equation*}
	\psi_k \circ \varphi_k(x)=x.
\end{equation*}
Passing to the limit as
$k \rightarrow \infty$,
we infer that
\begin{equation*}
	\psi_0 \circ \varphi_0(x)=x,\: x\in\Omega\setminus S.
\end{equation*}
Hence,
we deduce that
if
$\varphi_0(x_1)=\varphi_0(x_2)\in  \Omega'$
for two points
$x_1,x_2\in\Omega\setminus S$
then
$x_1=x_2$.

It remains to verify that
the set of points
$x\in\Omega$
with
$\varphi(x) \in \partial\Omega'$
is negligible.
The argument rests on the method of proof of~\cite[Theorem 4]{VodUhl2002}.
For the reader's convenience,
we present here the new details of this method. 

Given a~bounded open set
$A'\subset\R^n$,
define the class of functions
$\overset{\circ}L{}^1_p (A')$
as the closure of the subspace
$C_0^\infty (A')$
in the seminorm of
$L^1_p (A')$.
In general,
a~function
$f \in \overset{\circ}L{}^1_p (A')$
is defined only on the set~%
$A'$,
but,
extending it by zero,
we may assume that
$f \in L^1_p (\R^n)$.

\begin{lemma}[cf.~Lemma~1 of~\cite{VodUhl2002}]\label{lem:Phi}
	Assume that
	the mapping
	$\varphi: \Omega \to \overline{\Omega'}$
	induces a~bounded composition operator
	\begin{equation*}
		\varphi^*: L^1_p (\Omega') \cap {\rm Lip}(\Omega') \to L^1_q (\Omega), 
		\quad 1 \leq q < p \leq \infty.
	\end{equation*}
	Then 
	\begin{equation*}
		\Phi(A') = \sup \limits_{f \in \overset{\circ}L{}^1_p (A') \cap {\rm Lip}(A')} 
		\Bigg( \frac{\|\varphi^* f \mid L^1_q(\Omega) \|}
		{\|f \mid  L^1_p (A' \cap \Omega') \|} \Bigg)^{\sigma}, \quad
		\sigma = 
		\begin{cases}
			\frac{pq}{p-q} & \quad
			\text{for } p<\infty,
			\\
			q &  \quad
			\text{for } p=\infty,
	\end{cases}
\end{equation*}
is a~bounded monotone countably additive function
defined on the open bounded sets
$A'$
with
$A' \cap \Omega' \neq \emptyset$.
\end{lemma}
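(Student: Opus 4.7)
The plan is to verify the three properties separately, with countable additivity being the only substantive point. For boundedness, given $f$ in the admissible class extended by zero outside $A'$, the hypothesis that $\varphi^*: L^1_p(\Omega') \cap {\rm Lip}(\Omega') \to L^1_q(\Omega)$ is bounded yields $\|\varphi^* f \mid L^1_q(\Omega)\| \leq \|\varphi^*\| \cdot \|f \mid L^1_p(A' \cap \Omega')\|$, whence $\Phi(A') \leq \|\varphi^*\|^{\sigma}$ uniformly in $A'$. Monotonicity follows because $A_1' \subset A_2'$ implies $C_0^{\infty}(A_1') \subset C_0^{\infty}(A_2')$, hence $\overset{\circ}L{}^1_p(A_1') \subset \overset{\circ}L{}^1_p(A_2')$, and a function supported in $\overline{A_1'}$ has the same $L^1_p$-seminorm on $A_1' \cap \Omega'$ and on $A_2' \cap \Omega'$.

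For countable additivity, let $\{A_i'\}$ be a pairwise disjoint family of bounded open sets with $A' = \bigcup_i A_i'$ also in the admissible class, and split into subadditivity and superadditivity. For subadditivity, take any $f \in \overset{\circ}L{}^1_p(A') \cap {\rm Lip}(A')$ and decompose $f = \sum_i f_i$ with $f_i = f \chi_{A_i'}$; since any compact subset of $A'$ meets only finitely many $A_j'$, one checks that each $f_i \in \overset{\circ}L{}^1_p(A_i') \cap {\rm Lip}(A_i')$ and that the supports of the $\varphi^* f_i$ are pairwise disjoint in $\Omega$. Writing $a_i = \|f_i \mid L^1_p\|$ and $b_i = \|\varphi^* f_i \mid L^1_q\|$, one has $b_i \leq \Phi(A_i')^{1/\sigma} a_i$ together with $\sum_i a_i^p = \|f \mid L^1_p\|^p$ and $\sum_i b_i^q = \|\varphi^* f \mid L^1_q\|^q$. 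Applying H\"older's inequality with exponents $p/q$ and $p/(p-q)$, chosen precisely so that $(q/\sigma) \cdot p/(p-q) = 1$, delivers
\begin{equation*}
\sum_i b_i^q \leq \Bigl(\sum_i \Phi(A_i')\Bigr)^{(p-q)/p} \Bigl(\sum_i a_i^p\Bigr)^{q/p},
\end{equation*}
whence $\Phi(A') \leq \sum_i \Phi(A_i')$.

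For superadditivity, given $\varepsilon > 0$ and $N \in \mathbb{N}$, select $f_i \in \overset{\circ}L{}^1_p(A_i') \cap {\rm Lip}(A_i')$ with $(b_i/a_i)^{\sigma} \geq \Phi(A_i') - \varepsilon/2^i$ for $i = 1,\ldots,N$, and form $f = \sum_{i=1}^N c_i f_i$ with weights $c_i$ designed to produce equality in the H\"older step above; explicitly, $c_i^p a_i^p = (b_i/a_i)^{\sigma}$. Because the sum is finite and the supports are pairwise disjoint, $f$ inherits membership in $\overset{\circ}L{}^1_p(A') \cap {\rm Lip}(A')$, and a direct computation (using the identities $\sigma q/p + q = \sigma$ and $1/q - 1/p = 1/\sigma$) gives $(\|\varphi^* f \mid L^1_q\| / \|f \mid L^1_p\|)^{\sigma} = \sum_{i=1}^N (b_i/a_i)^{\sigma} \geq \sum_{i=1}^N \Phi(A_i') - \varepsilon$. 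Passing $N \to \infty$ and $\varepsilon \to 0$ closes the argument; the degenerate case $p = \infty$, $\sigma = q$ is handled analogously, with the identity $\|f\|_{\infty} = \sup_i c_i \|f_i\|_{\infty}$ for disjoint-supported pieces replacing the H\"older step. The main obstacle is the superadditivity construction: the particular choice of weights $c_i$ is exactly what forces the exponent $\sigma = pq/(p-q)$ to appear, and one must verify carefully that the test function assembled from only nearly-optimal pieces retains the Lipschitz regularity and vanishes near $\partial A'$, both of which are automatic for finite sums with disjoint supports but should be noted explicitly.
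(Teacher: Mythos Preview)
Your proof is correct and follows essentially the same route as the paper's own argument. The paper dispatches monotonicity in one line and states that subadditivity ``can be verified directly from the definition of~$\Phi$'', whereas you spell out the H\"older step for it; for superadditivity the paper makes exactly your construction, only absorbing your weights $c_i$ into the normalization $\|f_i \mid \overset{\circ}L{}^1_p(A'_i)\|^p = \Phi(A'_i)(1-\varepsilon/2^i)$ rather than carrying them separately.
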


\begin{proof}
It is obvious that
$\Phi(A'_1)\leq \Phi(A'_2)$
whenever
$A'_1 \subset A'_2$.

Take disjoint sets
$A'_i$, 
$i\in \mathbb{N}$
in 
$\Omega'$
and put
$A'_0 = \bigcup\limits_{i=1}^{\infty} A'_i$.
Consider a~function
$f_i \in \overset{\circ}L{}^1_p (A'_i) \cap {\rm Lip}(A_i')$
such that
the conditions
\begin{equation*}
	\| \varphi^* f_i \mid L^1_q(\Omega) \| 
	\geq (\Phi(A'_i) (1- \frac{\varepsilon}{2^i}))^{1/\sigma}
	\| f_i \mid \overset{\circ}L{}^1_p (A'_i) \|
\end{equation*}
and 
\begin{align*}
	& \| f_i \mid \overset{\circ}L{}^1_p (A'_i) \|^p = \Phi(A'_i) (1- \frac{\varepsilon}{2^i})
	\text{ for } p < \infty\\
	& (\| f_i \mid \overset{\circ}L{}^1_p (A'_i) \|^p = 1
	\text{ for } p=\infty)
\end{align*}
hold simultaneously,
where
$0< \varepsilon <1$.
Putting
$f_N = \sum \limits_{i=1} \limits ^{N} f_i \in L^1_p(\Omega') \cap {\rm Lip}(\Omega')$,
and applying H\"older's inequality
(the case of equality),
we obtain
\begin{multline*}
	\|\varphi^* f_N \mid L^1_q(\Omega) \| \geq 
	\bigg( \sum \limits_{i=1}\limits^{N}
	\Big(\Phi(A'_i) \Big(1- \frac{\varepsilon}{2^i}\Big)\Big)^{\frac{q}{\sigma}}
	\big\| f_i | \overset{\circ}L{}^1_p (A'_i) \big\| ^q\bigg)^{\frac{1}{q}}
	\\ = 
	\bigg( \sum \limits_{i=1}\limits^{N}
	\Phi(A'_i) \Big(1- \frac{\varepsilon}{2^i}\Big)\bigg)^{\frac{1}{\sigma}}
	\bigg\| f_N \mid \overset{\circ}L{}^1_p \Big(\bigcup\limits_{i=1}\limits^{N} A'_i\Big) 
	\bigg\|
	\\ \geq 
	\bigg( \sum \limits_{i=1}\limits^{N}
	\Phi(A'_i) - \varepsilon \Phi(A'_0) \bigg)^{\frac{1}{\sigma}}
	\bigg\| f_N \mid \overset{\circ}L{}^1_p \Big(\bigcup\limits_{i=1}\limits^{N} A'_i\Big) \bigg\|
\end{multline*}
since the set
$A_i$,
on which the functions
$\nabla \varphi^* f_i$
are nonvanishing,
are disjoint.
This implies that
\begin{equation*}
	\Phi(A'_0)^{\frac{1}{\sigma}} \geq \sup 
	\frac{\| \varphi^* f_N  \mid L^1_p (\Omega) \|}
	{\bigg\| f_N \mid \overset{\circ}L{}^1_p \Big(\bigcup\limits_{i=1}\limits^{N} A'_i\Big) 
	\bigg\|}
	\\ \geq 
	\bigg( \sum \limits_{i=1}\limits^{N}
	\Phi(A'_i) - \varepsilon \Phi(A'_0) \bigg)^{\frac{1}{\sigma}},
\end{equation*}
where we take the sharp upper bound over all functions
\begin{equation*}
	f_N \in \overset{\circ}L{}^1_p 
	\Big(\bigcup\limits_{i=1}\limits^{N} A'_i\Big) \cap {\rm Lip}
	\Big(\bigcup\limits_{i=1}\limits^{N} A'_i\Big),
	\quad
	f_N = \sum \limits_{i=1} \limits ^{N} f_i,
\end{equation*}
and
$f_i$
are of the form indicated above.
Since
$N$~%
and~%
$\varepsilon$
are arbitrary,
%we have
\begin{equation*}
	\sum\limits_{i=1}\limits^{\infty} \Phi(A'_i) \leq \Phi 
	\Big(\bigcup\limits_{i=1}\limits^{\infty} A'_i\Big).
\end{equation*}

We can verify the inverse inequality directly
by using the definition of~% the function
$\Phi$.
\end{proof}

\begin{lemma}
	Take a~monotone countably additive function~%
	$\Phi$
	defined on the~bounded open sets
	$A'$
	with
	$A' \cap \Omega' \neq \emptyset$.
	For every set 
	$A'$
	there exists a~sequence of balls
	$\{B_j\}$
	such that
	\begin{enumerate}
	\item
		the families of
		$\{B_j\}$
		and
		$\{2 B_j\}$
		constitute finite coverings of~% the set
		$U$;
	
	\item
		$\sum\limits_{j=1}\limits^{\infty} \Phi(2 B_j) \leq \zeta_n \Phi (U)$,
		where the constant
		$\zeta_n$
		depends only on the dimension~%
		$n$.
	\end{enumerate}
\end{lemma}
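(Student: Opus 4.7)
The plan is to invoke the Besicovitch-type covering Lemma~\ref{lem:bez} applied to the open set $U$ (identified with $A'$ in the statement), and then to exploit its partition structure together with the countable additivity of $\Phi$. Properties~(1) and~(3) of Lemma~\ref{lem:bez} already deliver the first conclusion: the balls $\{B_j\}$ it produces cover~$U$, and both $\{B_j\}$ and $\{2B_j\}$ constitute finite coverings of~$U$.

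For the quantitative estimate $\sum_j \Phi(2B_j) \leq \zeta_n \Phi(U)$, I would use item~(5) of Lemma~\ref{lem:bez}, which subdivides $\{2B_j\}$ into a bounded number $\zeta_n$ of tuples $\mathcal{T}_1, \dots, \mathcal{T}_{\zeta_n}$ of pairwise disjoint balls, where $\zeta_n$ depends only on~$n$. The crucial supporting observation is item~(2): since ${\rm dist}(x_j, \partial U) = 12 r_j$, each doubled ball satisfies $2B_j = B(x_j, 2 r_j) \Subset U$, so every disjoint union $\bigsqcup_{2B_j \in \mathcal{T}_i} 2B_j$ is an open subset of~$U$ and the monotonicity of~$\Phi$ applies.

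With this in hand, countable additivity of $\Phi$ on disjoint open sets, combined with monotonicity, yields
\begin{equation*}
\sum_{2B_j \in \mathcal{T}_i} \Phi(2B_j) = \Phi\bigg( \bigsqcup_{2B_j \in \mathcal{T}_i} 2B_j \bigg) \leq \Phi(U)
\end{equation*}
for each $i = 1, \dots, \zeta_n$. Summing these $\zeta_n$ inequalities produces the claimed estimate.

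The argument is essentially a bookkeeping exercise once Lemma~\ref{lem:bez} is at our disposal, and no substantial obstacle is expected. The only point requiring attention is the verification that $2B_j \subset U$, which makes the monotonicity step legitimate; this is immediate from the inequality $12 r_j > 2 r_j$ guaranteed by item~(2) of Lemma~\ref{lem:bez}.
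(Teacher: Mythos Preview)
Your proposal is correct and follows essentially the same route as the paper: apply Lemma~\ref{lem:bez}, split $\{2B_j\}$ into $\zeta_n$ disjoint subfamilies via item~(5), and use countable additivity plus monotonicity of~$\Phi$ on each subfamily. Your explicit verification that $2B_j \subset U$ via item~(2) is a detail the paper leaves implicit but is indeed needed for the monotonicity step.
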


\begin{proof}
In accordance with Lemma~\ref{lem:bez}
construct two sequences
$\{B_j\}$
and
$\{2 B_j\}$
of balls
and subdivide the latter
into
$\zeta_n$
subfamilies
$\{2 B_{1j}\}_{j=1}^{\infty}, \dots, \{2 B_{\zeta_n j}\}_{j=1}^{\infty}$
so that
in each tuple the balls are disjoint:
$2 B_{ki} \cap 2 B_{kj} = \emptyset$
for
$i \neq j$
and
$k = 1, \dots, \zeta_n$.
Consequently,
\begin{equation*}
	\sum\limits_{j=1}\limits^{\infty} \Phi(2 B_j) = \sum\limits_{k=1}
	\limits^{\zeta_n}
	\sum\limits_{j=1}\limits^{\infty} \Phi(2 B_{kj}) \leq
	\sum\limits_{k=1}\limits^{\zeta_n} \Phi(U) = 
	\zeta_n \Phi(U).
\end{equation*}
\end{proof}

\begin{lemma}\label{lem:N-1}
	If a~measurable almost everywhere injective mapping  
	$\varphi: \Omega \to \overline{\Omega'}$
	induces a~bounded composition operator
	\begin{equation*}
		\varphi^*: L^1_p (\Omega') \cap {\rm Lip}(\Omega') \to 
		L^1_q (\Omega), 
		\quad 1 \leq q < p \leq n,
	\end{equation*}
	then
	$|\varphi^{-1}(E)| = 0$
	for every set
	$E \subset \Gamma'= \partial \Omega'$.
\end{lemma}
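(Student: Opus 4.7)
My plan is to combine the bounded countably additive set function $\Phi$ from Lemma~\ref{lem:Phi} with the Besicovich-type covering apparatus (Lemma~\ref{lem:bez}, supplemented by the covering lemma for $\Phi$ stated just before the present lemma), adapting the argument of Theorem~\ref{thm4} to handle the fact that $\varphi$ takes values in $\overline{\Omega'}$ rather than $\Omega'$. Since $E\subset\Gamma'$ and $\Omega'$ has locally Lipschitz boundary, $|E|=0$. For each $\epsilon>0$ I would fix an open $U\subset\R^n$ containing $E$ with $|U|<\epsilon$; it then suffices to show $|\varphi^{-1}(U)|\to 0$ as $\epsilon\to 0$.

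I would apply the preceding covering construction to $U$ to get balls $\{B_j=B(y_j,r_j)\}$ covering $U$ with $\sum_j\Phi(2B_j)\leq\zeta_n\Phi(U)$ and $\{2B_j\}$ of bounded overlap. Balls with $2B_j\cap\Omega'=\emptyset$ can be discarded, since $\varphi(\Omega)\subset\overline{\Omega'}$ forces the corresponding preimages to be empty. On each surviving ball I would test with a Lipschitz hat function $f_j\in\overset{\circ}L{}^1_p(2B_j)\cap{\rm Lip}(2B_j)$ equal to $1$ on $B_j$ and supported in $2B_j$, satisfying $\|\nabla f_j\|_{L^p(2B_j\cap\Omega')}\leq Cr_j^{n/p-1}$. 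The composition $\varphi^*f_j$ equals $1$ on $\varphi^{-1}(B_j)$ and vanishes outside $\varphi^{-1}(2B_j)$; the definition of $\Phi(2B_j)$ then yields
\[
\|\nabla\varphi^*f_j\|_{L^q(\Omega)}\leq \Phi(2B_j)^{1/\sigma}\|\nabla f_j\|_{L^p(2B_j\cap\Omega')}.
\]

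To pass from this gradient bound to a measure bound on $\varphi^{-1}(B_j)$, I would invoke a Sobolev--Poincar\'e inequality for $\varphi^*f_j$, exploiting that $\varphi^*f_j\equiv 0$ on the positive-measure set $\Omega\setminus\varphi^{-1}(2B_j)$ once $\epsilon$ is small enough. This would give an inequality of shape $|\varphi^{-1}(B_j)|\leq C\,\Phi(2B_j)^{q/\sigma}r_j^{(n-p)q/p}$. Summing in $j$ via H\"older's inequality and using $\sum_j\Phi(2B_j)\leq\zeta_n\Phi(U)$ together with $\sum_j r_j^n\leq C|U|$ (a consequence of the bounded overlap), I expect an estimate of the form $|\varphi^{-1}(U)|\leq C\,\Phi(U)^{\alpha}|U|^{\beta}$ with $\alpha,\beta>0$. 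Since $\Phi$ is bounded by Lemma~\ref{lem:Phi} and $|U|\to 0$, this would force $|\varphi^{-1}(U)|\to 0$, proving $|\varphi^{-1}(E)|=0$.

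The main obstacle will be the Sobolev--Poincar\'e step: $\varphi^*f_j$ need not be compactly supported in $\Omega$ when $B_j$ sits near $\Gamma'$, so the classical Gagliardo--Nirenberg--Sobolev inequality does not apply directly. The cleanest workaround is to observe that for $\epsilon$ small the complement $\Omega\setminus\varphi^{-1}(2B_j)$ has positive measure on which $\varphi^*f_j\equiv 0$, permitting a Poincar\'e-type estimate; alternatively, one can reflect $\varphi$ across $\Gamma$ using the locally Lipschitz structure of $\Gamma$ and $\Gamma'$ (Remark~\ref{equivbound}) to reduce to the interior situation governed by Theorem~\ref{thm4}. The almost-everywhere injectivity hypothesis on $\varphi$ is what ensures these measure bounds are not spoiled by collapsing phenomena along $\varphi^{-1}(\Gamma')$.
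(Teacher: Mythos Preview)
Your overall architecture matches the paper's: use the set function $\Phi$ of Lemma~\ref{lem:Phi}, the Besicovich-type covering of Lemma~\ref{lem:bez}, hat functions $f_j$ on the balls, a Poincar\'e/Sobolev step to turn the gradient bound on $\varphi^*f_j$ into a measure bound on $\varphi^{-1}(B_j)$, and then H\"older to sum. The estimate you are aiming for, and its use, are essentially the paper's.

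The gap is precisely where you put it, and your workaround~1 is circular. To get a useful Poincar\'e estimate you need not merely that $\Omega\setminus\varphi^{-1}(2B_j)$ has positive measure, but a \emph{uniform} lower bound on that measure (the Poincar\'e constant depends on it). Since $2B_j\subset U$, this would follow from $|\varphi^{-1}(U)|\leq|\Omega|-\delta$ for some fixed $\delta>0$; but controlling $|\varphi^{-1}(U)|$ is exactly the statement you are trying to establish, so you cannot assume it a~priori for small~$\epsilon$. Your workaround~2 (reflection across $\Gamma,\Gamma'$) could in principle be made to work but introduces extra structure the paper avoids.

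The device the paper uses to break this circularity is Luzin's theorem. One first checks (via finite distortion, which holds in the application) that $\varphi^{-1}(E)\neq\Omega$, so there is a cube $Q$ with $2Q\subset\Omega$ and $|Q\setminus\varphi^{-1}(E)|>0$. By Luzin there is a compact $T\subset Q\setminus\varphi^{-1}(E)$ of positive measure on which $\varphi$ is continuous; then $\varphi(T)\subset\Omega'$ is compact and disjoint from $E\subset\Gamma'$. Now choose the open set $U\supset E$ with $U\cap\varphi(T)=\emptyset$. Every test function $\varphi^*f_j$ then vanishes on the \emph{fixed} set~$T$, independent of~$j$, so the Poincar\'e inequality on $2Q$ (for functions vanishing on $T$, with exponent $q^*=\tfrac{nq}{n-q}$) applies with a single constant. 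This yields
\[
|\varphi^{-1}(B_j)\cap Q|^{\frac{1}{q}-\frac{1}{n}}\leq C\,\Phi(2B_j)^{1/\sigma}\,|B_j|^{\frac{1}{p}-\frac{1}{n}},
\]
and summing via H\"older and letting $|U|\to 0$ gives $|\varphi^{-1}(E)\cap Q|=0$; the arbitrariness of the cube $Q$ finishes. Incidentally, the almost-everywhere injectivity is not what drives this step; the essential input is a fixed positive-measure set on which all $\varphi^*f_j$ vanish.
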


\begin{proof}
Consider the cutoff 
$\eta \in C_0^{\infty}(\R^n)$
equal to~1 on
$B(0,1)$
and vanishing outside
$B(0,2)$.
By Lemma~\ref{lem:Phi}
the function
$f (y) = \eta \big( \frac{y-y_0}{r} \big)$
satisfies %the estimate 
\begin{equation*}
	\| \varphi^* f \mid L^1_q (\Omega) \| 
	\leq C_1 \Phi(2B)^{\frac{1}{\sigma}} |B|^{\frac{1}{p}- \frac{1}{n}},
\end{equation*} 
where
$B \cap \Omega' \neq \emptyset$.
Take an~set 
$E \subset \Gamma'$
with
$|E| = 0$.
Since~%
$\varphi$
is a~mapping with finite distortion,
$\varphi^{-1}(E) \neq \Omega$
(otherwise,
$J(x,\varphi)=0$
and,
consequently,
$D\varphi(x) = 0$,
that is,~%
$\varphi$
is a~constant mapping).
Hence,
there is a~cube
$Q \subset \Omega$
such that
$2Q \subset \Omega$
and
$| Q \setminus \varphi^{-1} (E) | >0$
(here
$2Q$
is a~cube with the same center as~%
$Q$
and the edges stretched by a~factor of two compared to~%
$Q$).
Since~%
$\varphi$
is a~measurable mapping,
by Luzin's theorem  there is a~compact set
$T \subset Q \setminus \varphi^{-1} (E)$
of positive measure such that
$\varphi: T \to \Omega'$
is continuous.
Then,
the image
$\varphi(T) \subset \Omega'$
is compact and
$\varphi(T) \cap E = \emptyset$.
Consider an~open set 
$U \supset E$
with
$\varphi(T)\cap U = \emptyset$
and
$U\cap \Omega' \neq \emptyset$.
Choose a~tuple
$\{B(y_i, r_i)\}$
of balls in accordance with Lemma~\ref{lem:bez}:
$\{B(y_i, r_i)\}$
and
$\{B(y_i, 2 r_i)\}$
are coverings of~% the set
$U$,
and the multiplicity of the covering
$\{B(y_i, 2 r_i)\}$
is finite
($B(y_i, 2 r_i) \subset U$
for all~%
$i \in \mathbb{N}$).
Then the function
$f_i$
associated to the ball
$B(y_i, r_i)$
satisfies
$\varphi^*f_i = 1$
on %the set
$\varphi^{-1}(B(y_i, r_i))$
and
$\varphi^* f = 0$
outside %the set
$\varphi^{-1}(B(y_i, 2 r_i))$,
In~particular,
$\varphi^* f_i = 0$
on~% the set
$T$.
In addition,
we have the estimate
\begin{equation*}
	\| \varphi^* f_i \mid L^1_q (2Q) \| \leq \| \varphi^* f_i \mid L^1_q (\Omega)  \| 
	\leq 
	C_1 \Phi(B(y_i, 2 r_i))^{\frac{1}{\sigma}} |B(y_i, r_i)|^{\frac{1}{p}- \frac{1}{n}}.
\end{equation*} 
By Poincar\'e inequality
(see~\cite{Maz2011} for instance),
for every function
$g \in W^1_{q,\loc} (Q)$,
where
$q<n$,
vanishing on~% the set
$T$
we have
\begin{equation*}
	\bigg( \int\limits_Q | g |^{q^*} \,dx \bigg)^{1/q^*} 
	\leq 
	C_2 l(Q)^{n/q^*}\bigg( \int\limits_{2Q} |\nabla g |^{q} \,dx \bigg)^{1/q},
\end{equation*} 
where
$q^* = \frac{nq}{n-q}$
and
$l(Q)$
is the edge length of~%the cube
$Q$.

Applying Poincar\'e inequality to the function
$\varphi^* f_i$
and using the last two estimates,
we obtain
\begin{equation*}
	|\varphi^{-1}(B(y_i, r_i)) \cap Q|^{\frac{1}{q} - \frac{1}{n}}  
	\leq
	C_3 \Phi(B(y_i, 2 r_i))^{\frac{1}{\sigma}} 
	|B(y_i, r_i)|^{\frac{1}{p}- \frac{1}{n}}.
\end{equation*}
In turn,
H\"older's inequality guarantees that %the relation
\begin{multline*}
	\Bigg(\sum\limits_{i=1}\limits^{\infty}|\varphi^{-1}(B(y_i, r_i)) \cap Q|\Bigg)
	^{\frac{1}{q} - \frac{1}{n}}
	\\ \leq
	C_3 \Bigg(\sum\limits_{i=1}\limits^{\infty}\Phi(B(y_i, 2 r_i))\Bigg)^{\frac{1}{\sigma}}% \cdot 
	\Bigg(\sum\limits_{i=1}\limits^{\infty}|B(y_i, r_i)|\Bigg)^{\frac{1}{p}- \frac{1}{n}}.
\end{multline*} 
As the open set~%
$U$
is arbitrary,
this estimate yields
$|\varphi^{-1}(E) \cap Q|=0$.
Since the cube
$Q \subset \Omega$
is arbitrary,
it follows that
$|\varphi^{-1}(E)|=0$.
\end{proof}

Since for the domain
$\Omega'$
with locally Lipschitz boundary
we have 
${|\partial \Omega'| = 0}$,
Lemmas~\ref{lem:bounded_comp}~and~\ref{lem:N-1} implies the next lemma.

\begin{lemma}
	Consider a~mapping
	$\varphi_0: \Omega \to \Omega'$,
	where 
	$\Omega$, 
	$\Omega' \subset \R^n$
	are domains with locally Lipschitz boundary, 
	with
	$\varphi_0 \in W^1_n(\Omega)$
	and
	$J(x,\varphi_0) \geq 0$
	a.~e.~in
	$\Omega$,
	such that 
	\begin{enumerate}
	\item
		there exists a~sequence of homeomorphisms
		$\varphi_k \in W^1_n(\Omega) \cap FD(\Omega)$
		with
		$J(x,\varphi_k) \geq 0$
		almost everywhere in~%
		$\Omega$,
		such that
		$\varphi_k \rightarrow \varphi_0$
		weakly in~%
		$W^1_n(\Omega)$;

	\item
		each mapping
		$\varphi_k$
		induces a~bounded composition operator
		$\varphi_k^*: L^1_n(\Omega') \to L^1_q(\Omega)$
		with
		$q=\frac{ns}{s+1}$,
		where
		$\varphi_k^*(f)=f\circ \varphi_k$;

	\item
		the norms of the operators
		$\| \varphi_k^* \|$
		are jointly bounded;

	\item
		$\varphi_k\vert_{\partial \Omega} = \varphi_0\vert_{\partial \Omega}$.
	\end{enumerate}
	Then the mapping
	$\varphi_0$
	is injective almost everywhere.
\end{lemma}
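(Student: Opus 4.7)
The plan is to combine two observations already prepared in this subsection. First, Lemma~\ref{lem:prop2} gives uniform convergence of the inverses $\psi_k=\varphi_k^{-1}$ on $\overline{\Omega'}$ to a continuous limit $\psi_0$, which lets me invert $\varphi_0$ on the preimage of the open set $\Omega'$ modulo a null set. Second, Lemma~\ref{lem:bounded_comp} furnishes a bounded composition operator for $\varphi_0$, which through Lemma~\ref{lem:N-1} will force the preimage $\varphi_0^{-1}(\partial\Omega')$ to be null as well. Putting these two pieces together yields injectivity of $\varphi_0$ off a single null set.

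In more detail, I first pass to a subsequence (relabelled) so that $\varphi_k\to\varphi_0$ pointwise off a null set $S\subset\Omega$ and, by Lemma~\ref{lem:prop2}, $\psi_k\to\psi_0$ uniformly on $\overline{\Omega'}$. Fix $x\in\Omega\setminus S$ with $\varphi_0(x)\in\Omega'$. Since $\varphi_k(x)\to\varphi_0(x)$ and $\psi_k\to\psi_0$ uniformly on $\overline{\Omega'}$, passing to the limit in $\psi_k(\varphi_k(x))=x$ yields $\psi_0(\varphi_0(x))=x$. Consequently, if $x_1,x_2\in\Omega\setminus S$ satisfy $\varphi_0(x_1)=\varphi_0(x_2)\in\Omega'$, then $x_1=\psi_0(\varphi_0(x_1))=\psi_0(\varphi_0(x_2))=x_2$. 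This proves injectivity of $\varphi_0$ on $(\Omega\setminus S)\cap\varphi_0^{-1}(\Omega')$.

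It remains to show that $T=\varphi_0^{-1}(\partial\Omega')$ is null. By Lemma~\ref{lem:bounded_comp}, $\varphi_0$ induces a bounded composition operator $\varphi_0^*: L^1_n(\Omega')\cap {\rm Lip}(\Omega')\to L^1_q(\Omega)$ with $q=\frac{ns}{s+1}<n$. Because $\Omega'$ has locally Lipschitz boundary, $|\partial\Omega'|=0$, so Lemma~\ref{lem:N-1} applied with $E=\partial\Omega'$ and $p=n$ gives $|T|=0$. Combined with the previous paragraph, $\varphi_0$ is injective outside the null set $S\cup T$, which is exactly the definition of a.e.\ injectivity.

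The main obstacle is the a.e.\ injectivity hypothesis formally required by Lemma~\ref{lem:N-1}: at this stage we have established injectivity only on $\Omega\setminus(S\cup T)$, and we do not yet know that $T$ itself is null. Inspecting the proof of Lemma~\ref{lem:N-1}, however, injectivity is invoked solely to bound $\sum_i|\varphi_0^{-1}(B(y_i,r_i))\cap Q|$ by a fixed multiple of $|\varphi_0^{-1}(U)\cap Q|$ for a Besicovitch-type covering $\{B(y_i,r_i)\}$ of a neighborhood $U$ of $\partial\Omega'$. Each ball $B(y_i,r_i)$ can be taken inside $\Omega'$, so its preimage lies in $\Omega\setminus T$, where our partial injectivity is already in force; thus Lemma~\ref{lem:N-1} applies and the argument closes.
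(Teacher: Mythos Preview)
Your overall strategy is exactly the paper's: use Lemma~\ref{lem:prop2} to get $\psi_0\circ\varphi_0(x)=x$ off a null set $S$ on $\varphi_0^{-1}(\Omega')$, then invoke Lemma~\ref{lem:bounded_comp} together with Lemma~\ref{lem:N-1} and $|\partial\Omega'|=0$ to kill the preimage of the boundary. The paper simply states that Lemmas~\ref{lem:bounded_comp} and~\ref{lem:N-1} imply the result, without commenting on the circularity you flag.

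Your last paragraph, however, does not close the gap. The Besicovitch-type balls $B(y_i,r_i)$ in the proof of Lemma~\ref{lem:N-1} cover an open neighbourhood $U\supset E=\partial\Omega'$, so they necessarily meet $\partial\Omega'$ and cannot be ``taken inside $\Omega'$''; thus the preimages $\varphi_0^{-1}(B(y_i,r_i))$ need not lie in $\Omega\setminus T$. Your description of where injectivity enters is also off: the proof never needs $\sum_i|\varphi_0^{-1}(B_i)\cap Q|$ to be bounded by $|\varphi_0^{-1}(U)\cap Q|$; it only uses the trivial inequality $|\varphi_0^{-1}(E)\cap Q|\le\sum_i|\varphi_0^{-1}(B_i)\cap Q|$ and lets the right-hand side tend to zero with $|U|$.

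The correct resolution is simpler: a line-by-line reading of the proof of Lemma~\ref{lem:N-1} shows that the almost-everywhere injectivity hypothesis is never actually used. The only non-routine point is the claim $\varphi^{-1}(E)\neq\Omega$, and there the proof appeals to finite distortion (if $\varphi(\Omega)\subset E$ with $|E|=0$ then $J(\cdot,\varphi)=0$ a.e., hence $D\varphi=0$ a.e., hence $\varphi$ is constant). Since $\varphi_0\in W^1_n(\Omega)$ with $J(\cdot,\varphi_0)\ge0$ is the weak limit of homeomorphisms with a common boundary trace, it is nonconstant, and Lemma~\ref{lem:bounded_comp} already supplies the composition-operator bound; so the argument of Lemma~\ref{lem:N-1} applies verbatim to $\varphi_0$ without any injectivity assumption, and your step~3 goes through.
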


Let us mention another interesting corollary of Theorem~\ref{thm4}.

\begin{lemma}\label{lem:J>0}
	If an~almost everywhere injective mapping
	$\varphi: \Omega \to \Omega'$
	with
	$\varphi \in W^1_n(\Omega)$
	and
	$J(x,\varphi) \geq 0$
	a.~e.~in
	$\Omega$
	has Luzin $\N^{-1}$-property
	then
	${J(x,\varphi) > 0}$
	for almost all
	$x \in \Omega$.
\end{lemma}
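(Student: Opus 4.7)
The plan is to use the change-of-variables formula (Theorem~\ref{thm:change_var}) to pass the vanishing of $J$ on $Z = \{x \in \Omega : J(x,\varphi) = 0\}$ over to the vanishing of the measure of $\varphi(Z)$, and then to invoke the Luzin $\mathcal{N}^{-1}$-property to conclude $|Z| = 0$.

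First I would prepare the mapping for the area formula. Since $\varphi \in W^1_n(\Omega)$, it is approximately differentiable almost everywhere on~$\Omega$. By the first statement of Theorem~\ref{thm:change_var}, after redefining $\varphi$ on a negligible set we may assume that $\varphi$ enjoys Luzin $\mathcal{N}$-property. Such redefinition alters $\varphi$ only on a null set, so $J(x,\varphi) \geq 0$ almost everywhere, the almost-everywhere injectivity, and the Luzin $\mathcal{N}^{-1}$-property are all preserved (indeed, if $\varphi'$ differs from $\varphi$ only on a null set $N$ and $|E| = 0$, then $\varphi'^{-1}(E) \subset \varphi^{-1}(E) \cup N$ has measure zero).

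Next I would apply the change-of-variables formula of Theorem~\ref{thm:change_var} with $u \equiv 1$ and $D = Z$. Since $J(x,\varphi) = 0$ on $Z$, the left-hand side vanishes:
\begin{equation*}
    0 = \int\limits_Z J(x,\varphi)\, dx = \int\limits_{\R^n} N_\varphi(y, Z)\, dy.
\end{equation*}
As $N_\varphi(\cdot, Z) \geq 0$ everywhere and $N_\varphi(y, Z) \geq 1$ for every $y \in \varphi(Z)$, we conclude that $|\varphi(Z)| = 0$.

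Finally, the Luzin $\mathcal{N}^{-1}$-property gives $|\varphi^{-1}(\varphi(Z))| = 0$, and since $Z \subset \varphi^{-1}(\varphi(Z))$ we obtain $|Z| = 0$, i.e., $J(x,\varphi) > 0$ for almost all $x \in \Omega$. The most delicate step is the first one: we need to ensure that the redefinition producing Luzin $\mathcal{N}$-property is compatible with the standing assumptions (in particular the $\mathcal{N}^{-1}$-property and almost-everywhere injectivity), but as noted above this follows automatically from the fact that the redefinition is supported on a set of measure zero.
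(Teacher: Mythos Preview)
Your proof is correct and follows essentially the same route as the paper: show $|\varphi(Z)|=0$ via the change-of-variable formula (Theorem~\ref{thm:change_var}) and then pull this back with the Luzin $\mathcal{N}^{-1}$-property. The only notable difference is that the paper applies the formula with $u=\chi_{\varphi(Z)}$ over $\Omega\setminus\Sigma$ and uses the almost-everywhere injectivity to replace the Banach indicatrix by~$1$, whereas your choice $u\equiv1$, $D=Z$ reads off $|\varphi(Z)|=0$ directly from $\int_{\mathbb{R}^n}N_\varphi(y,Z)\,dy=0$ without ever invoking injectivity; this is a slight streamlining (and shows that the injectivity hypothesis is not actually needed for the lemma).
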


\begin{proof}
Denote by~%
$E$
a~set outside which the mapping~%
$\varphi$
is approximatively differentiable and has  Luzin $\N^{-1}$-property. 
Since
$\varphi \in W^1_n(\Omega)$,
it follows that
$|E| = 0$
(see~\cite{Whit1951, Haj1993}).
In addition,
we may assume that
\begin{equation*}
	Z = \{ x\in \Omega \setminus E \mid J(x, \varphi) = 0\}
\end{equation*}
is a~Borel set. 
Put
$\sigma = \varphi(Z)$.
By the change-of-variable formula
(Theorem~\ref{thm:change_var}),
taking the injectivity of~%
$\varphi$
into account,
we obtain
\begin{equation*}
	\int\limits_{\Omega \setminus \Sigma} \chi_{Z} (x) J (x, \varphi) \, dx
	= \int\limits_{\Omega \setminus \Sigma} (\chi_{\sigma} \circ \varphi)(x) J (x, \varphi) \, dx 
	= \int\limits_{\Omega'} \chi_{\sigma} (y) \, dy. 
\end{equation*}
By construction,
the expression in the left-hand side vanishes;
consequently,
$|\sigma| = 0$.
On the other hand,
since~%
$\varphi$
has  Luzin $\N^{-1}$-property,
we have
$|Z| = 0$.
\end{proof}

Using Lemma~\ref{lem:J>0},
we conclude that
the limit mapping
$\varphi_0$
satisfies the~strict inequality
$J(x,\varphi_0) > 0$
a.~e.~in~%
$\Omega$.

\subsubsection{Behavior of the distortion coefficient.}\label{subsec:prop_K}

The next lemma follows directly from~\cite[Theorem 1]{VodUhl2002}.

\begin{lemma}
	If an~almost everywhere injective mapping 
	$\varphi: \Omega \to \Omega'$
	generates a~bounded composition operator
	${\varphi^*: L^1_n(\Omega') \cap {\rm Lip}(\Omega') 
	\to L^1_q(\Omega)}$
	with
	$q=\frac{ns}{s+1}$
	then 
	\begin{enumerate}
	\item
		$\varphi\in ACL(\Omega)$;
	\item
		$\varphi$
		has finite distortion;
	\item
		$\|K_{\varphi,n}(\cdot) \mid L_{ns}(\Omega)\|\leq \widetilde{M}<\infty$.
	\end{enumerate}	
\end{lemma}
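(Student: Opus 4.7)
The plan is to deduce this lemma from the necessity direction of Theorem~\ref{thm:Vod5} (originally Theorem~1 of~\cite{VodUhl2002}), which was designed precisely to characterize mappings inducing a bounded composition operator between homogeneous Sobolev spaces in terms of the three desired properties: the ACL regularity, finite distortion, and the integrability of the operator distortion function $K_{\varphi,p}$.

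First I would check that the exponents line up. With $p=n$ and $q=\frac{ns}{s+1}$ one has $\frac{1}{\varkappa}=\frac{1}{q}-\frac{1}{p}=\frac{s+1}{ns}-\frac{1}{n}=\frac{1}{ns}$, so $\varkappa=ns$, which is exactly the exponent claimed in part~(3). Thus a direct application of Theorem~\ref{thm:Vod5} would deliver all three conclusions at once, together with the quantitative bound $\widetilde M \le C\|\varphi^*\|$.

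Two technical gaps need to be bridged between the hypothesis and the form in which Theorem~\ref{thm:Vod5} is usually stated. First, our composition operator is only defined on the dense subclass $L^1_n(\Omega')\cap\operatorname{Lip}(\Omega')$ rather than on the full space $L^1_n(\Omega')$. This is handled exactly as in the Remark following Theorem~\ref{thm4}: the proof of the necessity direction in~\cite{VodUhl2002} uses only smooth (in particular Lipschitz) test functions—localised cut-offs of coordinate-type functions and capacitary test functions—so it produces the same conclusion from the weaker hypothesis. Second, $\varphi$ is only assumed almost everywhere injective rather than a homeomorphism, but the ACL property, finite distortion, and the pointwise definition of $K_{\varphi,n}$ are local measure-theoretic notions unaffected by modification on a null set, so the proof of Theorem~1 of~\cite{VodUhl2002} still goes through for this class (this is in fact the generality in which that theorem is proved).

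The main obstacle—at least conceptually—is verifying that the restriction to Lipschitz test functions really suffices in the proof of the cited theorem; all the remaining work has already been done upstream. Once that verification is in hand, the lemma reduces to a direct citation together with the elementary parameter computation above.
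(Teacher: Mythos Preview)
Your approach is essentially the same as the paper's: both reduce the lemma to Theorem~1 of~\cite{VodUhl2002} (the necessity direction behind Theorem~\ref{thm:Vod5}) and the exponent check $\varkappa=ns$. Your observation that the proof there uses only Lipschitz test functions, so the hypothesis $L^1_n(\Omega')\cap\operatorname{Lip}(\Omega')$ suffices, matches the paper's reasoning.

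One point you gloss over that the paper makes explicit: the role of almost-everywhere injectivity is not merely that the conclusions are ``local measure-theoretic notions unaffected by a null set.'' Theorem~1 of~\cite{VodUhl2002} is stated for general (not necessarily injective) mappings, and its conclusion controls a function $H_q$ living on the \emph{target} $\Omega'$, which in general aggregates contributions over all preimages. Almost-everywhere injectivity is what collapses $H_q(y)$ to the single-preimage expression $|D\varphi(x)|/|J(x,\varphi)|^{1/q}$ at $y=\varphi(x)$; one then applies the change-of-variable formula to transport the $L_\varkappa(\Omega')$-norm of $H_q$ back to the $L_{ns}(\Omega)$-norm of $K_{\varphi,n}$ on the source. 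The paper writes out this computation. Your sketch would be complete once you insert this step.
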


Indeed,
for each almost everywhere injective mapping~%
$\varphi$
the function
$H_q(y)$
of~\cite{VodUhl2002}
becomes quite simple:
\begin{equation*}
	H_q(y) = 
	\begin{cases}
		\dfrac{|D \varphi (x)|}{|J(x, \varphi)|^{1/q}} & \quad
		\text{if } 
		y = \varphi(x), \: x\in \Omega \setminus (Z \cup \Sigma \cup I),\\
		0 & \quad
		\text{otherwise}.
	\end{cases}
\end{equation*}

The necessary relations follow from the change-of-variable formula
and the inequality
$J(x, \varphi) \geq 0$:
\begin{multline*}
	H_{p,q} (\Omega')^{\varkappa} 
	= \| H_q (\cdot)\mid L_\varkappa(\Omega') \|^{\varkappa} = 
	\int\limits_{\Omega'} ( H_q(y))^{\frac{pq}{p-q}} \, dy
	\\ = 
	\int\limits_{\Omega \setminus (Z \cup \Sigma \cup I)} 
	\Biggl( \frac{|D \varphi (x)|}{|J(x, \varphi)|^{1/q}} \Biggr)^{\frac{p}{p-q}} 
	J(x, \varphi) \, dx = 
	\int\limits_{\Omega \setminus (Z \cup \Sigma \cup I)} 
	\frac{|D \varphi (x)|^{\frac{pq}{p-q}}}{|J(x, \varphi)|^{\frac{q}{p-q}}} \, dx 
	\\ =
	\int\limits_{\Omega \setminus (Z \cup \Sigma \cup I)} 
	\frac{|D \varphi (x)|^{ns}}{|J(x, \varphi)|^{s}} \, dx = 
	\|K_{\varphi,n}(\cdot) \mid L_{ns}(\Omega)\|^{ns}.
\end{multline*}

By Theorem~%
\ref{thm:ManVill},
the mapping
$\varphi_0 \in W^1_n(\Omega)$
is continuous, open, and discrete;
furthermore,
$\varphi_0|_{\Gamma}=\overline{\varphi}|_{\Gamma}$,
and so
$\varphi_0$
is a~homeomorphism
by Lemma~\ref{lem:homeo}.
It remains to verify that
the pointwise inequality
\begin{equation*}
	\frac{|D\varphi_0(x)|^n}{J(x,\varphi_0)}\leq M (x)
\end{equation*}
holds almost everywhere in~%
$\Omega$.

\begin{lemma}
	Consider a~sequence
	$\{\varphi_k\}_{k\in \mathbb{N}}$
	of mappings
	$\varphi_k: \Omega \to \Omega'$
	with finite distortion
	weakly converging in
	$W_n^1(\Omega)$
	to a~mapping
	$\varphi_0: \Omega \to \Omega'$.
	Assume that
	$J(x,\varphi_k)\geq 0$
	almost everywhere in~%
	$\Omega$.
	Suppose also that
	there exists an~almost everywhere nonnegative function 
	$M (x) \in L_{s}(\Omega)$
	such that
	\begin{equation*}%\label{est:lem_phi_k}
		K_{\varphi_k,n}(x) \leq M(x)^{\frac{1}{n}} \:
		\text{for all } k\in \mathbb{N}, \:
		\text{for almost all } x \in \Omega.
	\end{equation*}
	Then the limit mapping~%
	$\varphi$
	satisfies
	\begin{equation*}
		K_{\varphi_0,n}(x) \leq M(x)^{\frac{1}{n}} \:
		\text{for almost all } x \in \Omega.
	\end{equation*}
\end{lemma}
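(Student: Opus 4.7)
The plan is to derive the pointwise bound $|D\varphi_0(x)|^n \leq M(x) J(x,\varphi_0)$ a.e. by integrating a truncated version of the hypothesis against smooth nonnegative weights and passing to the weak limit in $k$. Fix a nonnegative $\eta \in C_0^\infty(\Omega)$ and a threshold $N>0$, and set $\rho_N = \eta \chi_{\{M \leq N\}}$, so that $\rho_N M \in L_\infty(\Omega)$ with support compactly contained in $\Omega$. Multiplying the pointwise hypothesis by $\rho_N$ and integrating yields
\begin{equation*}
\int_\Omega \rho_N |D\varphi_k|^n \, dx \leq \int_\Omega (\rho_N M)\, J(\cdot, \varphi_k) \, dx
\qquad \text{for every } k.
\end{equation*}

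For the left-hand side, since $F \mapsto |F|^n$ is convex and $\rho_N \geq 0$ is bounded and measurable, the integral functional $\varphi \mapsto \int \rho_N |D\varphi|^n\, dx$ is sequentially weakly lower semicontinuous on $W^1_n(\Omega)$, so
\begin{equation*}
\int_\Omega \rho_N |D\varphi_0|^n \, dx \leq \varliminf_{k \to \infty} \int_\Omega \rho_N |D\varphi_k|^n \, dx.
\end{equation*}
For the right-hand side, Lemma~\ref{lem:AdjJ} applied with $m=n$ yields $J(\cdot,\varphi_k) \to J(\cdot,\varphi_0)$ weakly in $L_1(\Omega)$. Since the weight $\rho_N M$ is only in $L_\infty$ and need not be continuous, I would approximate it in $L_1$-norm by continuous compactly supported functions $f_j$ (using its boundedness and compact support), apply Theorem~\ref{thm:Resh} to pass the limit in $k$ for each $f_j$, and then exchange the two limits using the uniform $L_1$-bound on $J(\cdot,\varphi_k)$ that follows from Hadamard's inequality and the uniform $W^1_n$-bound on $\{\varphi_k\}$. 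The outcome is
\begin{equation*}
\int_\Omega \rho_N |D\varphi_0|^n \, dx \leq \int_\Omega \rho_N M\, J(\cdot, \varphi_0) \, dx.
\end{equation*}

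Since this holds for every nonnegative $\eta \in C_0^\infty(\Omega)$, localizing with the Lebesgue differentiation theorem gives $|D\varphi_0(x)|^n \leq M(x)\, J(x,\varphi_0)$ for a.e.\ $x \in \{M \leq N\}$. Letting $N \to \infty$ and observing that $|\{M = \infty\}| = 0$ because $M \in L_s(\Omega)$, the inequality extends to almost every $x \in \Omega$; this is precisely $K_{\varphi_0,n}(x) \leq M(x)^{1/n}$ a.e., the zero set of $J(\cdot,\varphi_0)$ being absorbed into the convention for $K_{\varphi_0,n}$ together with the finite distortion property inherited by the limit. The main technical difficulty is the passage to the limit on the right-hand side: Theorem~\ref{thm:Resh} supplies weak continuity of the Jacobian only against continuous test functions with compact support, so the approximation-and-exchange argument for the merely bounded weight $\rho_N M$ is the delicate point of the proof.
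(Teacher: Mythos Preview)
Your overall strategy---integrate the pointwise inequality against a nonnegative weight, pass to the limit, then localize---is sound, but the passage to the limit on the right-hand side fails as written. You propose to approximate the discontinuous weight $\rho_N M \in L_\infty$ by continuous $f_j$ in $L_1$ and control the remainder via the uniform $L_1$-bound on the Jacobians. But the error term
\[
\left| \int_\Omega (\rho_N M - f_j)\, J(\cdot,\varphi_k) \, dx \right|
\]
cannot be made small uniformly in $k$ with that pairing: a bound $\|\rho_N M - f_j\|_{L_1} \sup_k \|J(\cdot,\varphi_k)\|_{L_\infty}$ is unavailable since the Jacobians are not bounded in $L_\infty$, while a bound $\|\rho_N M - f_j\|_{L_\infty} \sup_k \|J(\cdot,\varphi_k)\|_{L_1}$ is useless because a discontinuous $L_\infty$ function cannot be approximated uniformly by continuous ones. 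Under the hypotheses of the lemma the Jacobians are only bounded in $L_1$, hence merely weak-$*$ precompact as measures; concentration can occur, so testing against a merely bounded measurable weight is illegitimate. (Your appeal to Lemma~\ref{lem:AdjJ} with $m=n$ is also off: that lemma \emph{identifies} the weak $L_{n/m}$ limit assuming it exists---it does not furnish weak $L_1$ convergence of the Jacobians.)

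The paper sidesteps this by a H\"older split \emph{before} passing to the limit. Writing $|D\varphi_k|^{ns/(s+1)} = K_{\varphi_k,n}^{\,ns/(s+1)}\, J(\cdot,\varphi_k)^{s/(s+1)}$ and applying H\"older with exponents $s+1$ and $(s+1)/s$ against a continuous $\theta \in C_0^\infty(\Omega)$, $\theta\ge 0$, gives
\[
\int_\Omega |D\varphi_k|^{\frac{ns}{s+1}} \theta \, dx
\le \left( \int_\Omega M^s\, \theta \, dx \right)^{\frac{1}{s+1}}
\left( \int_\Omega J(\cdot,\varphi_k)\, \theta \, dx \right)^{\frac{s}{s+1}}.
\]
Now the $M$-factor is independent of $k$, and the Jacobian is integrated against a \emph{continuous} compactly supported weight, so Theorem~\ref{thm:Resh} applies directly to the second factor. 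Weak lower semicontinuity handles the left side (the exponent $ns/(s+1)<n$ is harmless), and Lebesgue differentiation over shrinking balls then yields the pointwise bound $|D\varphi_0|^{ns/(s+1)} \le M^{s/(s+1)} J(\cdot,\varphi_0)^{s/(s+1)}$ a.e., which is the claim. The idea you are missing is precisely this separation of $M$ from $J$ via H\"older, so that the Jacobian is never tested against a discontinuous weight.
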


\begin{proof}
Take a~test function
$\theta \in C_0^{\infty}(\Omega)$.
H\"older's inequality yields
\begin{multline*}
	\int\limits_{\Omega} |D\varphi_j (x)|^{\frac{ns}{s+1}} \theta(x) \,dx =
	\int\limits_{\Omega} \frac{ |D\varphi_j (x)|^{\frac{ns}{s+1}}}
	{J(x, \varphi_j)^{\frac{s}{s+1}}} J(x, \varphi_j)^{\frac{s}{s+1}} \theta(x) \,dx
	\\ \leq
	\Biggl( \int\limits_{\Omega} \frac{ |D\varphi_j (x)|^{\frac{ns}{s+1} (s+1)}}
	{J(x, \varphi_j)^{\frac{s}{s+1} (s+1)}} \theta(x)^{\frac{1}{s+1} (s+1)} \,dx\Biggr)
	^{\frac{1}{s+1}}
	\\ \times 
	\Biggl( \int\limits_{\Omega} J(x, \varphi_j)^{\frac{s}{s+1} \frac{s+1}{s}} 
	\theta(x)^{\frac{s}{s+1}\frac{s+1}{s}} \,dx \Biggr)^{\frac{s}{s+1}}\\ \leq
	\Biggl( \int\limits_{\Omega} K_{\varphi_j,n}^{ns}(x)  \theta(x) \,dx \Biggr)
	^{\frac{1}{s+1}}
	\Biggl( \int\limits_{\Omega} J(x, \varphi_j) \theta(x)\,dx \Biggr)^{\frac{s}{s+1}}
	\\ \leq 
	\Biggl( \int\limits_{\Omega} M^{s}(x)  \theta(x) \,dx \Biggr)^{\frac{1}{s+1}}
	\Biggl( \int\limits_{\Omega} J(x, \varphi_j) \theta(x)\,dx \Biggr)^{\frac{s}{s+1}}.
\end{multline*}
By lower semicontinuity,
\cite[Ch. 3,~\textsection~\,3]{Resh1982}, 
we can estimate the left-hand side: 
\begin{equation*}
	\int\limits_{\Omega} |D\varphi_0 (x)|^{\frac{ns}{s+1}} \theta(x) \,dx 
	\leq
	\varliminf\limits_{j\rightarrow \infty}\int\limits_{\Omega} 
	|D\varphi_j (x)|^{\frac{ns}{s+1}} \theta(x) \,dx.
\end{equation*}
On the other hand,
Theorem~{\ref{thm:Resh}} yields
\begin{equation*}
	\int\limits_{\Omega} J(x, \varphi_j) \theta(x)\,dx \xrightarrow[j \rightarrow \infty]{}  
	\int\limits_{\Omega} J(x, \varphi_0) \theta(x)\,dx
\end{equation*}
for every function 
$\theta \in C_0(\Omega)$.
Finally,
we obtain the inequality
\begin{multline}\label{neq:semicont}
	\int\limits_{\Omega} |D\varphi_0 (x)|^{\frac{ns}{s+1}} \theta(x) \,dx
	\\ \leq
	\Biggl( \int\limits_{\Omega} M^{s}(x)  \theta(x) \,dx \Biggr)^{\frac{1}{s+1}} 
	\Biggl( \int\limits_{\Omega} J(x, \varphi_0) \theta(x)\,dx \Biggr)^{\frac{s}{s+1}}.
\end{multline}

Consider the family of functions
$\theta_{r,\varepsilon,y} \in C_0^{\infty}(\Omega)$:	
\begin{equation*}%\label{eq:theta}
	\theta_{r,\varepsilon,x_0}(x) = 
	\begin{cases}
		1 \quad & \text{if } x \in B(x_0,r),
		\\
		0 \quad & \text{if } x \not\in B(x_0,r+\varepsilon),
		\\
		0<\theta_{r,\varepsilon,x_0}(x)< 1 \quad& \text{otherwise}.
	\end{cases}
\end{equation*}

And insert these functions into %the inequality
\eqref{neq:semicont}
in place of
$\theta(x)$.
Passing to the limit as
$\varepsilon \rightarrow 0$,
we obtain
\begin{multline*}
	\int\limits_{B(y,r)} |D\varphi_0 (x)|^{\frac{ns}{s+1}} \, dx = 
	\lim\limits_{\varepsilon \rightarrow 0}\int\limits_{\Omega} 
	|D\varphi_0 (x)|^{\frac{ns}{s+1}} \theta_{r,\varepsilon, y}(x) \,dx
	\\ \leq 
	\Biggl( \lim\limits_{\varepsilon \rightarrow 0} \int\limits_{\Omega} 
	M^{s}(x)  \theta_{r,\varepsilon, y}(x) \,dx \Biggr)^{\frac{1}{s+1}}
	\Biggl( \lim\limits_{\varepsilon \rightarrow 0} \int\limits_{\Omega} 
	J(x, \varphi_0) \theta_{r,\varepsilon, y}(x)\,dx \Biggr)^{\frac{s}{s+1}}
	\\ = 
	\Biggl( \int\limits_{B(y,r)} M^{s}(x) \,dx \Biggr)^{\frac{1}{s+1}}
	\Biggl(\int\limits_{B(y,r)} J(x, \varphi_0) \,dx \Biggr)^{\frac{s}{s+1}}.
\end{multline*}

Then,
divide by the measure of the ball
and pass to the limit as
$r \rightarrow 0$:
\begin{multline*}
	|D\varphi_0 (y)|^{\frac{ns}{s+1}} =
	\lim\limits_{r \rightarrow 0} \frac{1}{|B(y,r)|}
	\int\limits_{B(y,r)} |D\varphi_0 (x)|^{\frac{ns}{s+1}} \, dx  
	\\ \leq
	\Biggl( \lim\limits_{r \rightarrow 0} \frac{1}{|B(y,r)|} 
	\int\limits_{B(y,r)} M^{s}(x) \,dx \Biggr)^{\frac{1}{s+1}}
	\\ \times
	\Biggl(\lim\limits_{r \rightarrow 0} \frac{1}{|B(y,r)|}
	\int\limits_{B(y,r)} J(x, \varphi_0) \,dx \Biggr)^{\frac{s}{s+1}}
	\\=
	M^{s}(y)^{\frac{1}{s+1}} J(y, \varphi_0)^{\frac{s}{s+1}}
\end{multline*}
for almost all
$y\in \Omega$.

Thus,
the pointwise inequality 
\begin{equation*}
	|D\varphi_0 (x)|^{\frac{ns}{s+1}} \leq M^{s}(x)^{\frac{1}{s+1}}
	J(x, \varphi_0)^{\frac{s}{s+1}}  
	\text{ for almost all } x \in \Omega
\end{equation*}
holds.
It implies the assertion of the lemma:
\begin{equation*}
	\frac{ |D\varphi_0 (x)|^n}{J(x, \varphi_0)} \leq M(x)  
	\text{ for almost all } x \in \Omega.
\end{equation*}
\end{proof}

\subsubsection{Semicontinuity of the functional.}\label{subsec:semicont}

In order to complete the proof,
it remains to verify that
\begin{equation}\label{l.s.c.}
	\int\limits_\Omega W(x, D\varphi_0)\,dx\leq \varliminf\limits_{k\rightarrow\infty} 
	\int\limits_\Omega W(x,D\varphi_k)\,dx.
\end{equation}

If the right-hand side equals~%
$\infty$
then the inequality is obvious.
If the~right-hand side is finite
then there exists a~subsequence
$\varphi_m$
for which the~sequence
$\biggl\{\int\limits_\Omega W(x, D\varphi_m)\,dx\biggr\}$
converges.

Using the weak convergence~\eqref{weak_lim} and Mazur theorem,
we see that
for each~%
$m$
we can find an~integer
$j(m)\geq m$
and real numbers
$\mu_m^t>0$
for
$m\leq t\leq j(m)$
such that
\begin{equation*}
	\sum\limits_{t=m}\limits^{j(m)}\mu_m^t=1,
\end{equation*}
and in
$L_n(\Omega)\times L_{\frac{n}{n-1}}(\Omega) \times L_r(\Omega)$
we have
\begin{equation*}
	D_m=\sum\limits_{t=m}\limits^{j(m)}\mu_m^t(D\varphi_t,\Adj
	{D\varphi_t}, J(\varphi_t)) \xrightarrow[m \rightarrow \infty]{} 
	(D\varphi_0,\Adj D\varphi_0,J(\cdot,\varphi_0)).
\end{equation*}
Consequently,
there exists a~subsequence
$\{D_l\}$
converging almost everywhere in~%
$\Omega$.

Since
$G$
satisfies Carath\'eodory conditions,
it follows that
$G(x,\cdot)$
is continuous for almost all
$x\in\Omega$
and
\begin{multline*}
	W(x, D\varphi_0)=G(x,D\varphi_0,\Adj{D\varphi_0},J(\cdot,\varphi_0))
	\\ = 
	\lim\limits_{l\rightarrow\infty}G\bigg(x,\sum\limits_{t=l}\limits^{j(l)}\mu_l^t
	(D\varphi_t, \Adj{D\varphi_t}, J(\cdot,\varphi_t))\bigg).
\end{multline*}

Applying Fatou's lemma and the convexity of~% the function
$G$,
we arrive at
\begin{multline*}
	\int\limits_\Omega W(x, D\varphi_0)\,dx \leq \varliminf\limits_{l\rightarrow\infty}
	\int\limits_\Omega G\bigg(x,\sum\limits_{t=l}\limits^{j(l)}\mu_l^t(D\varphi_t,
	\Adj{D\varphi_t}, J(\cdot,\varphi_t))\bigg)\,dx 
	\\ \leq
	\varliminf\limits_{l\rightarrow\infty}\sum\limits_{t=l}\limits^{j(l)}\mu_l^t 
	\int\limits_\Omega  G(x,D\varphi_t,\Adj{D\varphi_t}, J(\cdot,\varphi_t))\,dx 
	= \lim\limits_{l\rightarrow\infty}W(x,D\varphi_l).
\end{multline*}
This justifies %the inequality
\eqref{l.s.c.}.

\section{Examples}\label{sec:examples}

As our first example
consider an~Ogden material
with the stored-energy function
$W_1$
of the form
\begin{equation}\label{example1}
	W_1(F)=a\tr(F^T F)^{\frac{p}{2}} + b \tr\Adj (F^T F)^{\frac{q}{2}} 
	+ c(\det F)^r + d (\det F)^{-s},
\end{equation}
where
$a>0$,
$b>0$,
$c>0$,
$d>0$,
$p> 3$,
$q >3$,
$r>1$,
and
$s>\frac{2q}{q-3}$.
Then
$W_1(F)$
is polyconvex
and the coercivity inequality holds
\cite[Theorem 4.9-2]{Ciarlet1992}:
\begin{equation*}
	W_1(F)\geq \alpha\big(\|F\|^p+\|\Adj F\|^q\big)+ c(\det F)^r + d (\det F)^{-s}.
\end{equation*}
Assume also that
boundary conditions on the displacements are specified,
and furthermore,
$\overline{\varphi}: \overline{\Omega} \to \overline{\Omega'}$
is a~homeomorphism.
We have to solve the~minimization problem 
\begin{equation*}
	I_1(\varphi_b)=\inf\limits_{\varphi\in\A_B} I_1(\varphi),
\end{equation*}
where
$I_1(\varphi) = \int\limits_{\Omega} W_1 (D\varphi (x)) \, dx$
and the class of admissible deformations
$\A_B$
is defined in~\eqref{def:AB}.
The result of John Ball~\cite{Ball1981} ensures that
there exists at least one solution
$\varphi_B\in\A_B$
to this problem,
which in addition is a~homeomorphism.

On the other hand,
for the functions of the form~\eqref{example1}
Theorem~\ref{thm:main} holds.
Indeed,
$W_1(F)$
is polyconvex and satisfies %the inequality
\begin{equation*}
W_1(F)\geq \alpha \|F\|^n + c(\det F)^r - \alpha,
\end{equation*}
where~%
$\alpha$
plays the role of the function
$h(x)$
of~\eqref{neq:coer}.
When we consider the~same boundary conditions 
$\overline{\varphi}: \overline{\Omega} \to \overline{\Omega'}$
and have to solve the minimization problem 
\begin{equation*}
	I_1(\varphi_0)=\inf\limits_{\varphi\in\A} I_1(\varphi)
\end{equation*}
in the class of admissible deformations~%
$\A$
defined by~\eqref{def:A},
Theorem~\ref{thm:main} yields a~solution
$\varphi_0\in\A$
which is a~homeomorphism.

This example shows that
in those problems to which Ball's theorem applies
the hypotheses of Theorem~\ref{thm:main} are fulfilled;
consequently,
we can consider admissible deformations of class~%
$\A$.

Let us discuss another example.
Here the stored-energy function is of the~form
\begin{equation*}%\label{example2}
	W_2(F)=a \, {\tr} (F^T F)^{\frac{3}{2}}+ c (\det F)^{r}.
\end{equation*}
This function is polyconvex and satisfies %the inequality
\begin{equation*}
	W_2(F)\geq \alpha \|F\|^3+ c (\det F)^r,
\end{equation*}
but violates the
 inequality of the form~\eqref{neq:coer_b}.
Moreover,
$W_2(F)$
violates the~asymptotic condition
\begin{equation*}
	W_2(x,F)\longrightarrow\infty
	\text{ as } \det F \longrightarrow 0_+,
\end{equation*}
which plays an~important role in~\cite{Ball1981, BallCurOl1981}
and other articles.

Nevertheless,
for the stored-energy function
$W_2$
there exists a~solution to the minimization problem 
$	
I_2(\varphi_0)=\inf\limits_{\varphi\in\A} I_2(\varphi)
$,
where
$I_2 (\varphi) = \int\limits_{\Omega} W_2 (D\varphi (x)) \, dx$,
which is a~homeomorphism 
provided that
a~homeomorphism~%
$\overline{\varphi}$
is prescribed on the boundary.

\end{document}